\theoremstyle{definition}
\newtheorem{thm}{Theorem}[section]
\newtheorem{Def}[thm]{Definition}
\newtheorem{pro}[thm]{Proposition}
\newtheorem{cor}[thm]{Corollary}
\newtheorem{lem}[thm]{Lemma}
\newtheorem{rem}[thm]{Remark}
\newtheorem*{mainthm}{Theorem}
\theoremstyle{definition}
\begin{document}
\title{Rohlin actions of finite groups on the Razak-Jacelon algebra}
\author{Norio Nawata}
\address{Department of Educational Collaboration, Osaka Kyoiku University, 4-698-1 Asahigaoka, Kashiwara, Osaka, 582-8582, Japan}
\email{nawata@cc.osaka-kyoiku.ac.jp}
\keywords{Stably projectionless C$^*$-algebra; Rohlin property; Kirchberg-Phillips type theorem}
\subjclass[2010]{Primary 46L55, Secondary 46L35; 46L40}
\thanks{This work was supported by JSPS KAKENHI Grant Number 16K17614}

\begin{abstract}
Let $A$ be a simple separable nuclear C$^*$-algebra with a unique tracial state and no unbounded 
traces, and let $\alpha$ be a strongly outer action of a finite group $G$ on $A$. In this paper, we show that $\alpha\otimes \mathrm{id}$ on $A\otimes\mathcal{W}$ has the Rohlin property, where 
$\mathcal{W}$ is the Razak-Jacelon algebra. 
Combing  this result with the recent classification results and our previous result, we see that such 
actions are unique up to conjugacy. 
\end{abstract}
\maketitle

\section{Introduction} 

Let $\mathcal{O}_2$ be the Cuntz algebra generated by 2 isometries. 
It is known that $\mathcal{O}_2$ is a simple separable unital nuclear purely infinite  
C$^*$-algebra, and is $KK$-equivalent to $\{0\}$. 
Kirchberg and Phillips showed that a simple separable unital nuclear C$^*$-algebra $B$ is 
isomorphic to $\mathcal{O}_2$ if and only if  $B$ has an asymptotically central inclusion of 
$\mathcal{O}_2$ in \cite{KP}. In particular, if $A$ is a simple separable unital nuclear C$^*$-algebra, 
then $A\otimes\mathcal{O}_2$ is isomorphic to $\mathcal{O}_2$. 
It is known that $\mathcal{O}_2$ plays an important role in the classification of nuclear 
C$^*$-algebra (see, for example, \cite{G2} and \cite{Ror1}). 

Let $\mathcal{W}$ be the Razak-Jacelon algebra studied in \cite{J}, which is a  certain simple 
separable nuclear stably projectionless C$^*$-algebra having trivial $K$-groups and a unique 
tracial state and no unbounded traces. Note that $\mathcal{W}$ is $KK$-equivalent to $\{0\}$ 
and $\mathcal{O}_2$. 
Hence we may regard $\mathcal{W}$ as a stably finite analogue of $\mathcal{O}_2$. 
Combing Elliott, Gong, Lin and Niu's result \cite{EGLN} and  Castillejos and Evington's result 
\cite{CE} (see also \cite{CETWW}), we see that if $A$ is a simple separable nuclear 
C$^*$-algebra with a unique tracial state and no unbounded traces, then $A\otimes\mathcal{W}$ 
is isomorphic to $\mathcal{W}$. 
We refer the reader to \cite{EGLN0}, \cite{EGLN} (see also \cite{EN} and \cite{GL}) and \cite{GL2} 
for recent progress in the classification of stably projectionless C$^*$-algebras. 

In the theory of operator algebras, the classification of group actions is one of the most 
fundamental problems and has a long history. There exists a complete classification 
of actions of countable amenable groups on approximately finite dimensional (AFD) factors. 
Although there exist some successes in the classification of  group actions on ``classifiable'' 
C$^*$-algebras, the classification of countable amenable group (outer) actions on ``classifiable'' 
C$^*$-algebras  is far from complete because of $K$-theoretical obstructions. 
We refer the reader to \cite{I} and the references given there for details and results in 
the classification of group actions on operator algebras. 
We shall review only some results that are directly related to this paper. 

Connes \cite{C3} classified finite cyclic group actions on the AFD factor 
$\mathcal{R}_0$ of type II$_1$ up to conjugacy. More generally, Jones \cite{Jones} classified 
finite group actions on $\mathcal{R}_0$. 
In particular, outer actions of a finite group on $\mathcal{R}_0$ are unique up to conjugacy. 

In \cite{I1}, Izumi introduced the Rohlin property of finite group actions on unital C$^*$-algebras and 
showed an equivariant version of the Kirchberg-Phillips type theorem for finite group actions 
on $\mathcal{O}_2$. 
Indeed, he characterized Rohlin actions on $\mathcal{O}_2$ by using the fixed point subalgebra 
of the central sequence C$^*$-algebra of $\mathcal{O}_2$ and showed that if $\alpha$ is an 
outer action of a finite group $G$ on a simple separable unital nuclear C$^*$-algebra $A$, then 
$\alpha\otimes\mathrm{id}$ on $A\otimes\mathcal{O}_2$ has the Rohlin property. 
In particular, such actions are unique up to conjugacy. Note that Izumi also showed that there exist 
uncountably many mutually non-conjugate outer actions of $\mathbb{Z}_2$ on $\mathcal{O}_2$. 
Also, Goldstein and Izumi obtained an equivariant Kirchberg-Phillips type result for finite group 
actions on $\mathcal{O}_{\infty}$ in \cite{GI}. 

Remarkably, Szab\'o generalized Izumi's result to countable amenable group actions in \cite{Sza4}. 
He showed that countable amenable group outer actions on $\mathcal{O}_2$ that equivariantly 
absorb the trivial action on $\mathcal{O}_2$ are unique up to strong cocycle conjugacy. 
Note that  Szab\'o considered more general settings and obtained results for  strongly 
self-absorbing C$^*$-dynamical systems. See \cite{Sza3}, \cite{Sza1}, \cite{Sza2}, \cite{Sza4}, 
\cite{Sza5} and \cite{Sza6}. 

In this paper, we shall consider an equivariant Kirchberg-Phillips type result for finite group actions on 
$\mathcal{W}$. Indeed, we shall show that if $\alpha$ is a strongly outer action of a finite group 
$G$ on a simple separable nuclear C$^*$-algebra $A$ with a unique tracial state and no unbounded 
traces, then $\alpha\otimes\mathrm{id}$ on $A\otimes\mathcal{W}$ has the Rohlin property 
(Theorem \ref{thm:main}). 
Since the author showed that Rohlin actions of a finite group on $\mathcal{W}$ are unique up 
to conjugacy in \cite{Na4}, we see that such actions are unique up to conjugacy by Elliott, Gong, 
Lin and Niu's result and Castillejos and Evington's result. Indeed, we obtain the following theorem.

\begin{mainthm} (Corollary \ref{cor:main}) \ \\
Let $A$ and $B$ be simple separable nuclear C$^*$-algebras with a unique tracial state and 
no unbounded traces, and let $\alpha$ and $\beta$ be strongly outer actions of a finite group 
$G$ on $A$ and $B$, respectively. 
Then $\alpha\otimes\mathrm{id}$ on $A\otimes\mathcal{W}$ is conjugate to 
$\beta\otimes\mathrm{id}$ on $B\otimes\mathcal{W}$. 
\end{mainthm}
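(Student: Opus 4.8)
The plan is to assemble the three ingredients that the introduction has already put in place: Theorem \ref{thm:main}, the isomorphism $A\otimes\mathcal{W}\cong\mathcal{W}\cong B\otimes\mathcal{W}$ furnished by the classification theorem of Elliott, Gong, Lin and Niu \cite{EGLN} together with Castillejos and Evington \cite{CE}, and the author's uniqueness result for Rohlin actions on $\mathcal{W}$ from \cite{Na4}. Since the hypotheses imposed on $A$, $B$, $\alpha$ and $\beta$ are exactly those of Theorem \ref{thm:main}, essentially all of the analytic content is already carried by that theorem; the corollary is then a matter of transporting the two product actions onto the model algebra $\mathcal{W}$ and invoking uniqueness there.

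First I would apply Theorem \ref{thm:main} twice: to $(A,\alpha)$ to conclude that $\alpha\otimes\mathrm{id}$ on $A\otimes\mathcal{W}$ has the Rohlin property, and to $(B,\beta)$ to conclude the same for $\beta\otimes\mathrm{id}$ on $B\otimes\mathcal{W}$. Next, because $A$ and $B$ are simple separable nuclear with a unique tracial state and no unbounded traces, the combined result of \cite{EGLN} and \cite{CE} provides isomorphisms $\theta_A\colon A\otimes\mathcal{W}\to\mathcal{W}$ and $\theta_B\colon B\otimes\mathcal{W}\to\mathcal{W}$. I would then transport the product actions through these isomorphisms, setting $\tilde{\alpha}_g=\theta_A(\alpha_g\otimes\mathrm{id})\theta_A^{-1}$ and $\tilde{\beta}_g=\theta_B(\beta_g\otimes\mathrm{id})\theta_B^{-1}$ for $g\in G$. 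Since the Rohlin property is formulated in terms of the fixed point subalgebra of the central sequence C$^*$-algebra, it is invariant under conjugacy by an isomorphism, so both $\tilde{\alpha}$ and $\tilde{\beta}$ are genuine Rohlin actions of $G$ on $\mathcal{W}$.

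At this stage the uniqueness theorem of \cite{Na4} applies directly: any two Rohlin actions of the finite group $G$ on $\mathcal{W}$ are conjugate, so there is an automorphism $\psi$ of $\mathcal{W}$ with $\psi\circ\tilde{\alpha}_g=\tilde{\beta}_g\circ\psi$ for all $g\in G$. I would then exhibit $\Phi:=\theta_B^{-1}\circ\psi\circ\theta_A\colon A\otimes\mathcal{W}\to B\otimes\mathcal{W}$ as the desired conjugating isomorphism. Using $\tilde{\alpha}_g\theta_A=\theta_A(\alpha_g\otimes\mathrm{id})$ and $\theta_B^{-1}\tilde{\beta}_g=(\beta_g\otimes\mathrm{id})\theta_B^{-1}$ one computes $\Phi\circ(\alpha_g\otimes\mathrm{id})=\theta_B^{-1}\psi\tilde{\alpha}_g\theta_A=\theta_B^{-1}\tilde{\beta}_g\psi\theta_A=(\beta_g\otimes\mathrm{id})\circ\Phi$, which is exactly the asserted conjugacy of $\alpha\otimes\mathrm{id}$ with $\beta\otimes\mathrm{id}$.

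The only point requiring genuine care in this assembly is the conjugacy-invariance of the Rohlin property used in the second step, which is what legitimizes applying \cite{Na4} to $\tilde{\alpha}$ and $\tilde{\beta}$ rather than to the actions living on $A\otimes\mathcal{W}$ and $B\otimes\mathcal{W}$. I expect no real obstacle in the corollary itself; the substantive difficulty of the paper lies entirely in establishing Theorem \ref{thm:main}, namely in verifying the Rohlin property for $\alpha\otimes\mathrm{id}$, where the strong outerness of $\alpha$ and the fine structure of $\mathcal{W}$ must be exploited.
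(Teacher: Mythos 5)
Your proposal is correct and follows essentially the same route as the paper: Theorem \ref{thm:main} applied to both actions, the isomorphisms $A\otimes\mathcal{W}\cong\mathcal{W}\cong B\otimes\mathcal{W}$ from \cite{CE} and \cite{EGLN}, and the uniqueness of Rohlin actions on $\mathcal{W}$ from Theorem \ref{thm:classification}. The paper's proof merely states these three ingredients without writing out the transport of the actions or the conjugacy-invariance of the Rohlin property, which you verify explicitly; this is a matter of exposition, not of substance.
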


Our main result (Theorem \ref{thm:main}) is shown by using a cohomolgy vanishing type  result 
(Lemma \ref{lem:cohomology}). 
The proof of Lemma \ref{lem:cohomology} is based on Connes' $2\times 2$ matrix trick in 
\cite[Corollary 2.6]{C3}. 
We need to consider the comparison theory for projections in the fixed point subalgebra 
$F(A\otimes\mathcal{W})^{\alpha\otimes\mathrm{id}}$ of 
the central sequence C$^*$-algebra of $A\otimes\mathcal{W}$ for Connes' $2\times 2$ matrix trick. 
We obtain this as a corollary of a classification up to unitary equivalence of certain normal 
elements in $F(A\otimes\mathcal{W})^{\alpha\otimes\mathrm{id}}$. This classification is 
based on arguments in \cite{Na3} where the author classified certain unitary elements and 
projections in $F(\mathcal{W})$ up to unitary equivalence. 

This paper is organized as follows. In Section \ref{sec:Pre}, we collect notations, definitions and 
some results. In Section \ref{sec:target} and Section \ref{sec:stable-uniqueness}, we show 
a variant of \cite[Corollary 3.8]{Na3}, which is a main technical tool in this paper.  
In particular, we introduce a (non-separable) C$^*$-algebra $\mathcal{B}^{\gamma}$, and show 
that $\mathcal{B}^{\gamma}$ has strict comparison (Proposition \ref{pro:main-section3}) 
in Section \ref{sec:target}. 
Note that $\mathcal{B}^{\gamma}$ is a target algebra of a (natural) homomorphism from 
$F(A\otimes\mathcal{W})^{\alpha\otimes\mathrm{id}}\otimes\mathcal{W}$. 
The proof of Proposition \ref{pro:main-section3} is essentially based 
on arguments in \cite{MS}, \cite{MS2} and \cite{MS3}. In particular, it is important to consider 
the property (SI) and the weak Rohlin property. 
These concepts were introduced by Sato in his pioneering work \cite{Sa0} 
and \cite{Sa}  (see also \cite{Kis1}). We refer the reader to \cite{Sa1} for 
recent progress of such type arguments. 
Section \ref{sec:stable-uniqueness} is essentially based on arguments in 
\cite{EN} (see \cite[Section 3]{Na3}). 
In Section \ref{sec:normal}, we classify certain normal elements in 
$F(A\otimes\mathcal{W})^{\alpha\otimes\mathrm{id}}$ up to unitary equivalence (Theorem 
\ref{thm:classification-normal}), and show a comparison theorem for certain projections in 
$F(A\otimes\mathcal{W})^{\alpha\otimes\mathrm{id}}$ (Corollary \ref{cor:comparison}). 
In Section \ref{sec:main}, we show the main result in this paper. 

\section{Preliminaries}\label{sec:Pre}

In this section we shall collect notations, definitions and some results. 

For a C$^*$-algebra $A$, let $A_{+}$ denote the set of positive elements in $A$ and $A_{+,1}$ 
the set of positive contractions in $A$. For $x,y\in A$, let $[x,y]$ be the commutator $xy-yx$. 
We denote by $K(H)$ and $M_{n^\infty}$ for $n\in\mathbb{N}$ 
the C$^*$-algebra of compact operators on a Hilbert space $H$ 
and the uniformly hyperfinite (UHF) algebra of type $n^{\infty}$, respectively.

\subsection{Approximate units and actions} 
 If $A$ is a separable C$^*$-algebra, then there exists a positive element $s\in A$ such that 
$sA$ is dense in $A$. Such a positive element $s$ is said to be {\it strictly positive} in $A$. 
For any $n\in\mathbb{N}$, define $f_n:[0,1]\to \mathbb{R}$ by 
$$
f_n(t):=\left\{\begin{array}{cl}
0 & t\in [0,\frac{1}{n+1}]  \\
n(n+1)t-n & t\in (\frac{1}{n+1},\frac{1}{n}] \\
1 & t\in (\frac{1}{n}, 1]
\end{array}
\right..
$$
If $s$ is a strictly positive element in $A$ and $\|s\|=1$, then $\{f_n(s)\}_{n\in\mathbb{N}}$ is 
an approximate unit for $A$ with $f_{n+1}(s)f_{n}(s)=f_{n}(s)$.  
Let $A^{\sim}$ denote the unitization algebra of $A$. Note that we assume $A^{\sim}=A$ if 
$A$ is unital. 
Let $M(A)$ be the \textit{multiplier algebra} of $A$, which is the largest unital C$^*$-algebra 
that contains $A$ as an essential ideal. 
If $\alpha$ is an automorphism of $A$, then $\alpha$ extends uniquely to an automorphism of 
$M(A)$. We denote it by the same symbol $\alpha$ for simplicity. 

We denote by $\mathrm{Aut}(A)$ the automorphism group of $A$. 
An automorphism $\alpha$ of $A$ is said to be \textit{inner} if there exists a unitary element $u$ in 
$M(A)$ such that $\alpha (x)=\mathrm{Ad}(u)(x)=uxu^*$ for any $x\in A$. 
For a subset $F$ of $A$ and $\varepsilon >0$, we say a completely positive (c.p.) map 
$\varphi :A\to B$ is \textit{$(F,\varepsilon)$-multiplicative} if 
$$
\| \varphi (xy) - \varphi (x)\varphi(y) \| < \varepsilon 
$$
for any $x,y\in F$. 

An \textit{action} $\alpha$ of a discrete group $G$ on $A$ is a homomorphism from $G$ to 
$\mathrm{Aut}(A)$. We say that $\alpha$ is \textit{outer} if $\alpha_g$ is not inner 
for any $g\in G\setminus \{\iota \}$ where $\iota$ is the identity of $G$. 
An $\alpha$-\textit{cocycle} is a map $w$ from $G$ to the unitary group of $M(A)$ such that 
$w(gh)=w(g)\alpha_{g}(w(h))$ for any $g,h\in G$. 
We say that an $\alpha$-cocycle $w$ is a \textit{coboundary} if there exists a unitary element $v$ 
in $M(A)$ such that $w(g)=v\alpha_g(v^*)$ for any $g\in G$. 
For two $G$-actions $\alpha$ on $A$ and $\beta$ on $B$, we say that $\alpha$ and $\beta$ are 
\textit{conjugate} if there exists an isomorphism $\theta$ from $A$ onto $B$ such that 
$\theta \circ \alpha_g=\beta_{g}\circ \theta$ for any $g\in G$. 
We denote by $A^{\alpha}$ the fixed point algebra. 

Every tracial state $\tau$ on $A$ extends uniquely to a tracial state on $M(A)$.  
We denote it by the same symbol $\tau$ for simplicity. 
Let $(\pi_{\tau}, H_{\tau})$ be the Gelfand-Naimark-Segal (GNS) representation of $A$ associated 
with $\tau$. 
Then $\tau$ extends uniquely to a normal tracial state $\tilde{\tau}$ on $\pi_{\tau} (A)^{''}$. 
If $\alpha$ is an automorphism of $A$ such that $\tau \circ \alpha =\tau$, 
then $\alpha$ extends uniquely to an automorphism $\tilde{\alpha}$ of $\pi_{\tau} (A)^{''}$. 
Moreover if $\alpha$ is an action of $G$ on $A$ such that  $\tau \circ \alpha_g =\tau$ for any 
$g\in G$, then 
$\alpha$ extends uniquely to a von Neumann algebraic action $\tilde{\alpha}$ on $\pi_{\tau}(A)^{''}$. 
We say that an action $\alpha$ of $G$ on a C$^*$-algebra $A$ with a unique tracial state $\tau$ 
is \textit{strongly outer} if $\tilde{\alpha}_g$ is not inner in $\pi_{\tau}(A)^{''}$ for any 
$g\in G\setminus \{\iota\}$.

\subsection{Kirchberg's central sequence C$^*$-algebras}
We shall recall Kirchberg's central sequence C$^*$-algebras in \cite{Kir2} 
(see also \cite[Section 5]{Na2} and \cite[Section 2.2]{Na3}). 
Fix a free ultrafilter $\omega$ on $\mathbb{N}$. For a C$^*$-algebra $A$, put 
$$
c_{\omega}(A):=\{\{x_n\}_{n\in\mathbb{N}}\in \ell^{\infty}(\mathbb{N}, A)\; 
|\; \lim_{n \to \omega}\| x_n\| =0 \}, \; 
A^{\omega}:=\ell^{\infty}(\mathbb{N}, A)/c_{\omega}(A). 
$$
A sequence $(x_n)_n$ is a representative of an element in $A^{\omega}$. 
Let $B$ be a C$^*$-subalgebra of $A$. 
We identify $A$ and $B$ with the C$^*$-subalgebras of $A^\omega$ consisting of equivalence 
classes of constant sequences.  Set 
$$
A_{\omega}:=A^{\omega}\cap A^{\prime},\; \mathrm{Ann}(B,A^{\omega}):=\{(x_n)_n\in A^{\omega}\cap B^{\prime}\; |\; (x_n)_nb =0
\;\mathrm{for}\;\mathrm{any}\; b\in B \}.
$$
Then $\mathrm{Ann}(B,A^{\omega})$ is a closed ideal of $A^{\omega}\cap B^{\prime}$. 
Define a \textit{central sequence C$^*$-algebra} $F(A)$ of $A$ by 
$$
F(A):=A_{\omega}/\mathrm{Ann}(A,A^{\omega}).
$$ 
If $\{h_n\}_{n\in\mathbb{N}}$ is a countable approximate unit for $A$, then $[(h_n)_n]$ is a unit in $F(A)$.  
It can be easily checked that $F(A)$ is isomorphic to $M(A)^\omega\cap A^{\prime}/\mathrm{Ann}(A,M(A)^{\omega})$ and ${A^{\sim}}_{\omega}/ \mathrm{Ann}(A,(A^{\sim})^{\omega})$. 
If $\alpha$ is an automorphism of $A$, $\alpha$ induces natural automorphisms of $A^{\omega}$, $A_{\omega}$ and $F(A)$. 
We denote them by the same symbol $\alpha$ for simplicity. 
For a tracial state $\tau$ on $A$, define $\tau_{\omega}([(x_n)_n]):=\lim_{n\to\omega}\tau (x_n)$. 
Then $\tau_{\omega}$ is a well defined tracial state on $F(A)$ by \cite[Proposition 2.1]{Na3}.

\subsection{Razak-Jacelon algebra}

Let $\mathcal{W}$ be the Razak-Jacelon algebra studied in \cite{J}, 
which is a simple separable nuclear C$^*$-algebra with a unique tracial state and no unbounded 
traces, and is $KK$-equivalent to $\{0\}$. 
The Razak-Jacelon algebra $\mathcal{W}$ is constructed as an inductive limit 
C$^*$-algebra of Razak's building block in \cite{Raz}. 
Let $S_1$ and $S_2$ be the generators of  the Cuntz algebra $\mathcal{O}_2$. 
For every $\lambda_1,\lambda_2\in\mathbb{R}$, define a flow 
$\gamma$ on $\mathcal{O}_2$ by $\gamma_t (S_j)=e^{it\lambda_{j}}S_j$. 
Kishimoto and Kumjian showed that if 
$\lambda_{1}$ and $\lambda_{2}$ are all non-zero, of the same sign and 
$\lambda_1$ and $\lambda_2$ generate $\mathbb{R}$ as a closed subgroup, then 
$\mathcal{O}_2\rtimes_{\gamma}\mathbb{R}$ is a simple stably projectionless C$^*$-algebra 
with unique (up to scalar multiple) trace in \cite{KK1} and \cite{KK2}. 
Robert showed that $\mathcal{W}\otimes K(\ell^2(\mathbb{N}))$ is isomorphic to 
$\mathcal{O}_2\rtimes_{\gamma}\mathbb{R}$ for some $\lambda_1$ and $\lambda_2$ in \cite{Rob}. 
(See also \cite{Dean}.) 
Razak's classification theorem \cite{Raz} implies that $\mathcal{W}$ is UHF-stable, and hence 
$\mathcal{W}$ is $\mathcal{Z}$-stable. 

\subsection{Corollaries of Matui and Sato's results}

We shall collect some corollaries of  Matui and Sato's results in \cite{MS1} and \cite{MS2}. 
Although they assume that C$^*$-algebras are unital, 
their arguments for the following results work for non-unital C$^*$-algebras by suitable modifications 
(see \cite{Na2} and \cite{Na3}). 

First, we recall the definition of the weak Rohlin property. See \cite[Definition 2.7]{MS1} and  
\cite[Definition 2.5]{MS2}. Note that Matui and Sato define the weak Rohlin property for more 
general settings. 

\begin{Def} 
Let $A$ be a simple C$^*$-algebra with a unique tracial state $\tau$, and let $\alpha$ be an action of 
a finite group $G$ on $A$. We say that $\alpha$ has the weak Rohlin property if there exists a 
positive contraction $f$ in $F(A)$ such that 
$$
\alpha_g(f)\alpha_h(f)=0, \quad \tau_{\omega}(f)= \frac{1}{|G|} 
$$
for any $g,h\in G$ with $g\neq h$. 
\end{Def}

Essentially the same proof as \cite[Theorem 3.4]{MS1} shows the following theorem. 
See also the proof of \cite[Lemma 6.2]{Na3} and \cite[Theorem 3.6]{MS2}.  

\begin{thm}\label{thm}\label{thm:weak-rohlin}
Let $A$ be a simple separable nuclear C$^*$-algebra with a unique tracial state and no unbounded 
traces, and let $\alpha$ be an action of a finite group $G$ on $A$. Then $\alpha$ has the weak 
Rohlin property if and only if $\alpha$ is strongly outer. 
\end{thm}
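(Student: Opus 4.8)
The plan is to reduce everything to the enveloping von Neumann algebra and its central sequence algebra, where the statement becomes the classical correspondence between outerness and the Rohlin property for finite group actions on the hyperfinite factor. Write $M:=\pi_\tau(A)''$ for the weak closure in the GNS representation of the unique trace $\tau$. Since $\tau$ is the unique tracial state it is extremal, so $M$ is a factor; as $A$ is separable, nuclear and stably finite, $M$ is the AFD II$_1$ factor $\mathcal{R}_0$ with separable predual. The action extends to a trace-preserving action $\tilde\alpha$ on $M$ and on the tracial (von Neumann) ultrapower $M^\omega$, and strong outerness of $\alpha$ is by definition the outerness of $\tilde\alpha_g$ on $M$ for $g\neq\iota$. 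The bridge to $F(A)$ is the canonical $*$-homomorphism $\kappa\colon F(A)\to M_\omega:=M^\omega\cap M'$ induced by $\pi_\tau$: it is equivariant, $\tilde\alpha_g\circ\kappa=\kappa\circ\alpha_g$, and satisfies $\tilde\tau_\omega\circ\kappa=\tau_\omega$. The only non-formal input I would borrow from the Matui--Sato (and Kirchberg--R\o rdam) machinery is that, for the $\mathcal{Z}$-stable algebras under consideration (property (SI) / strict comparison), $\kappa$ is \emph{surjective} with kernel the trace-kernel ideal.

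For the implication ``weak Rohlin $\Rightarrow$ strongly outer'' I would argue by contraposition. Suppose some $\tilde\alpha_{g_0}$ with $g_0\neq\iota$ is inner, say $\tilde\alpha_{g_0}=\mathrm{Ad}(u)$ for a unitary $u\in M$. A constant unitary in $M$ commutes with every element of $M_\omega=M^\omega\cap M'$, so $\tilde\alpha_{g_0}$ acts as the identity on $M_\omega$. If $f\in F(A)_{+,1}$ were a weak Rohlin element, then $\bar f:=\kappa(f)$ would be a positive contraction in $M_\omega$ with $\tilde\tau_\omega(\bar f)=\tau_\omega(f)=\frac{1}{|G|}>0$, and applying $\kappa$ to $\alpha_{g_0}(f)f=0$ would give $\tilde\alpha_{g_0}(\bar f)\bar f=\bar f^2=0$. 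Faithfulness of $\tilde\tau_\omega$ forces $\bar f=0$, contradicting $\tilde\tau_\omega(\bar f)=\frac{1}{|G|}$. Hence $\alpha$ is strongly outer.

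For the converse I would first work entirely inside $M_\omega$. Since $\tilde\alpha$ is an outer action of the finite group $G$ on $\mathcal{R}_0$, the classical Rohlin theorem for such actions (going back to Connes and Jones) yields a Rohlin tower in the central sequence algebra: pairwise orthogonal projections $\{e_g\}_{g\in G}$ in $M_\omega$ with $\sum_{g}e_g=1$, $\tilde\alpha_g(e_h)=e_{gh}$, and $\tilde\tau_\omega(e_g)=\frac{1}{|G|}$. Put $e:=e_\iota$, so that $e\,\tilde\alpha_k(e)=0$ for $k\neq\iota$ and $\tilde\tau_\omega(e)=\frac{1}{|G|}$. Using surjectivity of $\kappa$, lift $e$ to a positive contraction $f_0\in F(A)$ with $\kappa(f_0)=e$; then $\tau_\omega(f_0)=\frac{1}{|G|}$ while $f_0\,\alpha_k(f_0)\in\ker\kappa$ for every $k\neq\iota$, i.e.\ the orthogonality holds only modulo the trace-kernel ideal.

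The main obstacle, and the only genuinely technical point, is to upgrade this approximate (trace-negligible) orthogonality to the \emph{exact} relations $f\,\alpha_k(f)=0$ demanded by the definition, while preserving $\tau_\omega(f)=\frac{1}{|G|}$. Here I would invoke Kirchberg's $\sigma$-ideal calculus for central sequence algebras together with property (SI): the trace-kernel ideal is a $\sigma$-ideal in $F(A)$, and the relations defining an orthogonal $G$-family of positive contractions are of projective type, so the equivariant system $(\tilde\alpha_g(e))_g$ lifts to an honestly orthogonal, $\alpha$-equivariant family $(\alpha_g(f))_g$ in $F(A)$ with $\kappa(f)=e$. The resulting $f$ is the required weak Rohlin element. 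I expect this lifting step to carry essentially all the difficulty, since both the surjectivity of $\kappa$ and the $\sigma$-ideal lifting encode the $\mathcal{Z}$-stability and comparison theory of $A$; the remaining steps are formal once $M\cong\mathcal{R}_0$ and the factorial Rohlin theorem are in hand.
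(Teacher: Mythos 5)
Your route is the same one the paper takes: the paper proves this theorem by deferring to Matui--Sato \cite{MS1} (Theorem 3.4 there, with the non-unital modifications of \cite{Na2} and \cite{Na3}), and that proof has exactly your architecture --- reduction to the hyperfinite II$_1$ factor via nuclearity and uniqueness of the trace, the easy implication from the fact that an inner automorphism of $M$ acts trivially on $M^\omega\cap M'$, and the hard implication by pulling a Connes--Jones--Ocneanu Rohlin tower in $M^\omega\cap M'$ back through the surjection $\kappa$. Your argument for ``weak Rohlin $\Rightarrow$ strongly outer'' is correct as written. There is, however, a genuine gap at the step you yourself flag as carrying all the difficulty.

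You assert that, the trace-kernel ideal being a $\sigma$-ideal and ``the relations defining an orthogonal $G$-family of positive contractions being of projective type,'' the system $(\tilde\alpha_g(e))_g$ lifts to an \emph{$\alpha$-equivariant} orthogonal family $(\alpha_g(f))_g$. That inference is a non sequitur. Projectivity (equivalently, taking a positive lift of $\sum_g \lambda_g\tilde\alpha_g(e)$ for well-separated scalars $\lambda_g$ and applying functional calculus with disjointly supported functions) produces \emph{some} pairwise orthogonal positive contractions lifting the projections $\tilde\alpha_g(e)$, but nothing makes that family the $\alpha$-orbit of a single element, which is exactly what Definition 2.1 demands; there is no equivariant projectivity to appeal to. What actually closes the gap --- and this is the substance of the Sato/Matui--Sato argument behind the paper's citation --- is the $\sigma$-ideal property combined with averaging over $G$. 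Concretely: let $f_0\in F(A)$ be any positive contraction with $\kappa(f_0)=e$, let $C\subset F(A)$ be the $G$-invariant separable C$^*$-subalgebra generated by $\{\alpha_g(f_0)\,:\,g\in G\}$, and let $J=\ker\kappa$, which is $G$-invariant since $\kappa$ is equivariant. The $\sigma$-ideal property gives a positive contraction $u\in J\cap C'$ with $uc=c$ for all $c\in J\cap C$; each $\alpha_g(u)$ again lies in $J\cap C'$ and acts as a unit on $J\cap C$, hence so does $\bar u:=|G|^{-1}\sum_{g\in G}\alpha_g(u)$, which in addition satisfies $\alpha_g(\bar u)=\bar u$. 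Put $f:=(1-\bar u)f_0(1-\bar u)$. Then $f$ is a positive contraction, $\kappa(f)=e$, so $\tau_\omega(f)=\tilde\tau_\omega(e)=1/|G|$, and for $k\neq\iota$,
$$
f\alpha_k(f)=(1-\bar u)^3\, f_0\alpha_k(f_0)\,(1-\bar u)=0,
$$
because $\bar u$ is $\alpha_k$-fixed, commutes with $C$, and acts as a two-sided unit on $f_0\alpha_k(f_0)\in J\cap C$. This $f$ is the required weak Rohlin element; with this replacement your proof is complete and coincides with the one the paper cites. (Incidentally, property (SI) is not needed for this theorem: the inputs are hyperfiniteness of $\pi_\tau(A)''$, the von Neumann Rohlin tower, surjectivity of $\kappa$, and the $\sigma$-ideal property; (SI) enters the paper only later, in Proposition \ref{thm:Matui-Sato} and Section \ref{sec:target}.)
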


Essentially the same proofs as \cite[Lemma 4.7]{MS2} and \cite[Proposition 4.8]{MS2} show 
the following proposition. 
See also \cite[Propostion 3.3]{MS3} and \cite[Theorem 4.1]{BBSTWW}. 
Note that if $A$ is a simple separable nuclear C$^*$-algebra with 
a unique tracial state and no unbounded traces, then $A\otimes\mathcal{W}$ has property (SI) 
since $\mathcal{W}$ is $\mathcal{Z}$-stable (see \cite{Ror}, \cite{MS} and \cite{Na2}).

\begin{pro}\label{thm:Matui-Sato} 
Let $A$ be a simple separable nuclear C$^*$-algebra with a unique tracial state 
and no unbounded traces, and let $\alpha$ be a strongly outer action of a finite group 
$G$ on $A$. Then: \ \\
(i) $F(A\otimes\mathcal{W})^{\alpha\otimes\mathrm{id}}$ has a unique tracial state $\tau_{\omega}$. 
\ \\
(ii) If $a$ and $b$ are positive elements in $F(A\otimes\mathcal{W})^{\alpha\otimes\mathrm{id}}$
satisfying $d_{\tau_{\omega}}(a)< d_{\tau_{\omega}} (b)$, then there exists an element 
$r\in F(A\otimes\mathcal{W})^{\alpha\otimes\mathrm{id}}$ such that $r^*br=a$. 
\end{pro}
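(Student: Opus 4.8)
The plan is to follow Matui and Sato and prove both statements by reducing to the injective II$_1$ factor and then transferring the conclusions back along the GNS representation by means of property (SI). Write $B:=A\otimes\mathcal{W}$ and $\beta:=\alpha\otimes\mathrm{id}$. Since $A$ and $\mathcal{W}$ each have a unique tracial state and no unbounded traces, so does $B$; call it $\tau$. As $\mathcal{W}$ is $\mathcal{Z}$-stable, $B$ is $\mathcal{Z}$-stable and hence has property (SI), and since $\beta$ is strongly outer it has the weak Rohlin property by Theorem \ref{thm:weak-rohlin}. Put $N:=\pi_\tau(B)''$; by nuclearity and uniqueness of the trace, $N$ is the injective II$_1$ factor, and strong outerness says that $\tilde\beta_g$ is outer for every $g\neq\iota$. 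The GNS map induces a trace-compatible, $G$-equivariant $*$-homomorphism $F(B)\to N^\omega\cap N'$, which restricts to $F(B)^\beta\to (N^\omega\cap N')^{\tilde\beta}$.

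For part (i), I would use that the von Neumann target $(N^\omega\cap N')^{\tilde\beta}$ has a unique tracial state: $N^\omega\cap N'$ is a II$_1$ factor because $N$ is McDuff, and outerness of the finite group action $\tilde\beta$ forces its fixed point algebra to retain a unique trace (cf. the classification of finite group actions on the injective II$_1$ factor in \cite{Jones} and \cite{C3}). It then remains to propagate uniqueness across the map above, and this is where the weak Rohlin property and property (SI) enter: given an arbitrary tracial state $\sigma$ on $F(B)^\beta$, the orthogonal Rohlin contractions $\{\beta_g(f)\}_{g\in G}$ produce enough positive elements on which $\sigma$ and $\tau_\omega$ must agree, while property (SI) shows that the trace-small central sequences forming the kernel of the map must also lie in the kernel of $\sigma$. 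Thus $\sigma$ factors through the map and coincides with $\tau_\omega$. This is exactly the mechanism of \cite[Lemma 4.7]{MS2}, carried out in the fixed point algebra.

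For part (ii) the goal is strict comparison. Given positive $a,b\in F(B)^\beta$ with $d_{\tau_\omega}(a)<d_{\tau_\omega}(b)$, I would first use functional calculus to split the gap, replacing $a$ and $b$ by positive elements $a_0,b_0$ with $a_0\precsim a$, $b_0\precsim b$ and a definite separation $d_{\tau_\omega}(a_0)<d_{\tau_\omega}(b_0)$. At the level of $(N^\omega\cap N')^{\tilde\beta}$, where comparison is governed by the trace, one obtains an element implementing the subequivalence of the images. Property (SI) then converts this strict trace inequality into an honest witness back in $F(B)^\beta$: it supplies an $r$ whose $r^*r$ is the relevant trace-small remainder and on which $b$ acts as a unit, and assembling the pieces yields $r$ with $r^*br=a$. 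This follows \cite[Proposition 4.8]{MS2}; compare \cite[Propostion 3.3]{MS3} and \cite[Theorem 4.1]{BBSTWW}.

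The main obstacle, and the only genuinely new point beyond invoking Matui and Sato, is keeping every construction inside $F(B)^\beta$ rather than merely in $F(B)$. The comparison element produced on the von Neumann side and the element supplied by property (SI) are a priori only approximately $\beta$-invariant, so one must average them over $G$ with the help of the Rohlin element $f$, whose orthogonality $\beta_g(f)\beta_h(f)=0$ for $g\neq h$ lets the averaged element behave like a genuine fixed element while preserving the norm and trace estimates. A secondary, routine issue is that \cite{MS2} assumes unitality, so the approximate-unit bookkeeping for the non-unital $B$ must be handled as in \cite{Na2} and \cite{Na3}. Once equivariance and these non-unital modifications are in place, the Matui--Sato arguments transfer essentially verbatim.
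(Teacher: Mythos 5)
Your proposal is correct and follows essentially the same route as the paper: the paper proves this proposition by invoking \cite[Lemma 4.7]{MS2} and \cite[Proposition 4.8]{MS2} (with the non-unital modifications of \cite{Na2} and \cite{Na3}), which is exactly the mechanism you reconstruct — reduction via GNS to the fixed point algebra of the central sequence algebra of the injective II$_1$ factor, the weak Rohlin property from Theorem \ref{thm:weak-rohlin}, property (SI) from $\mathcal{Z}$-stability, and averaging over $G$ so that the (SI) witnesses become genuinely invariant. The same scheme is carried out in detail in Section \ref{sec:target} for the related algebra $\mathcal{B}^{\gamma}$ (Propositions \ref{pro:fixed-factor}, \ref{pro:surjective}, \ref{pro:target-si} and \ref{pro:main-section3}), and your outline matches it step for step.
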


\subsection{Rohlin property and properties of $F(A\otimes\mathcal{W})^{\alpha\otimes\mathrm{id}}$}

We shall recall some results in \cite{Na4} (see also \cite{GS}) and \cite{Na3}. 

\begin{Def} (cf. \cite[Definition 3.1]{I1} and \cite[Definition 3.1]{Na4}). 
An action $\alpha$ of a finite group $G$ on a separable C$^*$-algebra $A$ is said to have the 
\textit{Rohlin property} if there exists a partition of unity $\{p_{g}\}_{g\in G}\subset F(A)$ consisting 
of projections satisfying 
$$
\alpha_{g} (p_{h}) =p_{gh},
$$
for any $g,h\in G$.
\end{Def}

For any finite group $G$, there exists an action of $G$ on $\mathcal{W}$ with the Rohlin property 
by \cite[Example 3.2]{Na4}. The following theorem is \cite[Corollary 3.7]{Na4}.

\begin{thm}\label{thm:classification}
Let $\alpha$ and $\beta$ be actions of a finite group $G$ on $\mathcal{W}$ with the Rohlin property. 
Then $\alpha$ and $\beta$ are conjugate. 
\end{thm}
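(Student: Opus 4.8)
The plan is to prove conjugacy by an Evans--Kishimoto type two-sided approximate intertwining, producing an isomorphism $\theta$ of $\mathcal{W}$ with $\theta\circ\alpha_g=\beta_g\circ\theta$ for all $g\in G$ as a limit of inner perturbations. The non-equivariant engine is the classification of $\mathcal{W}$: since $\mathcal{W}$ is simple, separable, nuclear, $KK$-equivalent to $\{0\}$ and has a unique tracial state with no unbounded traces, the Kirchberg--Phillips type uniqueness theorem for $\mathcal{W}$ says that two approximately multiplicative maps into $\mathcal{W}$ inducing the same map on the (unique) trace are approximately unitarily equivalent. Both $\alpha_g$ and $\beta_g$ preserve the unique trace, so at each stage of the intertwining the maps $\phi_n\circ\alpha_g$ and $\beta_g\circ\phi_n$ carry identical trace data and are therefore approximately unitarily equivalent.

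First I would set up the iteration: fix increasing finite sets $F_1\subset F_2\subset\cdots$ with dense union in $\mathcal{W}$ and tolerances $\varepsilon_n\downarrow 0$. Starting from $\phi_0=\mathrm{id}$, I would inductively produce automorphisms $\phi_n$ and unitaries $u_n\in M(\mathcal{W})$ so that $\mathrm{Ad}(u_n)\circ\phi_n\circ\alpha_g$ agrees with $\beta_g\circ\mathrm{Ad}(u_n)\circ\phi_n$ to within $\varepsilon_n$ on $F_n$ for every $g$, while keeping $u_n$ close enough to $1$ on the previously chosen data that the telescoping compositions $\mathrm{Ad}(u_n\cdots u_1)\circ\phi_0$ converge. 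Passing the discrepancies back and forth between the two copies of $\mathcal{W}$ in the usual way then yields an isomorphism intertwining $\alpha$ and $\beta$ in the limit.

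The heart of the matter, and the step I expect to be the main obstacle, is producing the correcting unitaries $u_n$ \emph{equivariantly}. The non-equivariant uniqueness theorem only supplies a unitary $w$ with $\mathrm{Ad}(w)\circ\phi_n\circ\alpha_g\approx\beta_g\circ\phi_n$; to feed this into the intertwining one must convert the resulting family $g\mapsto w$ into one that is genuinely (approximately) $\alpha$-equivariant, which amounts to showing that a suitable approximate $\alpha$-cocycle in the central sequence C$^*$-algebra is an approximate coboundary. Here I would invoke the Rohlin property: since $\alpha$ has the Rohlin property it is strongly outer, so Theorem \ref{thm:weak-rohlin} applies and Proposition \ref{thm:Matui-Sato} shows that the fixed point subalgebra $F(\mathcal{W})^{\alpha}$ of the central sequence C$^*$-algebra has a unique tracial state $\tau_{\omega}$ and strict comparison of positive elements. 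A Rohlin cohomology vanishing lemma then follows from Connes' $2\times2$ matrix trick carried out inside $F(\mathcal{W})^{\alpha}$: the Rohlin partition $\{p_g\}_{g\in G}$ supplies the requisite orthogonal projections, strict comparison (equal traces forcing Murray--von Neumann equivalence) lets one rotate these projections into position along a norm-continuous path of unitaries, and recombining against $\{p_g\}_{g\in G}$ converts the approximate cocycle into a coboundary. This is the same mechanism that will reappear in Lemma \ref{lem:cohomology}.

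With the equivariant correction available in quantitative form, the $u_n$ can be chosen so that the approximate equivariance of $\mathrm{Ad}(u_n)\circ\phi_n$ improves at each step while the intertwining estimates are preserved. Feeding these into the two-sided scheme of the second paragraph produces the limiting isomorphism $\theta$ with $\theta\circ\alpha_g=\beta_g\circ\theta$, establishing the conjugacy. The only genuinely delicate point is the simultaneous control of the intertwining tolerances and the equivariance correction; once the cohomology vanishing is secured with the needed uniformity, the remaining bookkeeping is routine.
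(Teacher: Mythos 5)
First, a point of reference: this paper does not actually prove Theorem \ref{thm:classification}; it is imported verbatim from the author's earlier work (\cite[Corollary 3.7]{Na4}), so the comparison here is with that proof. Your overall architecture does match it: the proof in \cite{Na4} is an Evans--Kishimoto intertwining in which the non-equivariant engine is that automorphisms of $\mathcal{W}$ (more generally, trace-compatible approximately multiplicative maps into $\mathcal{W}$) are approximately unitarily equivalent, coming from Razak-type classification, and the equivariant correction comes from a one-cohomology vanishing lemma furnished by the Rohlin property. So the skeleton of your argument is the right one.

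However, your account of the crucial cohomology vanishing step is wrong, and in a way that inverts the logic of the present paper. When the Rohlin property is \emph{available}, cocycle vanishing is an elementary averaging argument going back to Izumi \cite{I1}: given a cocycle $w$ and a Rohlin partition $\{p_g\}_{g\in G}$ in the central sequence algebra (which commutes with $w$ when $w$ lives in the algebra, or after passing to a reindexed representative), one sets $v:=\sum_{h\in G} w(h)p_h$ and checks directly from $\alpha_g(p_h)=p_{gh}$ and the cocycle identity that $v$ is a unitary with $w(g)=v\alpha_g(v^*)$; no strict comparison, no Murray--von Neumann equivalence, and no $2\times 2$ matrix trick are involved. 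Connes' $2\times 2$ matrix trick combined with comparison in the fixed point central sequence algebra---the mechanism of Lemma \ref{lem:cohomology}---is what this paper develops precisely \emph{because} Rohlin projections are not available there: the Rohlin property is the conclusion of Theorem \ref{thm:main}, not a hypothesis. So your claim that your vanishing lemma uses ``the same mechanism that will reappear in Lemma \ref{lem:cohomology}'' has the dependence backwards, and the hybrid you describe (Rohlin partition plus strict comparison rotating projections into position) is not a coherent argument as stated. Relatedly, you cannot invoke Proposition \ref{thm:Matui-Sato} for $F(\mathcal{W})^{\alpha}$: that proposition is stated for actions of the form $\alpha\otimes\mathrm{id}$ on $A\otimes\mathcal{W}$, not for an arbitrary (Rohlin, hence strongly outer) action on $\mathcal{W}$ itself, so this appeal is unjustified as written. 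None of this sinks your plan---replacing your correction step by Izumi's averaging argument restores it, and that is exactly what \cite{Na4} does---but as proposed, the key lemma is not proved.
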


Note that there exists a strongly outer action $\alpha$ of $\mathbb{Z}_2$ on $\mathcal{W}$ 
such that $\alpha$ does not have the Rohlin property (see \cite[Example 5.6]{Na4}). 

Since we can regard $F(\mathcal{W})$ is a unital C$^*$-subalgebra of 
$F(A\otimes\mathcal{W})^{\alpha\otimes\mathrm{id}}$, 
we obtain the following proposition by \cite[Proposition 4.2]{Na3} and Proposition \ref{thm:Matui-Sato}. 

\begin{pro}\label{pro:key-pro} Let $\tau_{\omega}$ be the unique tracial state on 
$F(A\otimes\mathcal{W})^{\alpha\otimes\mathrm{id}}$. \ \\
(i) For any $N\in\mathbb{N}$, there exists a unital homomorphism from $M_N(\mathbb{C})$ to 
$F(A\otimes\mathcal{W})^{\alpha\otimes\mathrm{id}}$. \ \\
(ii) For any $\theta\in [0,1]$, there exists a non-zero projection $p$ in 
$F(A\otimes\mathcal{W})^{\alpha\otimes\mathrm{id}}$ such that $\tau_{\omega}(p)=\theta$. \ \\
(iii) Let $h$ be a positive element in $F(A\otimes\mathcal{W})^{\alpha\otimes\mathrm{id}}$ 
such that $d_{\tau_{\omega}}(h)>0$. For any $\theta \in [0, d_{\tau_{\omega}}(h))$, 
there exists a non-zero projection $p$ in $\overline{hF(A\otimes\mathcal{W})^{\alpha\otimes
\mathrm{id}}h}$ such that $\tau_{\omega}(p)=\theta$. 
\end{pro}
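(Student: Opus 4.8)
The plan is to deduce Proposition \ref{pro:key-pro} from the structural results already collected, principally Proposition \ref{thm:Matui-Sato} and \cite[Proposition 4.2]{Na3}. The essential point, as the statement indicates, is that $F(\mathcal{W})$ sits inside $M:=F(A\otimes\mathcal{W})^{\alpha\otimes\mathrm{id}}$ as a \emph{unital} C$^*$-subalgebra. To see this, I would first recall that $\mathcal{W}$ is $\mathcal{Z}$-stable, hence absorbs itself tensorially, so there is a natural embedding of $F(\mathcal{W})$ into $F(A\otimes\mathcal{W})$ whose image commutes with the copy of $A$ and is fixed by $\alpha\otimes\mathrm{id}$ (since $\alpha\otimes\mathrm{id}$ acts trivially on the $\mathcal{W}$-tensor factor). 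Because the unit of $F(\mathcal{W})$ is carried to the unit of $F(A\otimes\mathcal{W})$, which lies in $M$, this embedding is unital into $M$.

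Granting the unital inclusion $F(\mathcal{W})\hookrightarrow M$, parts (i) and (ii) follow almost formally. For (i), by \cite[Proposition 4.2]{Na3} the algebra $F(\mathcal{W})$ contains a unital copy of $M_N(\mathbb{C})$ for every $N$; composing with the unital inclusion gives a unital homomorphism $M_N(\mathbb{C})\to M$. For (ii), the same reference supplies, for each $\theta\in[0,1]$, a non-zero projection $p\in F(\mathcal{W})$ with $\tau_{\omega}(p)=\theta$; since the inclusion is unital and trace-preserving (the tracial state on $M$ restricts to $\tau_{\omega}$ on $F(\mathcal{W})$, both being the unique tracial states by Proposition \ref{thm:Matui-Sato}(i)), the image projection has the same trace in $M$. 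Here I would need to check that the unique trace $\tau_{\omega}$ on $M$ restricts to the unique trace on $F(\mathcal{W})$; this is immediate from uniqueness of tracial states on both algebras.

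Part (iii) is where the comparison structure of $M$ itself, rather than of $F(\mathcal{W})$, does the real work. Given a positive $h\in M$ with $d_{\tau_{\omega}}(h)>0$, and a target value $\theta\in[0,d_{\tau_{\omega}}(h))$, I would first produce, using (ii), a projection $q\in M$ with $\theta<\tau_{\omega}(q)<d_{\tau_{\omega}}(h)$, or more directly a projection of trace exactly $\theta$ together with a strictly-comparison argument. The idea is to invoke Proposition \ref{thm:Matui-Sato}(ii): since $d_{\tau_{\omega}}(q)=\tau_{\omega}(q)<d_{\tau_{\omega}}(h)$, there is $r\in M$ with $r^*hr=q$, which realizes $q$ as Murray-von Neumann equivalent to a projection inside the hereditary subalgebra $\overline{hMh}$. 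Transporting $q$ along this equivalence yields the desired projection $p\in\overline{hMh}$ with $\tau_{\omega}(p)=\theta$.

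The main obstacle I anticipate is the last transport step in (iii): from $r^*hr=q$ one obtains a positive element $x:=h^{1/2}rr^*h^{1/2}\in\overline{hMh}$ that is equivalent to $q$ and hence has the same spectrum away from $0$, so its spectral projection for $\{1\}$ (after normalizing via functional calculus on $x$, which is a non-zero projection since $q$ is a projection) lands in $\overline{hMh}$ and carries trace $\theta$. Verifying that this spectral projection is genuinely non-zero and has the correct trace requires care, because $\overline{hMh}$ is only a hereditary subalgebra, not unital, and one must ensure the functional calculus stays inside it; this is routine but is the step where the argument is least purely formal. Alternatively, and perhaps more cleanly, I would apply part (iii) of \cite[Proposition 4.2]{Na3} directly to $F(\mathcal{W})$ when $h$ already lies in the subalgebra, and otherwise reduce to that case via the strict comparison of Proposition \ref{thm:Matui-Sato}(ii); the flexibility in choosing which algebra to work in is what makes the overall argument go through.
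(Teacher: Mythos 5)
Your proposal is correct and takes essentially the same route as the paper, whose entire proof is the one-sentence observation that $F(\mathcal{W})$ sits as a unital C$^*$-subalgebra of $F(A\otimes\mathcal{W})^{\alpha\otimes\mathrm{id}}$, so that (i) and (ii) follow from \cite[Proposition 4.2]{Na3} and (iii) from the strict comparison of Proposition \ref{thm:Matui-Sato}. The only cosmetic difference is that your transport step in (iii) is simpler than you fear: with $v:=h^{1/2}r$ one has $v^*v=q$, a projection, so $vv^*=h^{1/2}rr^*h^{1/2}$ is automatically a projection lying in $\overline{hF(A\otimes\mathcal{W})^{\alpha\otimes\mathrm{id}}h}$ with the same trace, and no functional calculus is needed (also, the natural embedding $F(\mathcal{W})\hookrightarrow F(A\otimes\mathcal{W})$ via $x\mapsto 1_{A^{\sim}}\otimes x$ in the multiplier picture needs no self-absorption of $\mathcal{W}$).
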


Using the proposition above instead of \cite[Proposition 4.2]{Na3}, the same arguments as in 
\cite[Section 4]{Na3} show the following proposition. 

\begin{pro}\label{pro:MvN-u} (cf. \cite[Proposition 4.8]{Na3}). 
Let $p$ and $q$ be projections in $F(A\otimes\mathcal{W})^{\alpha\otimes\mathrm{id}}$ 
such that $\tau_{\omega} (p)<1$ where $\tau_{\omega}$ is the unique tracial state on 
$F(A\otimes\mathcal{W})^{\alpha\otimes\mathrm{id}}$. Then $p$ and $q$ are Murray-von Neumann 
equivalent if and only if $p$ and $q$ are unitarily equivalent. 
\end{pro}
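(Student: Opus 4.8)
The plan is to prove the nontrivial implication and to observe that the converse is immediate and completely general. Suppose first that $p$ and $q$ are unitarily equivalent, say $q=upu^*$ for a unitary $u$ in $F(A\otimes\mathcal{W})^{\alpha\otimes\mathrm{id}}$. Then $v:=up$ is a partial isometry with $v^*v=p$ and $vv^*=q$, so $p$ and $q$ are Murray--von Neumann equivalent. For the forward direction the strategy is the classical complementation trick: from a Murray--von Neumann equivalence $p\sim q$ I first produce a second equivalence $1-p\sim 1-q$ of the complementary projections, and then splice the two partial isometries into a single unitary. Concretely, if $v^*v=p$, $vv^*=q$ and $w^*w=1-p$, $ww^*=1-q$, then $u:=v+w$ satisfies $u^*u=uu^*=1$ and $upu^*=q$: the cross terms vanish because $v=vp=qv$ and $w=w(1-p)=(1-q)w$, so that $v^*w=v^*q(1-q)w=0$ and $vw^*=vp(1-p)w^*=0$, while $up=v$ and hence $upu^*=vv^*=q$.

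Thus everything reduces to establishing $1-p\sim 1-q$. Since Murray--von Neumann equivalent projections have equal trace and $\tau_{\omega}$ is the unique tracial state on $F(A\otimes\mathcal{W})^{\alpha\otimes\mathrm{id}}$ by Proposition \ref{thm:Matui-Sato}(i), the hypothesis $p\sim q$ gives $\tau_{\omega}(p)=\tau_{\omega}(q)$, whence $\tau_{\omega}(1-p)=\tau_{\omega}(1-q)$. The assumption $\tau_{\omega}(p)<1$ enters precisely here: it guarantees $\tau_{\omega}(1-p)=\tau_{\omega}(1-q)>0$, so the complementary projections lie in the regime where the comparison theory has room to operate.

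The main step, and the point I expect to be the crux, is to upgrade the \emph{equality} $\tau_{\omega}(1-p)=\tau_{\omega}(1-q)$ to an honest Murray--von Neumann equivalence $1-p\sim 1-q$. The strict comparison of Proposition \ref{thm:Matui-Sato}(ii) only yields Cuntz subequivalence $a\precsim b$ under the strict inequality $d_{\tau_{\omega}}(a)<d_{\tau_{\omega}}(b)$, and so does not directly address the boundary case of equal traces. To close this gap I would run the same comparison argument as in \cite[Section 4]{Na3}, with Proposition \ref{pro:key-pro} playing the role that \cite[Proposition 4.2]{Na3} plays there: the richness of the projection lattice it provides---projections of arbitrary prescribed trace, and projections of any trace below $d_{\tau_{\omega}}(h)$ inside a hereditary subalgebra $\overline{hF(A\otimes\mathcal{W})^{\alpha\otimes\mathrm{id}}h}$---allows one to cut a complement down by an arbitrarily small amount, to apply strict comparison to the resulting strict inequalities to obtain Murray--von Neumann subequivalences in both directions (using that a Cuntz subequivalence of a projection into a projection is realized by an honest subequivalence onto a subprojection), and to combine these with the stable finiteness furnished by the tracial state in order to identify the full complements. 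Granting $1-p\sim 1-q$, the construction of $u=v+w$ from the first paragraph completes the proof.
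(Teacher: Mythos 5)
Your reduction of the problem is fine: the backward direction is trivial, the splice $u=v+w$ is computed correctly, and the whole issue is indeed to produce an equivalence $1-p\sim 1-q$ from $\tau_{\omega}(1-p)=\tau_{\omega}(1-q)>0$. But the mechanism you propose for this crux does not work, for two related reasons. First, strict comparison (Proposition \ref{thm:Matui-Sato}(ii)) can never give you the ``Murray--von Neumann subequivalences in both directions'' that your Cantor--Schr\"oder--Bernstein step needs. Cutting $1-p$ down by a projection $e$ of trace $\varepsilon$ and comparing yields only $(1-p)-e\sim g\le 1-q$; the defects $e$ and $(1-q)-g$ have small \emph{trace} but norm one, and they do not shrink in norm as $\varepsilon\to 0$, so no limit or diagonal argument over $\varepsilon$ removes them --- one is left with $(1-p)-e'\sim g'\le 1-q$ where $e'$ and $(1-q)-g'$ are trace-zero projections, which comparison theory is powerless to match. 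A genuine subequivalence $1-p\precsim 1-q$ would be an embedding of a projection into a projection of the \emph{same} trace, which (since Murray--von Neumann equivalence preserves $\tau_\omega$) is exactly as hard as the equivalence you are trying to prove; there is no way to break this circularity with trace inequalities alone.

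Second, the phrase ``stable finiteness furnished by the tracial state'' is a non sequitur: a tracial state yields finiteness only when it is faithful, and $\tau_{\omega}$ is \emph{not} faithful on $F(A\otimes\mathcal{W})^{\alpha\otimes\mathrm{id}}$. Indeed, using $\mathcal{W}\cong\mathcal{W}\otimes M_{2^\infty}$, the classes of sequences $(x_n\otimes p_n)_n$, with $\{x_n\}$ an approximate unit for $\mathcal{W}$ and $p_n\in M_{2^\infty}$ projections of trace $2^{-n}$ chosen to commute with ever larger tensor factors, give nonzero projections of trace zero in $F(\mathcal{W})\subset F(A\otimes\mathcal{W})^{\alpha\otimes\mathrm{id}}$. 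This is precisely why every comparison statement in the paper carries a strict-inequality or strictly-positive-trace hypothesis (Proposition \ref{thm:Matui-Sato}(ii), Corollary \ref{thm:comparison}, Corollary \ref{cor:comparison}), and why, for instance, in Lemma \ref{lem:cohomology} the diagonal argument only produces a projection $p$ with $\tau_{\omega}(p)=1$ and Corollary \ref{cor:comparison} must then be invoked to identify it with $1$ up to equivalence. The actual proof of Proposition \ref{pro:MvN-u} (following \cite[Section 4]{Na3}, with Proposition \ref{pro:key-pro} replacing \cite[Proposition 4.2]{Na3}) exploits the central-sequence structure itself --- reindexing and diagonal arguments at the level of representing sequences --- rather than the abstract axioms ``unique trace $+$ strict comparison $+$ rich projection lattice'' which, as the trace-zero projections show, are insufficient to settle the equal-trace boundary case.
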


\section{Target algebra}\label{sec:target}

In the rest of this paper, we assume that $A$ is a simple separable nuclear 
C$^*$-algebra with a unique tracial state $\tau_A$ and no unbounded traces, and $\alpha$ is 
a strongly outer action of a finite group $G$ on $A$. 
Define an action $\gamma$ on $A\otimes\mathcal{W}$ by $\gamma:= \alpha\otimes\mathrm{id}$. 
Let $\tau_{\mathcal{W}}$ denote the unique tracial state on $\mathcal{W}$, 
and let $\tau:=\tau_A\otimes\tau_{\mathcal{W}}$ 
on $A\otimes\mathcal{W}$. 
For any $a\in A$ and $b\in \mathcal{W}$, we regard $a\otimes 1_{\mathcal{W}^{\sim}}$ and 
$1_{A^{\sim}}\otimes b$ as elements in $M(A\otimes\mathcal{W})$. Put 
$$
\mathcal{A}:= \{(x_n)_n\in (A\otimes\mathcal{W})^{\omega}\; |\; 
([x_n ,a\otimes 1_{\mathcal{W}^{\sim}}])_n=0 \text{ for any }a\in A\}
$$
and 
$$
\mathcal{I}:= \{(x_n)_n\in\mathcal{A} \; |\; (x_n (a\otimes 1_{\mathcal{W}^{\sim}}))_n=0 
\text{ for any }a\in A \}.
$$
Then $\mathcal{I}$ is a closed ideal of $\mathcal{A}$, and define  
$\mathcal{B}:= \mathcal{A}/\mathcal{I}$. 
Note that for any $[(x_n)_n]\in \mathcal{B}$,
$$
\| [(x_n)_n] \| = \sup_{a\in A_{+,1}} \lim_{n\to \omega} \|x_n(a\otimes 1_{\mathcal{W}^{\sim}}) \|.
$$
Indeed, let $\| [(x_n)_n] \|^{\prime}:=\sup_{a\in A_{+,1}}\lim_{n\to \omega} \|x_n
(a\otimes 1_{\mathcal{W}^{\sim}}) \|$ for any $[(x_n)_n]\in \mathcal{B}$. 
Then it can be easily checked that $\|\cdot\|^{\prime}$ is a well defined C$^*$-norm on 
$\mathcal{B}$. By the uniqueness of the C$^*$-norm, $\| [(x_n)_n]  \|= \| [(x_n)_n]  \|^{\prime}$ 
for any $[(x_n)_n] \in\mathcal{B}$. 
The action $\gamma$ on $A\otimes\mathcal{W}$ induces a natural action 
on $\mathcal{B}$. 
We denote it by the same symbol $\gamma$ for simplicity. 
In this section we shall consider properties of the fixed point algebra $\mathcal{B}^{\gamma}$.

Consider the GNS representation $(\pi_{\tau},H_{\tau})$  of $A\otimes\mathcal{W}$ associated with 
$\tau$. Note that $\pi_{\tau}$ extends to a representation $\overline{\pi}_{\tau}$ 
of $M(A\otimes\mathcal{W})$ on $H_{\tau}$ and 
$\overline{\pi}_{\tau}(M(A\otimes\mathcal{W}))\subset \pi_{\tau}(A\otimes\mathcal{W})''$ 
(see, for example, \cite[3.12]{Ped2}). Put 
$$
M:= \ell^{\infty}(\mathbb{N}, \pi_{\tau}(A\otimes\mathcal{W})'')/\{\{x_n\}_{n\in\mathbb{N}} 
\; |\; \lim_{n\to\omega}\tilde{\tau} (x_n^*x_n)=0\},
$$
and define a homomorphism $\Pi$ from $A$ to $M$ by $\Pi (a):= (\overline{\pi}_{\tau}(a\otimes 
1_{\mathcal{W}^{\sim}}))_n$.  
Note that $M$ is a von Neumann algebraic ultrapower of $\pi_{\tau}(A\otimes\mathcal{W})''$. 
Since $\tau=\tau_A\otimes\tau_\mathcal{W}$, $\pi_{\tau}(A\otimes\mathcal{W})''$ is isomorphic to 
$\pi_{\tau_A}(A)''\bar{\otimes}\pi_{\tau_\mathcal{W}}(\mathcal{W})''$. 
Moreover, $\pi_{\tau}(A\otimes\mathcal{W})''$, $\pi_{\tau_A}(A)''$ and 
$\pi_{\tau_\mathcal{W}}(\mathcal{W})''$ are isomorphic to the AFD II$_1$ factor $\mathcal{R}_0$. 
Set 
$$
\mathcal{M}:= M\cap \Pi (A)'.
$$
It is easy to see that $\mathcal{M}$ is isomorphic to 
$(\mathcal{R}_0\bar{\otimes}\mathcal{R}_0)^{\omega}\cap
(\mathcal{R}_0\bar{\otimes}\mathbb{C})'$ where 
$(\mathcal{R}_0\bar{\otimes}\mathcal{R}_0)^{\omega}$ is the von Neumann algebraic 
ultrapower of $\mathcal{R}_0\bar{\otimes}\mathcal{R}_0$. 

\begin{pro}\label{pro:factor}
With notation as above, $\mathcal{M}$ is a factor of type II$_1$. 
\end{pro}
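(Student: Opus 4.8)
The plan is to work with the concrete description already obtained, namely the isomorphism $\mathcal{M}\cong N^{\omega}\cap N_1'$, where $N:=\mathcal{R}_0\bar{\otimes}\mathcal{R}_0$, $N_1:=\mathcal{R}_0\bar{\otimes}\mathbb{C}$, $N_2:=\mathbb{C}\bar{\otimes}\mathcal{R}_0$, and $N^{\omega}$ denotes the von Neumann algebraic ultrapower of $N$. Since $M$ is a finite von Neumann algebra with faithful normal trace (the ultrapower trace) and $\mathcal{M}=M\cap\Pi(A)'$ is a relative commutant, hence a weakly closed $*$-subalgebra, $\mathcal{M}$ is automatically a finite von Neumann algebra carrying a faithful normal tracial state. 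Thus it remains only to show that $\mathcal{M}$ is a factor and that it is infinite dimensional. First I would record two inclusions inside $\mathcal{M}=N^{\omega}\cap N_1'$: since $N_1$ and $N_2$ are commuting subfactors of $N$ with $N_1\vee N_2=N$, the constant sequences from $N_2$ satisfy $N_2\subseteq N^{\omega}\cap N_1'=\mathcal{M}$, and since $N_1\subseteq N$ the central sequence algebra $N_{\omega}:=N^{\omega}\cap N'$ satisfies $N_{\omega}\subseteq N^{\omega}\cap N_1'=\mathcal{M}$.

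The heart of the argument is to prove $\mathcal{M}$ is a factor by showing $Z(\mathcal{M})\subseteq Z(N_{\omega})$. Let $z\in Z(\mathcal{M})$. Then $z$ commutes with the subalgebra $N_2\subseteq\mathcal{M}$, while $z\in\mathcal{M}\subseteq N_1'$ forces $z$ to commute with $N_1$; as $N_1$ and $N_2$ generate $N$, I conclude $z\in N^{\omega}\cap N'=N_{\omega}$. Since moreover $N_{\omega}\subseteq\mathcal{M}$ and $z$ is central in $\mathcal{M}$, $z$ commutes with $N_{\omega}$, so $z\in Z(N_{\omega})$. Now $N=\mathcal{R}_0\bar{\otimes}\mathcal{R}_0\cong\mathcal{R}_0$, and it is a classical fact that the central sequence algebra $N_{\omega}=N^{\omega}\cap N'$ of the AFD II$_1$ factor is itself a II$_1$ factor; hence $Z(N_{\omega})=\mathbb{C}$ and therefore $Z(\mathcal{M})=\mathbb{C}$, i.e. $\mathcal{M}$ is a factor.

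Finally, to see that the finite factor $\mathcal{M}$ is of type II$_1$ rather than a matrix algebra, I would invoke the inclusion $N_2\cong\mathcal{R}_0\subseteq\mathcal{M}$: since $\mathcal{M}$ contains a diffuse subalgebra it is infinite dimensional, and a finite factor that is infinite dimensional is of type II$_1$. The main obstacle is the factoriality input for $N_{\omega}$; all the surrounding relations are the standard tensor-product facts for commuting subfactors, and once $Z(\mathcal{M})\subseteq Z(N_{\omega})$ is established the conclusion is immediate. I expect the only point needing care is verifying that the various relative commutants are taken inside the tracial ultrapower $N^{\omega}$ consistently, so that the inclusions $N_2,N_{\omega}\subseteq\mathcal{M}$ and the generation $N_1\vee N_2=N$ are applied with respect to the correct $2$-norm closures.
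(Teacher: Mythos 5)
Your proof is correct, but it follows a genuinely different route from the paper's. The paper does not cite the factoriality of the central sequence algebra $N_{\omega}=N^{\omega}\cap N'$ as a black box; instead it reruns the proof of Takesaki's Theorem XIV.4.18 inside $\mathcal{M}$ itself: choosing an increasing sequence $\{N_n\}$ of finite-dimensional subfactors generating the first tensor factor, it observes that each relative commutant $(\mathcal{R}_0\bar{\otimes}\mathcal{R}_0)\cap(N_n\bar{\otimes}\mathbb{C})'=(\mathcal{R}_0\cap N_n')\bar{\otimes}\mathcal{R}_0$ is a II$_1$ factor, approximates a given trace-zero non-scalar $(a_n)_n\in\mathcal{M}$ in $\|\cdot\|_2$ by elements $b_{k_n}$ of these factors using the conditional expectations $\mathcal{E}_n$, and then uses uniqueness of the trace on these factors (Dixmier averaging) to find unitaries $u_n$ there with $\|b_{k_n}-u_nb_{k_n}u_n^*\|_2\geq\|b_{k_n}\|_2/2$; the sequence $(u_n)_n$ lies in $\mathcal{M}$ and witnesses that $(a_n)_n$ is not central. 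You replace this hands-on averaging by the soft reduction $Z(\mathcal{M})\subseteq Z(N_{\omega})$, obtained from the sandwich $N_{\omega}\subseteq\mathcal{M}\subseteq N^{\omega}\cap N_1'$ together with the inclusion $N_2\subseteq\mathcal{M}$, and then quote the classical statement (which is precisely the theorem the paper adapts, applied to $N\cong\mathcal{R}_0$) that $N_{\omega}$ is a II$_1$ factor. Your route is shorter and isolates the analytic input in a single citation; its only delicate step is the one you flag, namely that the von Neumann algebra generated inside $N^{\omega}$ by the constant copies of $N_1$ and $N_2$ is the constant copy of $N$ --- this holds because the constant-sequence embedding $N\hookrightarrow N^{\omega}$ is trace-preserving, hence normal, so Kaplansky density of the $*$-algebra generated by $N_1\cup N_2$ in $N$ passes to $N^{\omega}$ (on bounded sets, $\sigma$-strong convergence is $\|\cdot\|_2$-convergence, which the embedding preserves). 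What the paper's version buys is self-containedness at the level of proofs and explicit non-commuting unitaries located in relative commutants of finite-dimensional subalgebras; it leaves the type II$_1$ (infinite-dimensionality) point implicit, which you settle explicitly and correctly via the diffuse subalgebra $N_2\cong\mathcal{R}_0$ and finiteness of $\mathcal{M}$ under the ultrapower trace.
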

\begin{proof}
Let $\{N_n\}_{n=1}^\infty$ be an increasing sequence of finite-dimensional subfactors such that 
$\mathcal{R}_{0}=(\bigcup_{n=1}^\infty N_n)''$. 
Since $(\mathcal{R}_0\bar{\otimes}\mathcal{R}_0)\cap (N_n\bar{\otimes}\mathbb{C})'
=(\mathcal{R}_0\cap N_n')\bar{\otimes}\mathcal{R}_0$ is a factor of type II$_1$, the same proof 
as in \cite[Theorem XIV.4.18]{Tak} shows this proposition. 
Indeed, let $(a_n)_n$ be an element in  $\mathcal{M}\setminus \mathbb{C}1_{\mathcal{M}}$.  
By the same way as in \cite[Theorem XIV.4.18]{Tak}, we may assume that 
$\tilde{\tau} (a_n)=0$ for any $n\in\mathbb{N}$ and $\lim_{n\to\infty} \| a_n\|_2>0$ 
where $\| a_n\|_2= \tilde{\tau}(a_n^*a_n)^{1/2}$. 
Using the conditional expectation $\mathcal{E}_n$ from $\mathcal{R}_0\bar{\otimes}\mathcal{R}_0$ 
onto $(\mathcal{R}_0\bar{\otimes}\mathcal{R}_0)\cap (N_n\bar{\otimes}\mathbb{C})'$, 
we can choose $\{k_n\; |\; n\in\mathbb{N}\}\in \omega$ and 
$b_{k_n}\in (\mathcal{R}_0\bar{\otimes}\mathcal{R}_0)\cap (N_n\bar{\otimes}\mathbb{C})'$ 
such that $\tilde{\tau}(b_{k_{n}})=0$ and $\| b_{k_{n}}-a_{k_{n}}\|_2\leq 1/2^{n}$ for any $n\in\mathbb{N}$. 
Since $\tilde{\tau}$ is the unique tracial state on $(\mathcal{R}_0\bar{\otimes}\mathcal{R}_0)\cap 
(N_n\bar{\otimes}\mathbb{C})'$, 
$0=\tilde{\tau}(b_{k_n})\in \overline{\mathrm{co}}
\{ ub_{k_n}u^*\; |\; u\in (\mathcal{R}_0\bar{\otimes}\mathcal{R}_0)\cap (N_n\bar{\otimes}\mathbb{C})',
\; \mathrm{unitary}\}$. 
Therefore there exists a unitary element
 $u_n\in (\mathcal{R}_0\bar{\otimes}\mathcal{R}_0)\cap (N_n\bar{\otimes}\mathbb{C})'$ such that 
$\| b_{k_{n}}- u_n b_{k_n}u_n^*\|_2\geq \| b_{k_n}\|_2/2$ for any $n\in\mathbb{N}$. 
Then we have $(u_n)_n\in \mathcal{M}$ and $\limsup_{n\in\mathbb{N}}\|a_{k_{n}}-u_na_{k_{n}u_n^*}\|_2>0$. 
This shows that $(a_n)_n$ can not be in the center of $\mathcal{M}$.

\end{proof}

The action 
$\tilde{\gamma}=\tilde{\alpha}\otimes \mathrm{id}$ on $\pi_{\tau}(A\otimes\mathcal{W})''\cong
\pi_{\tau_A}(A)''\bar{\otimes}\pi_{\tau_\mathcal{W}}(\mathcal{W})''$ induces an action on $\mathcal{M}$. 
We denote it by the same symbol $\tilde{\gamma}$ for simplicity. 
The following lemma is essentially based on \cite[Proposition 2.1.2]{C2}. 

\begin{lem}\label{lem:central-trivial}
The action $\tilde{\gamma}$ on $\mathcal{M}$ is outer.
\end{lem}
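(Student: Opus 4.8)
The plan is to argue by contradiction, making essential use of the fact that on the $\mathcal{W}$-tensor factor the action is trivial. Write $P:=\pi_{\tau_A}(A)''$ and $Q:=\pi_{\tau_\mathcal{W}}(\mathcal{W})''$, so $\pi_{\tau}(A\otimes\mathcal{W})''\cong P\bar{\otimes}Q$ with $P,Q\cong\mathcal{R}_0$, and under the identification recorded above, $\mathcal{M}\cong (P\bar{\otimes}Q)^{\omega}\cap (P\otimes 1)'$ while $\tilde{\gamma}_g$ becomes $\tilde{\alpha}_g\otimes\mathrm{id}$. Suppose that for some $g\neq\iota$ the automorphism $\tilde{\gamma}_g$ is inner on the type II$_1$ factor $\mathcal{M}$ (Proposition \ref{pro:factor}), and fix a unitary $U\in\mathcal{M}$ with $\tilde{\gamma}_g=\mathrm{Ad}(U)$ on $\mathcal{M}$. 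The goal is to contradict the strong outerness of $\alpha$, i.e.\ the outerness of $\tilde{\alpha}_g$ on $P\cong\mathcal{R}_0$.

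First I would locate $U$ inside the $P$-factor. Since $\tilde{\gamma}_g=\tilde{\alpha}_g\otimes\mathrm{id}$ acts trivially on $Q$, the copy $1\otimes Q^{\omega}$ is contained in $\mathcal{M}$ and is fixed pointwise by $\tilde{\gamma}_g$; hence $U$ commutes with $1\otimes Q^{\omega}$. Combined with $U\in\mathcal{M}\subseteq (P\otimes 1)'$, this places $U$ in $(P\bar{\otimes}Q)^{\omega}\cap (1\otimes Q^{\omega})'\cap (P\otimes 1)'$. The structural point is that the relative commutant of $1\otimes Q^{\omega}$ in the ultrapower equals $P^{\omega}\otimes 1$ (a mixed central sequence in the $Q$-direction would have to be central in $Q^{\omega}$, hence scalar); intersecting further with $(P\otimes 1)'$ then forces $U=(u_n\otimes 1)_n$ for a unitary $(u_n)_n$ in the central sequence algebra $P_{\omega}:=P^{\omega}\cap P'$.

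Next I would read off the implementation on the central sequences of $P$. The subfactor $P_{\omega}\otimes 1$ sits inside $\mathcal{M}$, and there $\tilde{\gamma}_g$ restricts to $(\tilde{\alpha}_g)_{\omega}\otimes\mathrm{id}$, where $(\tilde{\alpha}_g)_{\omega}$ is the induced automorphism of $P_{\omega}$. Conjugating $(y_n\otimes 1)_n$ with $(y_n)_n\in P_{\omega}$ by $U$ yields $\mathrm{Ad}((u_n)_n)=(\tilde{\alpha}_g)_{\omega}$ on $P_{\omega}$, so $(\tilde{\alpha}_g)_{\omega}$ is \emph{inner} on the type II$_1$ factor $P_{\omega}$. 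On the other hand, $\tilde{\alpha}_g$ is an outer automorphism of $P\cong\mathcal{R}_0$ of finite order (as $G$ is finite). For the hyperfinite II$_1$ factor an outer automorphism of finite order is never centrally trivial and in fact acts freely on the central sequence algebra; this is the content I would invoke from \cite[Proposition 2.1.2]{C2} (Connes' $2\times2$ matrix/Rohlin argument). Hence $(\tilde{\alpha}_g)_{\omega}$ is outer on $P_{\omega}$, contradicting the previous sentence. Therefore $\tilde{\gamma}_g$ is outer for every $g\neq\iota$.

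The hard part will be the second step: justifying that the implementing unitary $U$ can be pushed entirely into the central sequences of the $P$-factor, that is, the identification $(P\bar{\otimes}Q)^{\omega}\cap(1\otimes Q^{\omega})'=P^{\omega}\otimes 1$ together with the reduction to $P_{\omega}$. The final appeal to Connes is then essentially a black box, but one must be careful that the \emph{finite order} of $\tilde{\alpha}_g$ is genuinely used there: mere outerness on $\mathcal{R}_0$ (without periodicity, which is what rules out approximate innerness and forces central non-triviality) would not be enough to conclude that $(\tilde{\alpha}_g)_{\omega}$ is outer on $P_{\omega}$.
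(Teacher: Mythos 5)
Your argument reaches the correct conclusion, but by a genuinely different and heavier route than the paper's. The paper never tries to locate an implementing unitary: it observes that if $\tilde{\gamma}_g=\mathrm{Ad}((u_n)_n)$ were inner on $\mathcal{M}$, then every element of $\mathcal{M}$ commuting with $(u_n)_n$ would be fixed by $\tilde{\gamma}_g$, and so it suffices to produce, for an \emph{arbitrary} $(u_n)_n\in\mathcal{M}$, an element of $\mathcal{M}$ commuting with $(u_n)_n$ but moved by $\tilde{\gamma}_g$. Such an element is exhibited directly: by \cite[Theorem XIV.4.16]{Tak} there is a central sequence $(a_n)_n\in\mathcal{R}_0^{\omega}\cap\mathcal{R}_0'$ with $(\tilde{\alpha}_g(a_n))_n\neq(a_n)_n$, and then $(a_n\otimes 1)_n$ commutes with every element of $\mathcal{R}_0\bar{\otimes}\mathcal{R}_0$, so a suitable subsequence of it commutes with $(u_n)_n$ while still being moved. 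This bypasses both of the nontrivial facts on which your proof rests.

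Those two facts are true, so your proof can be completed, but both need more than you give them. First, the identity $(P\bar{\otimes}Q)^{\omega}\cap(1\otimes Q^{\omega})'=P^{\omega}\otimes 1$ is correct, but your parenthetical is not a proof; the standard argument is by averaging: since $Q$ is a factor, the trace-preserving conditional expectation $E$ onto $P\otimes 1=(1\otimes Q)'\cap(P\bar{\otimes}Q)$ satisfies $E(x)\in\overline{\mathrm{co}}^{\|\cdot\|_2}\{(1\otimes u)x(1\otimes u)^*\;|\;u\in\mathcal{U}(Q)\}$, whence $\|x-E(x)\|_2\leq\sup_{u}\|[x,1\otimes u]\|_2$, and applying this termwise to a representing sequence pushes any element commuting with $1\otimes Q^{\omega}$ into $P^{\omega}\otimes 1$. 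Second, the black box that $(\tilde{\alpha}_g)_{\omega}$ is outer on $P_{\omega}$ is true, but your closing caveat is backwards: finite order is \emph{not} what makes it work. Central non-triviality of $\tilde{\alpha}_g$ follows from outerness alone (\cite[Theorem XIV.4.16]{Tak} has no periodicity hypothesis, and this is exactly what the paper invokes), and periodicity certainly does not ``rule out approximate innerness,'' since every automorphism of $\mathcal{R}_0$ is approximately inner. Moreover, the elementary way to upgrade ``not centrally trivial'' to ``not inner on $P_{\omega}$'' is precisely the paper's reindexing trick: given a candidate implementing unitary $(u_n)_n\in P_{\omega}$, pass to a subsequence of a moved central sequence so that it commutes with $(u_n)_n$. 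So if you prove your black box rather than cite it, the paper's entire proof reappears inside yours one level down. What your route buys is the sharper structural statement that an implementing unitary would have to lie in $P_{\omega}\otimes 1$; what the paper's route buys is brevity and strictly weaker input.
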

\begin{proof}
It is enough to show that for any element $(u_n)_n$ in 
$(\mathcal{R}_0\bar{\otimes}\mathcal{R}_0)^{\omega}\cap(\mathcal{R}_0\bar{\otimes}\mathbb{C})'$, 
there exists an element $(x_n)_n$ in 
$(\mathcal{R}_0\bar{\otimes}\mathcal{R}_0)^{\omega}\cap(\mathcal{R}_0\bar{\otimes}\mathbb{C})'$ 
such that $(\tilde{\gamma}(x_n))_n\neq (x_n)_n$ and $[(x_n)_n, (u_n)_n]=0$. 

Let $(u_n)_n$ be an element in $(\mathcal{R}_0\bar{\otimes}\mathcal{R}_0)^{\omega}\cap
(\mathcal{R}_0\bar{\otimes}\mathbb{C})'$. By \cite[Theorem XIV.4.16]{Tak}, there exists an 
element $(a_n)_n$ in $\mathcal{R}_0^{\omega}\cap \mathcal{R}_0'$ such that 
$(\tilde{\alpha}(a_n))_n\neq (a_n)_n$ because $\tilde{\alpha}$ is outer and $\mathcal{R}_0$ is 
the AFD II$_1$ factor. 
Put $(x_n)_n:= (a_n\otimes 1_{\mathcal{R}_0})_n$ in 
$(\mathcal{R}_0\bar{\otimes}\mathcal{R}_0)^{\omega}$. 
Then $(\tilde{\gamma}(x_n))_n\neq (x_n)_n$ and $[(x_n)_n ,y]=0$ for any $y\in \mathcal{R}_0
\bar{\otimes}\mathcal{R}_0$. Taking a suitable subsequence of $(x_n)_n$, we obtain the conclusion. 
\end{proof}

By Proposition \ref{pro:factor} and Lemma \ref{lem:central-trivial}, we obtain the following proposition. 

\begin{pro}\label{pro:fixed-factor}
The fixed point algebra $\mathcal{M}^{\tilde{\gamma}}$ is a factor of type II$_1$. 
\end{pro}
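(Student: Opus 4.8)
The plan is to combine the two preceding results in the standard way: we have just shown in Proposition \ref{pro:factor} that $\mathcal{M}$ is a type II$_1$ factor, and in Lemma \ref{lem:central-trivial} that the induced action $\tilde{\gamma}$ of the finite group $G$ on $\mathcal{M}$ is outer. The goal is to upgrade outerness to the statement that the fixed point algebra $\mathcal{M}^{\tilde{\gamma}}$ is again a type II$_1$ factor. Since $G$ is finite and acts on a II$_1$ factor $\mathcal{M}$, the crossed product $\mathcal{M}\rtimes_{\tilde{\gamma}}G$ is a finite von Neumann algebra, and the relevant classical fact is that for an outer action of a finite (or more generally discrete) group on a II$_1$ factor, both the fixed point algebra and the crossed product are II$_1$ factors. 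So the first thing I would do is invoke this: outerness of $\tilde{\gamma}$ means that $\tilde{\gamma}_g$ is not inner for any $g\neq\iota$, and for finite group actions this is equivalent to the action being \emph{free} in the sense that the crossed product $\mathcal{M}\rtimes_{\tilde{\gamma}}G$ is a factor.

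The key step is the following chain of equivalences, which I would either cite or sketch. For a finite group acting on a II$_1$ factor $\mathcal{M}$, the center of the fixed point algebra $\mathcal{M}^{\tilde{\gamma}}$ is controlled by the action: concretely, one can realize $\mathcal{M}^{\tilde{\gamma}}$ as a reduced algebra of the crossed product, $\mathcal{M}^{\tilde{\gamma}}\cong e(\mathcal{M}\rtimes_{\tilde{\gamma}}G)e$ where $e=\frac{1}{|G|}\sum_{g\in G}\lambda_g$ is the projection onto the $G$-invariant vectors. A corner of a factor is again a factor, so it suffices to show that $\mathcal{M}\rtimes_{\tilde{\gamma}}G$ is a factor. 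The center of the crossed product, for $G$ finite, is spanned by central elements supported on conjugacy classes, and outerness forces these to collapse to the scalars: the standard computation shows that if $\sum_g m_g\lambda_g$ is central then each $m_g$ satisfies $m_g\tilde{\gamma}_g(x)=xm_g$ for all $x\in\mathcal{M}$, and outerness of $\tilde{\gamma}_g$ for $g\neq\iota$ (together with $\mathcal{M}$ being a factor) forces $m_g=0$ for $g\neq\iota$ and $m_\iota\in\mathbb{C}$. This is precisely the content of the classical result on outer actions of discrete groups on factors, and I would cite a standard reference (for instance the relevant sections of Takesaki's volumes, in the same spirit as the citations \cite{Tak} already used in the proofs above).

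Alternatively, and perhaps more cleanly given the tools already established, I would argue directly that the center of $\mathcal{M}^{\tilde{\gamma}}$ is trivial. Let $z$ be a central element of $\mathcal{M}^{\tilde{\gamma}}$; I want to show $z\in\mathbb{C}1$. The difficulty, and what I expect to be the main obstacle, is that $z$ is a priori only central in the \emph{fixed point} algebra, not in all of $\mathcal{M}$, so I cannot immediately apply factoriality of $\mathcal{M}$ (Proposition \ref{pro:factor}). The way around this is to use outerness to show that the commutant of $\mathcal{M}^{\tilde{\gamma}}$ inside $\mathcal{M}$ is generated by $\mathcal{M}^{\tilde{\gamma}}$ itself together with the implementing structure, so that a central element of $\mathcal{M}^{\tilde{\gamma}}$ that also commutes with the $G$-action must lie in the center of the crossed product. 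Concretely, one exploits that $z$ being $\tilde{\gamma}$-invariant and central in $\mathcal{M}^{\tilde{\gamma}}$, combined with outerness, rigidly constrains $z$: this is exactly the Galois-correspondence-type argument for finite group actions on factors.

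In summary, the route is short: (i) record that $\tilde{\gamma}$ is an outer action of the finite group $G$ on the II$_1$ factor $\mathcal{M}$ (from Proposition \ref{pro:factor} and Lemma \ref{lem:central-trivial}); (ii) invoke the standard result that an outer action of a finite group on a II$_1$ factor has factorial fixed point algebra, equivalently that the crossed product is a factor and $\mathcal{M}^{\tilde{\gamma}}$ is the corner $e(\mathcal{M}\rtimes_{\tilde{\gamma}}G)e$; and (iii) conclude that $\mathcal{M}^{\tilde{\gamma}}$ is a type II$_1$ factor, the type being inherited since it is a finite von Neumann algebra with separable predual and no minimal projections. The main subtlety to handle carefully is the passage from ``outer'' to ``the fixed point algebra has trivial center'', which is where I would lean on the classical crossed-product computation rather than trying to reprove factoriality from scratch.
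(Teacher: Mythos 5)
Your proposal is correct and takes essentially the same route as the paper: the paper deduces Proposition \ref{pro:fixed-factor} directly from Proposition \ref{pro:factor} and Lemma \ref{lem:central-trivial} with no further argument, i.e., it implicitly invokes exactly the classical fact you spell out (an outer action of a finite group on a II$_1$ factor has factorial fixed point algebra, realized as the corner $e(\mathcal{M}\rtimes_{\tilde{\gamma}}G)e$ of the factor $\mathcal{M}\rtimes_{\tilde{\gamma}}G$). One small slip in your closing remark: $\mathcal{M}$ is an ultrapower-type algebra and does \emph{not} have separable predual, but this is harmless since separability plays no role --- finiteness (the restricted trace) together with the absence of minimal projections already forces type II$_1$.
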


Define a homomorphism $\Phi$ from $M(A\otimes\mathcal{W})^{\omega}$ to $M$ by 
$\Phi ((x_n)_n):= (\overline{\pi}_{\tau}(x_n))_n$. 
By Kaplansky's density theorem, we see that $\Phi |_{(A\otimes\mathcal{W})^{\omega}}$ is surjective. 
It is easy to see that $\Phi$ maps $\mathcal{A}$ into  $\mathcal{M}$. 
The following proposition is essentially based on \cite[Theorem 3.3]{KR} and \cite[Theorem 3.1]{MS3}. 

\begin{pro}
The restriction $\Phi|_{\mathcal{A}} : \mathcal{A}\to \mathcal{M}$ is surjective. 
\end{pro}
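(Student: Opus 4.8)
The plan is to realize $\Phi|_{\mathcal{A}}$ as a relative commutant lifting map and to deduce surjectivity from the fact that the trace-kernel ideal is a $\sigma$-ideal in the sense of Kirchberg. Write $E:=(A\otimes\mathcal{W})^{\omega}$ and let $J:=\{(y_n)_n\in E\;|\;\lim_{n\to\omega}\tau(y_n^*y_n)=0\}$. Since $\tilde{\tau}(\overline{\pi}_{\tau}(y)^*\overline{\pi}_{\tau}(y))=\tau(y^*y)$ for $y\in A\otimes\mathcal{W}$, one checks that $J$ is exactly the kernel of $\Phi|_E$; combined with the surjectivity of $\Phi|_{E}$ recorded just above (Kaplansky density), this identifies $E/J\cong M$ with $\Phi$ as the quotient map. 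Under this identification $\Pi(A)$ is the image of the separable C$^*$-subalgebra $A\otimes 1_{\mathcal{W}^{\sim}}\subset E$, and $\mathcal{A}=E\cap (A\otimes 1_{\mathcal{W}^{\sim}})'$, $\mathcal{M}=M\cap\Pi(A)'$. Since $\Phi(\mathcal{A})\subseteq\mathcal{M}$ is already known, only surjectivity remains.

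First I would fix $m\in\mathcal{M}$ and, using surjectivity of $\Phi|_E$, choose a bounded representative $(y_n)_n\in E$ with $\Phi((y_n)_n)=m$. Because $m$ commutes with $\Pi(a)$ for every $a\in A$, the commutator $([y_n,a\otimes 1_{\mathcal{W}^{\sim}}])_n$ lies in $\ker\Phi=J$ for each $a$; that is, the obstruction to $(y_n)_n$ belonging to $\mathcal{A}$ is entirely contained in the trace-kernel ideal $J$. Let $D\subset E$ be the separable C$^*$-subalgebra generated by $A\otimes 1_{\mathcal{W}^{\sim}}$ and $(y_n)_n$, so that all these commutators lie in $J\cap D$.

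The key step is to invoke the $\sigma$-ideal property of $J$, following \cite[Theorem 3.3]{KR} and \cite[Theorem 3.1]{MS3}: there exists a positive contraction $e\in J\cap E\cap D'$ satisfying $ed=d$ for every $d\in J\cap D$. Granting this, put $x_n:=y_n-e_ny_n$, i.e.\ $(x_n)_n=(y_n)_n-e(y_n)_n$. Since $e$ commutes with $a\otimes 1_{\mathcal{W}^{\sim}}$ and acts as a unit on the commutators, which lie in $J\cap D$, we obtain $[(x_n)_n,a\otimes 1_{\mathcal{W}^{\sim}}]=[(y_n)_n,a\otimes 1_{\mathcal{W}^{\sim}}]-e\,[(y_n)_n,a\otimes 1_{\mathcal{W}^{\sim}}]=0$ for all $a$, so $(x_n)_n\in\mathcal{A}$; moreover $\Phi((x_n)_n)=\Phi((y_n)_n)-\Phi(e)\Phi((y_n)_n)=m$ because $\Phi(e)=0$. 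This produces the required preimage and establishes surjectivity.

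The main obstacle is the third step, namely verifying that $J$ is a $\sigma$-ideal, equivalently producing the quasicentral positive contraction $e\in J\cap D'$ that acts as a unit on $J\cap D$. This is where the structure of the tracial ultrapower $M$ enters: fixing a dense sequence in $J\cap D$ together with a countable generating set of $D$, one runs a diagonal/reindexing argument across the ultrafilter $\omega$, using that the trace-kernel ideal admits approximate units converging to the unit in the $2$-norm and commuting asymptotically with $D$. The arguments of \cite[Theorem 3.3]{KR} and \cite[Theorem 3.1]{MS3} supply precisely this, and all the separability required is guaranteed by separability of $A$ and $\mathcal{W}$.
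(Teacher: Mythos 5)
Your argument is correct, but note that it is the paper's own proof with the ultrafilter work repackaged. The paper proceeds inline: it lifts $x\in\mathcal{M}$ through the surjection $\Phi|_{(A\otimes\mathcal{W})^{\omega}}$, forms the separable C$^*$-subalgebra $D\subset M(A\otimes\mathcal{W})^{\omega}$ generated by the lift and $\{a\otimes 1_{\mathcal{W}^{\sim}}\;|\;a\in A\}$, takes an approximate unit $\{e_k\}$ of $I=\ker\Phi|_{D}$ quasicentral for $D$ (this needs only Arveson's theorem, since $D$ is separable), compresses two-sidedly to get $\|[(1-e_k)(x_n)_n(1-e_k),a\otimes 1_{\mathcal{W}^{\sim}}]\|\to 0$, and only then invokes the reindexing arguments of \cite{KR} and \cite{MS3} to turn asymptotic commutation into an exact element of $\mathcal{A}$ with the same image. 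You front-load that diagonalization into the statement that the trace-kernel ideal $J$ is a $\sigma$-ideal in the sense of \cite{Kir2}, and the payoff is a cleaner endgame: the one-sided correction $(1-e)(y_n)_n$ commutes exactly with every $a\otimes 1_{\mathcal{W}^{\sim}}$, with no final diagonal argument. Since the $\sigma$-ideal property is itself proved by precisely the quasicentral-approximate-unit-plus-$\varepsilon$-test argument the paper runs by hand, the two proofs coincide in substance; yours isolates a reusable black box, the paper's avoids stating it. One slip to repair: $A\otimes 1_{\mathcal{W}^{\sim}}$ is \emph{not} contained in $E=(A\otimes\mathcal{W})^{\omega}$ but in $M(A\otimes\mathcal{W})^{\omega}$, so $D$ must be formed there (as the paper does) and the $\sigma$-ideal property must be invoked for $J$ as an ideal of $M(A\otimes\mathcal{W})^{\omega}$ --- this is legitimate, because the unique trace extends to $M(A\otimes\mathcal{W})$, making $J$ a closed two-sided ideal of the multiplier ultrapower, and the standard proof goes through verbatim.
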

\begin{proof}
Let $x$ be a contraction in $\mathcal{M}$. Since $\Phi |_{(A\otimes\mathcal{W})^{\omega}}$ 
is surjective, there exists a contraction $(x_n)_n$ in 
$(A\otimes\mathcal{W})^{\omega}$ such that $\Phi ((x_n)_n)=x$. 
Let $D$ be a C$^*$-subalgebra of  $M(A\otimes\mathcal{W})^{\omega}$ generated by 
$(x_n)_n$ and $\{ a\otimes1_{\mathcal{W}^{\sim}} \; |\; a\in A\} $, and 
put $I:=\mathrm{ker}\; \Phi|_{D}$. 
Then the rest of proof is same as the proof of \cite[Theorem 3.1]{MS3}. 
Indeed, let $\{e_{k}\}_{k\in\mathbb{N}}$ be an approximate unit for $I$ which is 
quasicentral for $D$. (Note that $D$ is separable.) 
Since $[(x_n)_n, a\otimes1_{\mathcal{W}^{\sim}}]\in I$, we have 
\begin{align*}
0
& =\lim_{k\to\infty} \| (1-e_{k})[(x_n)_n, a\otimes1_{\mathcal{W}^{\sim}}] (1-e_{k})\| \\
& =\lim_{k\to\infty} \|[(1-e_{k})(x_n)_n(1-e_{k}), a\otimes 1_{\mathcal{W}^{\sim}}]\|
\end{align*}
for any $a\in A$. 
Then we obtain the conclusion by usual arguments 
(see the proof of \cite[Theorem 3.3]{KR} and \cite[Theorem 3.1]{MS3}). 
\end{proof}

Let $\{h_n\}_{n\in\mathbb{N}}$ be an approximate unit for $A$. 
Since $\lim_{n\to \infty} \tau (h_n\otimes 1_{\mathcal{W}})=1$, 
a similar argument as in the proof of \cite[Proposition 2.1]{Na3} shows 
 $\mathcal{I}\subset \mathrm{ker}\; \Phi|_{\mathcal{A}}$. 
Therefore  $\Phi|_{\mathcal{A}}$ induces a surjective homomorphism $\varrho$ from 
$\mathcal{B}$ to $\mathcal{M}$. 
Since $\gamma$ is an action of a finite group, it is easy to show the following proposition. 

\begin{pro}\label{pro:surjective}
The restriction $\varrho|_{\mathcal{B}^{\gamma}}: \mathcal{B}^{\gamma} \to 
\mathcal{M}^{\tilde{\gamma}}$ is surjective. 
\end{pro}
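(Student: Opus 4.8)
The plan is to use the standard averaging argument over the finite group $G$, exploiting the surjectivity of $\varrho$ that has just been established. The essential preliminary observation is that $\varrho$ is \emph{equivariant}, that is, $\varrho \circ \gamma_g = \tilde{\gamma}_g \circ \varrho$ for every $g\in G$. I expect this to follow by unwinding the construction: $\Phi$ sends $(x_n)_n$ to $(\overline{\pi}_\tau(x_n))_n$, and since $\tilde{\gamma}=\tilde{\alpha}\otimes\mathrm{id}$ is by definition the canonical extension of $\gamma=\alpha\otimes\mathrm{id}$ to $\pi_\tau(A\otimes\mathcal{W})''$, one has $\overline{\pi}_\tau(\gamma_g(x_n))=\tilde{\gamma}_g(\overline{\pi}_\tau(x_n))$. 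Hence $\Phi$ intertwines the two actions, and this property descends through $\mathcal{A}$ and the quotient $\mathcal{B}=\mathcal{A}/\mathcal{I}$ to $\varrho$.

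Granting equivariance, I would argue as follows. Fix $y\in\mathcal{M}^{\tilde{\gamma}}$. By the previous proposition $\varrho$ is surjective, so choose $x\in\mathcal{B}$ with $\varrho(x)=y$. Since $G$ is finite, set
$$
\bar{x}:=\frac{1}{|G|}\sum_{g\in G}\gamma_g(x).
$$
Then $\bar{x}$ is fixed by every $\gamma_h$, because $g\mapsto hg$ merely permutes $G$, so $\bar{x}\in\mathcal{B}^{\gamma}$. Applying $\varrho$ and using equivariance together with $\tilde{\gamma}_g(y)=y$,
$$
\varrho(\bar{x})=\frac{1}{|G|}\sum_{g\in G}\varrho(\gamma_g(x))=\frac{1}{|G|}\sum_{g\in G}\tilde{\gamma}_g(\varrho(x))=\frac{1}{|G|}\sum_{g\in G}\tilde{\gamma}_g(y)=y.
$$
Thus $y$ lies in the image of $\varrho|_{\mathcal{B}^{\gamma}}$, which gives the desired surjectivity.

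The argument is routine once equivariance is in hand, so the only point requiring genuine care is confirming that the C$^*$-algebraic action $\gamma$ on $\mathcal{B}$ and the von Neumann algebraic action $\tilde{\gamma}$ on $\mathcal{M}$ are genuinely intertwined by $\varrho$ — that is, that passing from $A\otimes\mathcal{W}$ to its weak closure and then to the relevant relative commutants does not disturb the compatibility of the two actions. This is precisely the compatibility that the notation is designed to encode (the same symbol $\gamma$ being used throughout), so I expect it to hold by construction; nevertheless it is the step I would write out explicitly before invoking the averaging trick, since the whole proof hinges on it.
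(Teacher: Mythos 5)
Your proof is correct and is exactly the argument the paper intends: the paper omits the proof, remarking only that ``since $\gamma$ is an action of a finite group, it is easy,'' which points precisely at your equivariance-plus-averaging trick, with equivariance of $\varrho$ holding by construction since $\Phi((x_n)_n)=(\overline{\pi}_{\tau}(x_n))_n$ intertwines $\gamma$ with $\tilde{\gamma}$ at the level of representatives and this descends through $\mathcal{A}$ and the quotient $\mathcal{B}=\mathcal{A}/\mathcal{I}$.
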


The following lemma is essentially based on \cite[Lemma 3.2]{MS3}. 
This lemma may be considered that a homomorphism $a\mapsto a\otimes 1_{\mathcal{W}^{\sim}}$ 
from $A$ to $M(A\otimes\mathcal{W})^{\omega}$ has ``property (SI) with respect to 
$A\otimes\mathcal{W}$''.

\begin{lem}\label{lem:A-SI}
Let $(x_n)_n$ and $(y_n)_n$ be positive contractions in $\mathcal{A}$ such that  
$$
\lim_{n\to\omega} \tau (x_n)=0 \quad \text{and} \quad \inf_{m\in\mathbb{N}}\lim_{n\to \omega} 
\tau (y_n^m)>0.
$$
Then there exists an element $(s_n)_n$ in $\mathcal{A}$ such that  
$(s_n^*s_n)_n=(x_n)_n$ and $(y_ns_n)_n=(s_n)_n$. 
\end{lem}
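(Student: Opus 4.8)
The plan is to establish this as a relative form of property (SI) for the inclusion $A\cong A\otimes 1_{\mathcal{W}^{\sim}}\hookrightarrow A\otimes\mathcal{W}$, following the proof of \cite[Lemma 3.2]{MS3} but carried out entirely inside the relative commutant $\mathcal{A}$. First I would record the tracial data. Set $\delta:=\inf_{m\in\mathbb{N}}\lim_{n\to\omega}\tau(y_n^m)$, which is positive by hypothesis, so that $\Phi((y_n)_n)\in\mathcal{M}$ dominates a projection of trace at least $\delta$. The decisive observation on the $(x_n)_n$ side is that the condition $\lim_{n\to\omega}\tau(x_n)=0$ forces the support of $(x_n)_n$ to be tracially negligible: Jensen's inequality applied to the spectral measure gives $\tau(x_n^{1/m})\leq\tau(x_n)^{1/m}$, hence $\lim_{n\to\omega}\tau(x_n^{1/m})=0$ for every $m$, and therefore $d_{\tau_{\omega}}((x_n)_n)=0$ for the limit trace $\tau_{\omega}=\lim_{n\to\omega}\tau$. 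Thus we are exactly in the configuration governed by property (SI): a source of zero tracial dimension is to be embedded, with the \emph{exact} relations $s^*s=x$ and $ys=s$, into the hereditary part where $(y_n)_n$ is close to $1$.

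The construction is local-to-global, as in \cite[Lemma 3.2]{MS3}. For each finite $F\subset A_{+,1}$ and each $\varepsilon>0$ I would produce, for $\omega$-many $n$, an element $s\in A\otimes\mathcal{W}$ with $\|s^*s-x_n\|<\varepsilon$, $\|y_ns-s\|<\varepsilon$, and $\|[s,a\otimes 1_{\mathcal{W}^{\sim}}]\|<\varepsilon$ for all $a\in F$; a reindexation over a countable cofinal family of pairs $(F,\varepsilon)$ then yields $(s_n)_n\in\mathcal{A}$ with $(s_n^*s_n)_n=(x_n)_n$ and $(y_ns_n)_n=(s_n)_n$. To build the local $s$ I would run the Matui--Sato argument: since $d_{\tau_{\omega}}((x_n)_n)=0<\delta$, the comparison coming from the unique limit trace makes the (negligible) support of $(x_n)_n$ Cuntz-subordinate, with room to spare, to the hereditary subalgebra on which $(y_n)_n$ is near $1$; then $\mathcal{Z}$-stability of $A\otimes\mathcal{W}$ (equivalently, property (SI) of $A\otimes\mathcal{W}$, noted before Proposition \ref{thm:Matui-Sato}) supplies the central room needed to upgrade this subequivalence to the exact relations above.

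The main obstacle is to carry out this construction \emph{within} $\mathcal{A}$, i.e.\ to ensure that $s$ approximately commutes with $A\otimes 1_{\mathcal{W}^{\sim}}$ in norm, since a trace-kernel cut-down would not control $\|s^*s-x_n\|$. Two features make this possible. First, the room furnished by $\mathcal{Z}$-stability may be taken central: the matrix embeddings of Proposition \ref{pro:key-pro}(i) lift to central sequences of $A\otimes\mathcal{W}$, which commute with all of $A\otimes\mathcal{W}$ and in particular lie in $\mathcal{A}$. Second, the required comparison — that the tracially negligible support of $(x_n)_n$ embeds, with room, into the region where $(y_n)_n\approx 1$ — can be realized inside $\mathcal{A}$ because $\mathcal{M}=\mathcal{A}/\ker(\Phi|_{\mathcal{A}})$ is a factor of type II$_1$ (Proposition \ref{pro:factor}) and because $\Phi|_{\mathcal{A}}$ is surjective, so von Neumann level comparison lifts to $\mathcal{A}$ via the quasicentral approximate unit device used above in proving surjectivity of $\Phi|_{\mathcal{A}}$. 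The delicate point I would verify carefully is that assembling these pieces produces a genuine element of $\mathcal{A}$ meeting the local approximate relations; it is precisely the vanishing $d_{\tau_{\omega}}((x_n)_n)=0$ (giving infinite comparison room for the source) together with the factoriality of $\mathcal{M}$ that allow the approximate construction to be refined, after reindexation, to the exact identities $(s_n^*s_n)_n=(x_n)_n$ and $(y_ns_n)_n=(s_n)_n$.
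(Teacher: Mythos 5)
Your tracial bookkeeping is correct (Jensen does give $d_{\tau_{\omega}}((x_n)_n)=0$, and the spectral projection of $\Phi((y_n)_n)$ at $1$ has trace at least $\delta$), the local-to-global reindexation over pairs $(F,\varepsilon)$ is standard, and \cite[Lemma 3.2]{MS3} is indeed the right model: the paper's proof follows that lemma together with \cite[Theorem 1.1]{MS} and the non-unital modifications of \cite[Section 5]{Na2}. But the mechanism you propose for the one point you yourself identify as the main obstacle --- producing a local $s$ that satisfies the norm relations \emph{and} approximately commutes with $A\otimes 1_{\mathcal{W}^{\sim}}$ --- cannot work. Since $x_n\geq 0$ and $\lim_{n\to\omega}\tau(x_n)=0$, the image $\Phi((x_n)_n)$ is \emph{zero} in $\mathcal{M}$ (it is a positive element of the tracial ultrapower with zero trace). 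Hence the ``von Neumann level comparison'' you invoke in $\mathcal{M}$ is comparison of $0$ against something, which is vacuous; and lifting it along $\Phi|_{\mathcal{A}}$ (by quasicentral approximate units or otherwise) only determines $s$ modulo $\ker \Phi|_{\mathcal{A}}$, i.e.\ it yields $s^*s-x\in\ker\Phi|_{\mathcal{A}}$, a relation already satisfied by $s=0$. The entire content of the lemma lives inside $\ker\Phi|_{\mathcal{A}}$ --- exactly the trace-kernel issue you flag --- and factoriality of $\mathcal{M}$ plus surjectivity of $\Phi|_{\mathcal{A}}$ give no norm control there. Likewise, property (SI) of $A\otimes\mathcal{W}$ as quoted before Proposition \ref{thm:Matui-Sato} concerns $F(A\otimes\mathcal{W})$, i.e.\ sequences commuting with \emph{all} of $A\otimes\mathcal{W}$; your $(x_n)_n,(y_n)_n$ commute only with $A\otimes 1_{\mathcal{W}^{\sim}}$, so it cannot be applied, and the unital matrix embeddings of Proposition \ref{pro:key-pro}(i) do not produce the required element $s$ either.

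The missing idea --- the one that actually drives \cite[Lemma 3.2]{MS3}, \cite[Theorem 1.1]{MS}, and the paper's proof --- is pure-state excision. Choose a pure state $\varphi$ on $A$ and extend it to $\tilde{\varphi}$ on $A^{\sim}$; since $A$ may be assumed non-type I, $\pi_{\tilde{\varphi}}(A^{\sim})$ contains no nonzero compact operators, so \cite[Proposition 5.9]{KR} approximates $\mathrm{id}_{A^{\sim}}$ pointwise in norm by c.p.\ maps $\psi(a)=\sum_{i,j}\tilde{\varphi}(d_i^*ad_j)c_i^*c_j$. Using strict comparison of the C$^*$-algebra $A\otimes\mathcal{W}$ (not of $\mathcal{M}$), one then constructs, as in \cite[Lemma 3.2]{MS3} with \cite[Lemma 5.7]{Na2}, a sequence $(s_n)_n$ in $A\otimes\mathcal{W}$ satisfying the \emph{weighted} relations $(y_ns_n)_n=(s_n)_n$ and $(s_n^*(a\otimes 1_{\mathcal{W}^{\sim}})s_n)_n=((a\otimes 1_{\mathcal{W}^{\sim}})x_n)_n$ for all $a\in A^{\sim}$; the scalars $\tilde{\varphi}(d_i^*ad_j)$ produced by excision are precisely what convert commutation with $A\otimes 1_{\mathcal{W}^{\sim}}$ into a checkable norm estimate. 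These weighted relations are the right target because they imply the lemma: taking $a=1$ gives $(s_n^*s_n)_n=(x_n)_n$, and for $b=a\otimes 1_{\mathcal{W}^{\sim}}$ with $a\geq 0$, writing $s=(s_n)_n$ and $x=(x_n)_n$, one computes
$$
[s,b]^*[s,b]=bs^*sb-bs^*bs-s^*bsb+s^*b^2s=bxb-b(bx)-(bx)b+b^2x=0
$$
using $s^*bs=bx$, $s^*b^2s=b^2x$ and $bx=xb$, so $(s_n)_n\in\mathcal{A}$ comes for free. Without this excision step your local construction has no way to secure the commutator estimates, and the proposal as it stands does not prove the lemma.
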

\begin{proof}
Similar arguments as in the proofs of \cite[Lemma 3.2]{MS3} and \cite[Theorem 1.1]{MS} with some 
modifications in \cite[Section 5]{Na2} show this lemma. Indeed, let $\varphi$ 
be a pure state on $A$. We can uniquely extend $\varphi$  to a pure state $\tilde{\varphi}$ on 
$A^{\sim}$. Since we may assume that $A$ is a (separable simple) non-type I C$^*$-algebra, 
$K(H_{\tilde{\varphi}})\cap \pi_{\tilde{\varphi}}(A^{\sim})=\{0\}$. 
Therefore \cite[Proposition 5.9]{KR} implies that the identity map on $A^{\sim}$ can be approximated 
in the pointwise norm topology by a completely positive map $\psi$ of the form 
$$
\psi (a)= \sum_{i,j=1}\tilde{\varphi}(d_i^*ad_j)c_i^*c_j, \quad a\in A^{\sim},
$$ 
where $c_i,d_i\in A^{\sim}$. 
Note that $\sum_{i,j=1}\tilde{\varphi}(d_i^*ad_j)(c_i^*c_j\otimes 1_{\mathcal{W}^{\sim}})x_n$ is an element 
in $A\otimes\mathcal{W}$. 
Since $A\otimes\mathcal{W}$ has strict comparison, a similar argument as in the proof of 
\cite[Lemma 3.2]{MS3} (we need to use \cite[Lemma 5.7]{Na2}) shows that there exists a sequence of 
$(s_n)_n$ in $A\otimes\mathcal{W}$ such that $(f_ns_n)_n=(s_n)_n$ and 
$(s_n^*(a\otimes 1_{\mathcal{W}^{\sim}})s_n)_n= ((a\otimes 1_{\mathcal{W}^{\sim}})e_n)$ in 
$(A\otimes\mathcal{W})^{\omega}$ for any $a\in A^{\sim}$. Therefore we obtain the conclusion 
(see \cite[Remark 5.5]{Na2}). 
\end{proof}

For any $[(x_n)_n]\in\mathcal{B}$, let $\tau_{\mathcal{B}}([(x_n)_n]):=\lim_{n\to\omega}\tau (x_n)$. 
By a similar argument as in the proof of \cite[Proposition 2.1]{Na3}, $\tau_{\mathcal{B}}$ is a well 
defined tracial state on $\mathcal{B}$. 
The following proposition is essentially based on \cite[Proposition 4.5]{MS2}. 
See also the proof of \cite[Theorem 4.7]{MS1}.

\begin{pro}\label{pro:target-si}
Let $x$ and $y$ be positive contractions in $\mathcal{B}^{\gamma}$ such that 
$$
\tau_{\mathcal{B}}(x)=0 \quad \text{and} \quad \inf_{m\in\mathbb{N}}\tau_{\mathcal{B}}(y^m)>0.
$$
Then there exists an element $s$ in $\mathcal{B}^{\gamma}$ such that 
$s^*s=x$ and $ys=s$.  
\end{pro}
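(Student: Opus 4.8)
The plan is to reduce to the non-equivariant property (SI) of Lemma~\ref{lem:A-SI} and then enforce $\gamma$-invariance by means of the weak Rohlin property. First, since $\tilde\alpha_g$ is non-inner on $\pi_{\tau_A}(A)''$ for $g\neq\iota$, the automorphism $\tilde\gamma_g=\tilde\alpha_g\otimes\mathrm{id}$ is non-inner on $\pi_{\tau_A}(A)''\bar\otimes\pi_{\tau_\mathcal{W}}(\mathcal{W})''$, so $\gamma$ is strongly outer and hence has the weak Rohlin property by Theorem~\ref{thm:weak-rohlin}. I would fix a positive contraction $f$ with $\gamma_g(f)\gamma_h(f)=0$ for $g\neq h$ and $\tau_{\mathcal{B}}(f)=1/|G|$, viewed inside $\mathcal{B}$, and by a standard reindexing argument arrange that $f$ commutes with $x$, $y$ and with the element $t$ produced below. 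Writing $f_g:=\gamma_g(f)$, these are mutually orthogonal positive contractions and $e:=\sum_{g\in G}f_g$ is a $\gamma$-invariant positive contraction with $\tau_{\mathcal{B}}(e)=1$.

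Next I would lift $x$ and $y$ to positive contractions $(x_n)_n,(y_n)_n$ in $\mathcal{A}$; the hypotheses become $\lim_{n\to\omega}\tau(x_n)=0$ and $\inf_m\lim_{n\to\omega}\tau(y_n^m)>0$, so Lemma~\ref{lem:A-SI} produces $t\in\mathcal{B}$ with $t^*t=x$ and $yt=t$. Since $x$ and $y$ are $\gamma$-invariant, each $\gamma_g(t)$ still satisfies $\gamma_g(t)^*\gamma_g(t)=x$ and $y\gamma_g(t)=\gamma_g(t)$. Setting $s:=\sum_{g\in G}f_g^{1/2}\gamma_g(t)=\sum_{g\in G}\gamma_g(f^{1/2}t)$ gives an element of $\mathcal{B}^{\gamma}$ (invariance is immediate from the index shift), and since $f_g^{1/2}f_h^{1/2}=0$ for $g\neq h$ all cross terms vanish, whence $s^*s=\sum_g\gamma_g(t^*ft)=\sum_g\gamma_g(fx)=\bigl(\sum_g\gamma_g(f)\bigr)x=ex$, while $ys=s$ follows from $yt=t$ and the invariance of $y$.

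The main obstacle is that $e\neq1$ in general, so this naive symmetrization yields only $s^*s=ex$ instead of $s^*s=x$; the defect $(1-e)x\geq0$ satisfies $\tau_{\mathcal{B}}((1-e)x)=0$, so it lies in the trace-kernel of $\mathcal{B}^{\gamma}$ and is invisible to the trace, yet it need not vanish. It cannot be absorbed inside the corner $1-e$, because $y$ fails to be large there, so the largeness of $y$ must be exploited globally. I expect the right remedy is to build the invariance directly into the absorption, running the proof of Lemma~\ref{lem:A-SI} equivariantly over the orthogonal tower $\{f_g\}$ --- as in \cite[Proposition~4.5]{MS2} and the proof of \cite[Theorem~4.7]{MS1} --- so that the output $s$ is already $\gamma$-invariant and satisfies $s^*s=x$ on the nose. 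The ingredients that make the equivariant absorption go through are the comparison theory of the fixed-point algebra, namely the existence of projections of arbitrary trace and the Cuntz comparison in $F(A\otimes\mathcal{W})^{\gamma}$ (Propositions~\ref{pro:key-pro} and~\ref{thm:Matui-Sato}), together with the factoriality of and surjection onto the type II$_1$ factor $\mathcal{M}^{\tilde\gamma}$ (Propositions~\ref{pro:fixed-factor} and~\ref{pro:surjective}), which control the trace data of the tower and of $x,y$ needed to invoke Lemma~\ref{lem:A-SI} equivariantly. Reconciling exactness with $\gamma$-equivariance in this step is the crux of the argument.
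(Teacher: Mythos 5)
Your reduction to Lemma \ref{lem:A-SI} plus a Rohlin-tower symmetrization is the right frame, and your diagnosis of why the naive symmetrization fails is exactly right: weighting the translates of $t$ by $\gamma_g(f)^{1/2}$ puts the tower into the \emph{conclusion}, so the diagonal terms sum to $ex$ rather than $x$, and since $\tau_{\mathcal{B}}$ is far from faithful the defect $(1-e)x$ cannot be argued away. But this is where your proposal stops being a proof: ``running the proof of Lemma \ref{lem:A-SI} equivariantly over the tower'' is a hope, not an argument, and the ingredients you list for it are off target --- Propositions \ref{pro:fixed-factor} and \ref{pro:surjective} and the comparison statements are what the paper uses \emph{afterwards}, in Proposition \ref{pro:main-section3}, whose proof relies on the present proposition; they play no role here. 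There is also a secondary gap at the start: a weak-Rohlin tower for $\gamma$ lives in $F(A\otimes\mathcal{W})$, and a representative in $(A\otimes\mathcal{W})_{\omega}$ need not lie in $\mathcal{A}$ (see the remark following Proposition \ref{pro:homrho}), so ``$f$ viewed inside $\mathcal{B}$'' is not automatic; the paper instead takes a tower $(f_n)_n$ in $A_{\omega}$ from the weak Rohlin property of $\alpha$ on $A$ (Theorem \ref{thm:weak-rohlin}) and passes to $(f_n\otimes k_n)_n\in\mathcal{A}$, where $\{k_n\}$ is an approximate unit for $\mathcal{W}$.

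The missing idea is to put the tower into the \emph{hypothesis} rather than the conclusion, i.e., into $y$ rather than into the output. Following \cite[Proposition 4.5]{MS2} (with \cite[Lemma 5.6]{Na2}), one first cuts $y$ down to a positive contraction $\tilde{y}$ with representatives satisfying $(\tilde{y}_n)_n\leq (y_n)_n$, $\inf_{m}\lim_{n\to\omega}\tau(\tilde{y}_n^m)>0$, and $\gamma_g(\tilde{y})\gamma_h(\tilde{y})=0$ in $\mathcal{B}$ for $g\neq h$: cutting down $y$ only costs trace, which the largeness hypothesis of Lemma \ref{lem:A-SI} tolerates, whereas your weighting costs exactness in the conclusion, which is fatal. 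Now apply the non-equivariant Lemma \ref{lem:A-SI} to the pair $(x,\tilde{y})$ to get $r$ with $r^*r=x$ and $\tilde{y}r=r$. The support condition does the orthogonalization for you: $\gamma_g(r)^*\gamma_h(r)=\gamma_g(r)^*\gamma_g(\tilde{y})\gamma_h(\tilde{y})\gamma_h(r)=0$ for $g\neq h$, so $s:=|G|^{-1/2}\sum_{g\in G}\gamma_g(r)$ is $\gamma$-invariant and $s^*s=|G|^{-1}\sum_{g\in G}\gamma_g(r^*r)=|G|^{-1}\sum_{g\in G}\gamma_g(x)=x$ exactly, by invariance of $x$ (note that the correct normalization of the average is $|G|^{-1/2}$). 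Finally, $ys=s$: since $0\leq\tilde{y}\leq y\leq 1$ and $\tilde{y}r=r$, we get $yr=r$, hence $y\gamma_g(r)=\gamma_g(r)$ by invariance of $y$. This is precisely the paper's proof; your citation of \cite[Proposition 4.5]{MS2} points at the right place, but its mechanism is this pre-processing of $y$ followed by a plain average, not an equivariant re-run of the SI argument.
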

\begin{proof}
Let $(x_n)_n$ and $(y_n)_n$ be positive contractions in $\mathcal{A}$ such that 
$x=[(x_n)_n]$ and $y=[(y_n)_n]$. Then we have 
$$
(\gamma_g(x_n)-x_n)_n(a\otimes 1_{\mathcal{W}^{\sim}})=0 \quad \text{and} \quad 
(\gamma_g(y_n)-y_n)_n(a\otimes 1_{\mathcal{W}^{\sim}})=0
$$
for any $a\in A$ and $g\in G$. 
Since $\alpha$ is strongly outer, Theorem \ref{thm:weak-rohlin} implies that there exists 
a positive contraction $(f_n)_n$ in $A_{\omega}$ such that 
$$
(\alpha_g(f_n)\alpha_h(f_n))_na=0\quad \text{and} \quad \lim_{n\to\omega}\tau_A(f_n)=\frac{1}{|G|}
$$ 
for any $a\in A$ and $g,h\in G$ with $g\neq h$. Let $\{k_n\}_{n=1}^\infty$ be an approximate unit 
for $\mathcal{W}$. Then we have $(f_n\otimes k_n)_n\in \mathcal{A}$, 
$$
\lim_{n\to\omega}\gamma_g(f_n\otimes k_n)\gamma_h(f_n\otimes k_n)(a\otimes 1_{\mathcal{W}^{\sim}})
=0 \quad \text{and} \quad \lim_{n\to\omega}\tau (f_n\otimes k_n)=\frac{1}{|G|}
$$
for any $a\in A$ and $g,h\in G$ with $g\neq h$.
Using \cite[Lemma 5.6]{Na2} instead of \cite[Lemma 4.6]{MS}, a similar argument as in the proof of 
\cite[Proposition 4.5]{MS2} shows that there exists a positive contraction $(\tilde{y}_n)_n$ in 
$\mathcal{A}$ such that
$$
(\tilde{y}_n)_n\leq (y_n)_n, \quad \inf_{m\in\mathbb{N}}\lim_{n\to \omega} \tau (\tilde{y}_n^m)>0 \quad 
\text{and} \quad \lim_{n\to\omega}\gamma_g(\tilde{y}_n)\gamma_h(\tilde{y}_n)
(a\otimes 1_{\mathcal{W}^{\sim}})=0 
$$
for any $a\in A$ and $g,h\in G$ with $g\neq h$. By Lemma \ref{lem:A-SI}, there exists an element 
$(r_n)_n$ in $\mathcal{A}$ such that $(r_n^*r_n)_n=(x_n)_n$ and $(\tilde{y}_nr_n)_n=(r_n)_n$. 
Since $(y_n)_n$ is a positive contraction and $(\tilde{y}_n)_n\leq (y_n)_n$, we have 
$(y_nr_n)_n=(r_n)_n$. 
Put 
$$
(s_n)_n:= \frac{1}{|G|}\sum_{g\in G} \gamma_g ((r_n)_n)\in\mathcal{A} .
$$
Then we have 
$$
(\gamma_g(s_n)-s_n)_n=0, \quad  
(s_n^*s_n-x_n)_n(a\otimes 1_{\mathcal{W}^{\sim}})=0 \quad
\text{and} \quad 
(y_ns_n-s_n)_n(a\otimes 1_{\mathcal{W}^{\sim}})=0
$$ 
for any $a\in A$ and $g\in G$. Therefore, putting $s:= [(s_n)_n]\in \mathcal{B}^{\gamma}$, we obtain the conclusion. 
\end{proof}

The following proposition is essentially based on \cite[Proposition 4.8]{MS2} and 
\cite[Proposition 3.3]{MS3}. 

\begin{pro}\label{pro:main-section3}
(i) $\tau_{\mathcal{B}}$ is the unique tracial state on $\mathcal{B}^{\gamma}$. \ \\
(ii) $\mathcal{B}^{\gamma}$ has strict comparison. 
\end{pro}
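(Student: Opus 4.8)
The plan is to exploit the two structural facts already established: the restriction $\varrho|_{\mathcal{B}^{\gamma}}\colon\mathcal{B}^{\gamma}\to\mathcal{M}^{\tilde{\gamma}}$ is a surjection onto the type II$_1$ factor $\mathcal{M}^{\tilde{\gamma}}$ (Proposition \ref{pro:surjective} and Proposition \ref{pro:fixed-factor}), and $\mathcal{B}^{\gamma}$ satisfies property (SI) (Proposition \ref{pro:target-si}). Writing $\bar{\tau}$ for the unique (faithful, normal) tracial state on the factor $\mathcal{M}^{\tilde{\gamma}}$, one checks that $\tau_{\mathcal{B}}=\bar{\tau}\circ\varrho$; in particular the kernel $J:=\ker(\varrho|_{\mathcal{B}^{\gamma}})$ is exactly the trace-kernel ideal $\{x\in\mathcal{B}^{\gamma}\mid\tau_{\mathcal{B}}(x^*x)=0\}$ and $\mathcal{B}^{\gamma}/J\cong\mathcal{M}^{\tilde{\gamma}}$. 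This is the equivariant analogue of the Matui--Sato picture, so I would follow \cite[Proposition 4.8]{MS2} and \cite[Proposition 3.3]{MS3}.

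For (i), I would prove that every tracial state $\sigma$ on $\mathcal{B}^{\gamma}$ vanishes on $J$; then $\sigma$ descends to a tracial state on $\mathcal{B}^{\gamma}/J\cong\mathcal{M}^{\tilde{\gamma}}$, which has a unique tracial state, forcing $\sigma=\bar{\tau}\circ\varrho=\tau_{\mathcal{B}}$. To see the vanishing, fix $N\in\mathbb{N}$ and use Proposition \ref{pro:key-pro}(i) to obtain orthogonal, mutually equivalent projections $p_1,\dots,p_N$ in $\mathcal{B}^{\gamma}$ with $\tau_{\mathcal{B}}(p_i)=1/N$; since any trace is constant on equivalent projections and $\sum_i\sigma(p_i)\leq 1$, we get $\sigma(p_1)\leq 1/N$. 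Now take a positive contraction $x\in J$, so $\tau_{\mathcal{B}}(x)=0$, and apply property (SI) (Proposition \ref{pro:target-si}) with $y=p_1$, which satisfies $\inf_m\tau_{\mathcal{B}}(p_1^m)=\tau_{\mathcal{B}}(p_1)>0$: we obtain $s\in\mathcal{B}^{\gamma}$ with $s^*s=x$ and $p_1 s=s$, whence $ss^*=p_1 ss^* p_1\leq p_1$ and therefore $\sigma(x)=\sigma(ss^*)\leq\sigma(p_1)\leq 1/N$. Letting $N\to\infty$ gives $\sigma(x)=0$, as required. The matrix-unit input from Proposition \ref{pro:key-pro}(i) is exactly what breaks the apparent circularity, since it supplies a full element that is simultaneously small for \emph{every} trace.

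For (ii), given positive elements $a,b\in\mathcal{B}^{\gamma}$ with $d_{\tau_{\mathcal{B}}}(a)<d_{\tau_{\mathcal{B}}}(b)$, I would transport the comparison to the factor and then repair the error with (SI). Since $\tau_{\mathcal{B}}=\bar{\tau}\circ\varrho$ we have $d_{\tau_{\mathcal{B}}}(a)=\bar{\tau}(s_{\varrho(a)})$ and $d_{\tau_{\mathcal{B}}}(b)=\bar{\tau}(s_{\varrho(b)})$ for the support projections $s_{\varrho(a)},s_{\varrho(b)}$ of $\varrho(a),\varrho(b)$ in $\mathcal{M}^{\tilde{\gamma}}$, so the hypothesis becomes $\bar{\tau}(s_{\varrho(a)})<\bar{\tau}(s_{\varrho(b)})$. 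Fixing a small $\delta>0$, comparison of projections in the type II$_1$ factor $\mathcal{M}^{\tilde{\gamma}}$ lets us realize $\varrho((a-\delta)_+)=v^*\varrho(b)v$ for some $v\in\mathcal{M}^{\tilde{\gamma}}$, while the strict inequality leaves a nonzero projection $r\leq s_{\varrho(b)}$, orthogonal to the part of $\varrho(b)$ used by $v$, with $\bar{\tau}(r)>0$. Using surjectivity of $\varrho|_{\mathcal{B}^{\gamma}}$ (Proposition \ref{pro:surjective}) I would lift $v$ to $\tilde{v}\in\mathcal{B}^{\gamma}$ and lift the comparison $r\precsim\varrho(b)$ to a positive element $c\in\mathcal{B}^{\gamma}$ with $c\precsim b$, $\varrho(c)=r$, and $c$ orthogonal modulo $J$ to $\tilde{v}^*b\tilde{v}$. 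Then $\varrho(\tilde{v}^*b\tilde{v})=\varrho((a-\delta)_+)$, so the positive part $g$ of $(a-\delta)_+-\tilde{v}^*b\tilde{v}$ lies in $J$, i.e. $\tau_{\mathcal{B}}(g)=0$. Since $\tau_{\mathcal{B}}(c^m)=\bar{\tau}(r)>0$ for all $m$, property (SI) (Proposition \ref{pro:target-si}) applied to $g$ and $c$ produces $s$ with $s^*s=g$ and $cs=s$, whence $ss^*\leq c^{2m}$ for all $m$ and so $g\sim ss^*\precsim c\precsim b$. Combining $\tilde{v}^*b\tilde{v}\precsim b$ with $g\precsim b$ (through a part of $b$ orthogonal to the first) gives $(a-\delta)_+\precsim b$; letting $\delta\to0$ and reindexing yields $r_0\in\mathcal{B}^{\gamma}$ with $r_0^*br_0=a$, which is strict comparison.

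The main obstacle is this final matching in (ii): passing from the von Neumann-level comparison in $\mathcal{M}^{\tilde{\gamma}}$ to an honest Cuntz comparison in $\mathcal{B}^{\gamma}$ across the quotient $\varrho$. One must choose the lifts so that the defect $g$ is a genuinely positive trace-zero element and so that the room $c$ coming from the strict inequality sits, up to $J$, inside the hereditary subalgebra of $b$ and orthogonally to $\tilde{v}^*b\tilde{v}$; only then does absorbing $g$ via Proposition \ref{pro:target-si} keep the whole of $(a-\delta)_+$ Cuntz-below $b$. This orthogonality and support bookkeeping, rather than any single norm estimate, is the delicate point, and it is exactly here that the arguments of \cite{MS2} and \cite{MS3} must be adapted to the fixed-point algebra $\mathcal{B}^{\gamma}$.
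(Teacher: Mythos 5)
Your overall architecture---reduce everything to the surjection $\varrho|_{\mathcal{B}^{\gamma}}$ onto the type II$_1$ factor $\mathcal{M}^{\tilde{\gamma}}$ together with property (SI) from Proposition \ref{pro:target-si}---is exactly the paper's, and your part (ii) is essentially the paper's adaptation of \cite[Proposition 4.8]{MS2} (the paper additionally reduces matrix amplifications via $\mathcal{B}^{\gamma}\otimes M_n(\mathbb{C})\cong\mathcal{B}^{\gamma}$, a step you omit, and obtains $r^*br=a$ exactly, rather than through a $\delta\to 0$ reindexing, by choosing the lifts $y_1,y_2$ to commute with $b$). The genuine gap is in part (i): you invoke Proposition \ref{pro:key-pro}(i) to produce mutually orthogonal, mutually equivalent projections $p_1,\dots,p_N$ \emph{in} $\mathcal{B}^{\gamma}$ with $\tau_{\mathcal{B}}(p_i)=1/N$. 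But Proposition \ref{pro:key-pro} lives in $F(A\otimes\mathcal{W})^{\gamma}$, which is a different algebra, and at this stage there is no homomorphism from $F(A\otimes\mathcal{W})^{\gamma}$ into $\mathcal{B}^{\gamma}$: an element of $(A\otimes\mathcal{W})_{\omega}$ need not lie in $\mathcal{A}$ (see the Remark after Proposition \ref{pro:homrho}), and bridging these two algebras is precisely the purpose of the map $\rho$ constructed only in Section \ref{sec:stable-uniqueness}. Even granting $\rho$, it is defined on $F(A\otimes\mathcal{W})^{\gamma}\otimes\mathcal{W}$, is not unital, and since $\mathcal{W}$ is projectionless the elements $\rho(p\otimes b)$ are never nonzero projections; nor do projections automatically lift along $\varrho$ from the factor $\mathcal{M}^{\tilde{\gamma}}$. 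Nothing available at this point guarantees that $\mathcal{B}^{\gamma}$ (which is not even known to be unital) contains a single nonzero projection, so this step fails as written.

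The repair is the paper's, and it needs no projections at all: the ``full element that is small for every trace'' is manufactured out of $x$ itself by iterating property (SI). Given a positive contraction $x\in\ker\varrho|_{\mathcal{B}^{\gamma}}$, so that $\tau_{\mathcal{B}}(x)=0$, let $\{e_n\}$ be an approximate unit for $A\otimes\mathcal{W}$ and put $e:=[(e_n)_n]\in\mathcal{B}^{\gamma}$. Then $\tau_{\mathcal{B}}\bigl((e-x)^m\bigr)=1$ for every $m$, so Proposition \ref{pro:target-si} yields $s_1$ with $s_1^*s_1=x$ and $(e-x)s_1=s_1$, hence $s_1s_1^*\le e-x$. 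Since $\tau_{\mathcal{B}}(x+s_1s_1^*)=0$, the same argument applied to $e-x-s_1s_1^*$ gives $s_2$, and inductively one obtains $s_1,\dots,s_N$ with $s_i^*s_i=x$ and $x+\sum_{i=1}^{N}s_is_i^*\le e$. For any tracial state $T$ on $\mathcal{B}^{\gamma}$ this forces $(N+1)T(x)=T(x)+\sum_{i=1}^{N}T(s_is_i^*)\le T(e)\le 1$, whence $T(x)=0$; your descent to the unique trace on the factor $\mathcal{M}^{\tilde{\gamma}}$ then completes (i) exactly as you intended. That single substitution---iterated (SI) against an approximate unit in place of a matrix-unit system---is the idea your proposal is missing.
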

\begin{proof}
(i) By Proposition \ref{pro:fixed-factor} and Proposition \ref{pro:surjective}, it suffices to show that 
if $[(x_n)_n]$ is a positive contraction in $\mathrm{ker}\;\varrho|_{\mathcal{B}^{\gamma}}$, 
then $T([(x_n)_n])=0$ for any tracial state $T$ on $\mathcal{B}^{\gamma}$. 
Note that $[(x_n)_n]^{1/2}\in\mathrm{ker}\;\varrho|_{\mathcal{B}^{\gamma}}$, and hence 
$\tau_{\mathcal{B}}([(x_n)_n])=0$. 
Let $\{e_n\}_{n=1}$ be an approximate unit for $A\otimes\mathcal{W}$. 
Then it is easy to see that for any $m\in\mathbb{N}$, $\tau_{\mathcal{B}}(([(e_n)_n]-[(x_n)_n])^m)=1$. 
By Proposition \ref{pro:target-si}, there exists an element $s_1\in\mathcal{B}^{\gamma}$ such that 
$s_1^*s_1=[(x_n)_n]$ and $([(e_n)_n]-[(x_n)_n])s_1=s_1$. 
Hence we have $s_1s_1^*\leq [(e_n)_n]-[(x_n)_n]$. Since $[(x_n)_n]+s_1s_1^*$ is a positive contraction 
and $\tau_{\mathcal{B}}([(x_n)_n]+s_1s_1^*)=0$, the same argument as above shows that there exists an 
element $s_2\in\mathcal{B}^{\gamma}$ such that $s_2^*s_2=[(x_n)_n]$ and 
$([(e_n)_n]-[(x_n)_n]-s_1s_1^*)s_2=s_2$. Repeating this process, for any $N\in\mathbb{N}$, 
we obtain elements $s_1,s_2,...,s_N$ in $\mathcal{B}^{\gamma}$ such that 
$$
s_i^*s_i=[(x_n)_n] \quad \text{and} \quad [(x_n)_n]+\sum_{i=1}^{N}s_is_i^* \leq [(e_n)_n].
$$
Since $T$ is a tracial state and $[(e_n)_n]$ is a contraction, $(N+1)T([(x_n)_n])\leq 1$. 
Therefore $T([(x_n)_n])=0$. 

(ii) Since $\mathcal{W}\otimes M_n(\mathbb{C})$ is isomorphic to $\mathcal{W}$, it can be easily 
checked that $\mathcal{B}^{\gamma}\otimes M_n(\mathbb{C})$ is isomorphic to 
$\mathcal{B}^{\gamma}$. Hence it is enough to show that if $a$ and $b$ are positive elements 
in $\mathcal{B}^{\gamma}$ with $d_{\tau_{\mathcal{B}}}(a)<d_{\tau_{\mathcal{B}}}(b)$, then there exists 
an element $r$ in $\mathcal{B}^{\gamma}$ such that $r^*br=a$. 
Using Proposition \ref{pro:fixed-factor}, Proposition \ref{pro:surjective} and 
Proposition \ref{pro:target-si} instead of \cite[Lemma 4.2]{MS2}, \cite[Theorem 4.3]{MS2} 
and \cite[Proposition 4.5]{MS2}, the same argument as in the proof of 
\cite[Proposition 4.8]{MS2} shows this. 
We shall recall a sketch of a proof for reader's convenience. 
We may assume that $a$ and $b$ are contractions. 
Let $\tilde{\tau}_{\omega}$ denote the unique tracial state on $\mathcal{M}^{\tilde{\gamma}}$. 
Since $\tau_{\mathcal{B}}=\tilde{\tau}_{\omega}\circ \varrho$, we have 
$$
d_{\tau_{\mathcal{B}}}(a)= \tilde{\tau}_{\omega} (1_{(0, \infty)}(\varrho(a))) \quad \text{and} \quad 
d_{\tau_{\mathcal{B}}}(b)= \tilde{\tau}_{\omega} (1_{(0, \infty)}(\varrho(b)))
$$
where $1_{(0,\infty)}$ is the characteristic function of $(0,\infty)$. Note that 
$1_{(0, \infty)}(\varrho(a))$ and $1_{(0, \infty)}(\varrho(b))$ are projections in 
$\mathcal{M}^{\tilde{\gamma}}$ because $\mathcal{M}^{\tilde{\gamma}}$ is a von Neumann algebra. 
Since $\mathcal{M}^{\tilde{\gamma}}$ is a factor of type II$_1$ and 
$\varrho|_{\mathcal{B}^{\gamma}}$ is surjective, the same argument as in the proof of 
\cite[Proposition 4.8]{MS2} shows that there exist positive contractions $y_1$ and $y_2$ 
in $\mathcal{B}^{\gamma}$ and a projection $p$ in $\mathcal{M}^{\tilde{\gamma}}$ such that 
$$
y_1y_2=0,\quad \varrho(y_1)=p,\quad  \varrho (y_2)=1-p, \quad y_1b=by_1, \quad y_2b=by_2
$$
and there exists $\varepsilon >0$ such that 
$$
\tilde{\tau}_{\omega}(1_{(\varepsilon, \infty)}(\varrho(b)p))>0 \quad \text{and} \quad 
\tilde{\tau}_{\omega}(1_{(0, \infty)}(\varrho(a)))< 
\tilde{\tau}_{\omega}(1_{(\varepsilon, \infty)}(\varrho(b)(1-p))).
$$
Furthermore, there exists a unitary element $v$ in $\mathcal{M}^{\tilde{\gamma}}$ such that 
$$
1_{(0, \infty)}(\varrho(a)) \leq v1_{(\varepsilon, \infty)}(\varrho(b)(1-p))v^*
$$
because $\mathcal{M}^{\tilde{\gamma}}$ is a factor of type II$_1$. 
Since $\varrho|_{\mathcal{B}^{\gamma}}$ is surjective, there exists an element $w$ in 
$\mathcal{B}^{\gamma}$ such that $\varrho (w)=v$. Define continuous functions $g$ and $h$ on $
[0, \infty )$ by $g(t)=\min\{1/\varepsilon, 1/t \}$ and $h(t)=tg(t)$. 
Note that $g(b)$ is an element in $(\mathcal{B}^{\gamma})^{\sim}$ and 
$$
h(t):= \left\{\begin{array}{cl}
t/\varepsilon & \text{if } t\in [0,\varepsilon]   \\
1                 & \text{if } t\in (\varepsilon, \infty ) 
\end{array}
\right..
$$ 
Put $r_1:= y_2^{1/2}g(b)^{1/2}w^*a^{1/2}\in\mathcal{B}^{\gamma}$, then we have 
$r_1^*br_1\leq a$ and $\varrho (r_1^*br_1)=\varrho(a)$. 
Therefore $a-r_1^*br_1$ is a positive contraction in $\mathrm{ker}\; \varrho$, and hence 
we have 
$$
\tau_{\mathcal{B}} (a-r_1^*br_1)=0. 
$$ 
Since 
$
\tau_{\mathcal{B}}((h(b)y_1)^m)= \tilde{\tau}_{\omega} (\varrho(h(b))^mp)
\geq \tilde{\tau}_{\omega}(1_{(\varepsilon, \infty)}(\varrho(b)p)))
$
for any $m\in\mathbb{N}$, we have 
$$
\inf_{m\in\mathbb{N}}\tau_{\mathcal{B}}((h(b)y_1)^m)>0. 
$$
Therefore Proposition \ref{pro:target-si} implies that there exists an element $s$ in 
$\mathcal{B}^{\gamma}$ such that 
$$
s^*s= a-r_1^*br_1 \quad \text{and} \quad h(b)y_1s=s. 
$$
Put $r_2:=y_1^{1/2}g(b)^{1/2}s\in \mathcal{B}^{\gamma}$, then we have $r_1^*br_2=0$ and 
$r_2^*br_2=a-r_1^*br_1$. 
Consequently, put $r=r_1+r_2$, then we have $r^*br=a$. 
\end{proof}

\section{Stable uniqueness theorem}\label{sec:stable-uniqueness}

In this section we shall show a variant of \cite[Corollary 3.8]{Na3} which is based on the 
results in \cite{EN}(see also \cite{EGLN}), \cite{EllK}(see also \cite{G}), \cite{DE1} and \cite{DE2}. 

First, we shall define a homomorphism $\rho$ from $F(A\otimes
\mathcal{W})^{\gamma}$ to $\mathcal{B}^{\gamma}$. 
Let $\{k_n\}_{n=1}^\infty$ be an approximate unit for $\mathcal{W}$ with $k_{n+1}k_{n}=k_{n}$, and 
let $\mathcal{W}_0:=\{k_nbk_n \; |\; n\in\mathbb{N}, b\in\mathcal{W}\}$. Then 
$\mathcal{W}_0$ is a dense self-adjoint subalgebra of $\mathcal{W}$. 
For any $(x_n)_n\in (A\otimes\mathcal{W})_{\omega}$, $a\in A$, $b\in\mathcal{W}$ and 
$N\in\mathbb{N}$, we have 
\begin{align*}
((1_{A^{\sim}}\otimes k_N)x_n(1_{A^{\sim}}\otimes b)(a\otimes 1_{\mathcal{W}^{\sim}}))_n  
& = ((1_{A^{\sim}}\otimes k_Nk_{N+1})x_n(a\otimes b))_n \\
& = ((a\otimes k_{N}k_{N+1}b)x_n)_n \\
& = ((1_{A^{\sim}}\otimes k_N)x_n (a\otimes k_{N+1}b))_n \\
& =  ((a\otimes k_Nk_{N+1})x_n (1_{A^{\sim}}\otimes b))_n \\
& = ((a\otimes 1_{\mathcal{W}^{\sim}})(1_{A^{\sim}}\otimes k_N)x_n(1_{A^{\sim}}\otimes b))_n.
\end{align*}
Hence $((1_{A^{\sim}}\otimes k_N)x_n(1_{A^{\sim}}\otimes b))_n\in \mathcal{A}$. 
For any $[(x_n)_n]\in F(A\otimes\mathcal{W})^{\gamma}$ and $k_Nbk_N\in\mathcal{W}_0$, define 
$$
\rho ([(x_n)_n]\otimes k_{N}bk_{N}):= [((1_{A^{\sim}}\otimes k_N)x_n(1_{A^{\sim}}\otimes bk_N))_n]\in
\mathcal{B}. 
$$
We shall show this is well defined. Let $[(x_n)_n]=[(y_n)_n]\in F(A\otimes\mathcal{W})^{\gamma}$ 
and $k_{N}bk_{N}=k_{N^{\prime}}b^{\prime}k_{N^{\prime}}\in \mathcal{W}_0$. 
For any $a\in A$, we have 
\begin{align*}
& (((1_{A^{\sim}}\otimes k_N)x_n(1_{A^{\sim}}\otimes bk_N)-(1_{A^{\sim}
}\otimes k_{N^{\prime}})y_n(1_{A^{\sim}}\otimes b^{\prime}k_{N^{\prime}}))
(a\otimes 1_{\mathcal{W}^{\sim}}))_n \\
& = ((1_{A^{\sim}}\otimes k_N)x_n(a\otimes bk_N)-(1_{A^{\sim}
}\otimes k_{N^{\prime}})y_n(a\otimes b^{\prime}k_{N^{\prime}}))_n \\
& = ((a\otimes k_{N}bk_{N})x_n-(a\otimes k_{N^{\prime}}b^{\prime}k_{N^{\prime}})y_n)_n=
 ((a\otimes k_{N}bk_{N})(x_n-y_n))_n=0.
\end{align*}
Therefore  
$[((1_{A^{\sim}}\otimes k_N)x_n(1_{A^{\sim}}\otimes bk_N))_n]=
 [((1_{A^{\sim}}\otimes k_{N^{\prime}})y_n(1_{A^{\sim}}\otimes b^{\prime}k_{N^{\prime}}))_n]$. 
By a similar argument, 
it can be easily checked that $\rho$ is a homomorphism from the algebraic tensor product 
$F(A\otimes\mathcal{W})^{\gamma}\odot\mathcal{W}_0$ to $\mathcal{B}^{\gamma}$. 
Since we have 
\begin{align*}
\| \rho ([(x_n)_n]\otimes k_{N}bk_{N})\| 
& = \sup_{a\in A_{+,1}} \lim_{n\to \omega} \|(1_{A^{\sim}}\otimes k_{N})x_n
(1_{A^{\sim}}\otimes bk_{N})(a\otimes 1_{\mathcal{W}^{\sim}}) \| \\ 
& = \sup_{a\in A_{+,1}} \lim_{n\to \omega} \|(a\otimes k_{N}bk_{N})x_n\| \\ 
& \leq \sup_{a\in A_{+,1}} \lim_{n\to \omega} \|a\| \| k_{N}bk_{N}\|  \| x_n\| \\
& = \lim_{n\to\omega} \| x_n\| \cdot \| k_{N}bk_{N}\|,
\end{align*}
$\rho$ can be extended to a homomorphism from the algebraic tensor product 
$F(A\otimes\mathcal{W})^{\gamma}\odot\mathcal{W}$ to $\mathcal{B}^{\gamma}$. 
Consequently, $\rho$ can be extended to a homomorphism from 
$F(A\otimes\mathcal{W})^{\gamma}\otimes\mathcal{W}$ 
to $\mathcal{B}^{\gamma}$ because $\mathcal{W}$ is nuclear. 
By the construction of $\rho$, it is easy to show the following proposition. 

\begin{pro}\label{pro:homrho}
Let $(z_n)_n$ be an element in $\mathcal{A}$ such that 
$[(z_n)_n]=\rho ([(x_n)_n]\otimes b)$ 
for some $[(x_n)_n]\in F(A\otimes\mathcal{W})^{\gamma}$ and 
$b\in \mathcal{W}$. Then 
$$
(z_n(a\otimes 1_{\mathcal{W}^{\sim}}))_n= (x_n(a\otimes b))_n
$$
for any $a\in A$. 
\end{pro}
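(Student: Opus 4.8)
The plan is to reduce the asserted identity to the case where the $\mathcal{W}$-factor lies in the dense $*$-subalgebra $\mathcal{W}_0$, where it can be read off directly from the defining formula for $\rho$, and then pass to arbitrary $b\in\mathcal{W}$ by a continuity argument. Before doing so, I would record that both sides of the equality are well defined. The right-hand side $(x_n(a\otimes b))_n$ does not depend on the choice of representative $(x_n)_n$ of the class $[(x_n)_n]\in F(A\otimes\mathcal{W})^{\gamma}$: two representatives differ by an element of $\mathrm{Ann}(A\otimes\mathcal{W},(A\otimes\mathcal{W})^{\omega})$, which annihilates $a\otimes b$ on the right. For the left-hand side, if $(z_n)_n,(z_n')_n\in\mathcal{A}$ both represent $\rho([(x_n)_n]\otimes b)$, then $(z_n-z_n')_n\in\mathcal{I}$, so by the very definition of $\mathcal{I}$ we have $((z_n-z_n')(a\otimes 1_{\mathcal{W}^{\sim}}))_n=0$. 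Hence $[(z_n)_n]\mapsto (z_n(a\otimes 1_{\mathcal{W}^{\sim}}))_n$ descends to a well-defined linear map $\Psi_a$ on $\mathcal{B}$, which the $C^*$-norm formula $\|[(z_n)_n]\|=\sup_{a'\in A_{+,1}}\lim_{n\to\omega}\|z_n(a'\otimes 1_{\mathcal{W}^{\sim}})\|$ shows to be bounded, with norm at most $\|a\|$.

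Next I would fix $[(x_n)_n]$ and $a$ and regard both sides as functions of $b\in\mathcal{W}$ with values in $(A\otimes\mathcal{W})^{\omega}$. The map $b\mapsto \Psi_a(\rho([(x_n)_n]\otimes b))$ is linear and bounded, being the composition of the bounded map $\Psi_a$ with the homomorphism $\rho$ restricted to the corner $[(x_n)_n]\otimes\mathcal{W}$; and $b\mapsto (x_n(a\otimes b))_n$ is plainly linear and bounded. Since $\mathcal{W}_0$ is dense in $\mathcal{W}$, it therefore suffices to verify the identity for $b=k_Nck_N\in\mathcal{W}_0$.

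For such $b$ I would use the defining representative $z_n=(1_{A^{\sim}}\otimes k_N)x_n(1_{A^{\sim}}\otimes ck_N)$, so that $z_n(a\otimes 1_{\mathcal{W}^{\sim}})=(1_{A^{\sim}}\otimes k_N)x_n(a\otimes ck_N)$. Because $(x_n)_n$ is central in the sense of $F(A\otimes\mathcal{W})$, it asymptotically commutes with the multiplier $1_{A^{\sim}}\otimes k_N$, so that $([1_{A^{\sim}}\otimes k_N,x_n])_n=0$ in $(A\otimes\mathcal{W})^{\omega}$; multiplying this on the right by the bounded sequence $(a\otimes ck_N)_n$ gives $(z_n(a\otimes 1_{\mathcal{W}^{\sim}}))_n=(x_n(1_{A^{\sim}}\otimes k_N)(a\otimes ck_N))_n=(x_n(a\otimes k_Nck_N))_n=(x_n(a\otimes b))_n$, which is exactly the claim on $\mathcal{W}_0$. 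The computation itself is thus a single application of asymptotic centrality; the only genuine obstacle is the bookkeeping of the various quotients $\mathcal{A}$, $\mathcal{B}=\mathcal{A}/\mathcal{I}$ and $F(A\otimes\mathcal{W})^{\gamma}$, so that each asymptotic commutation and each passage between representatives is justified in the correct ultrapower and the continuity reduction to $\mathcal{W}_0$ is legitimate.
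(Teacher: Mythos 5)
Your overall strategy---checking that both sides are representative-independent, reducing to $b\in\mathcal{W}_0$ by linearity, boundedness and density, and then computing with the defining representative---is exactly the argument the paper leaves implicit when it says the proposition follows ``by the construction of $\rho$''. However, the one actual computation in your proof rests on a false principle. You assert that, since $(x_n)_n\in (A\otimes\mathcal{W})_{\omega}$, it asymptotically commutes with the multiplier $1_{A^{\sim}}\otimes k_N$, i.e.\ $([1_{A^{\sim}}\otimes k_N,x_n])_n=0$ in $(A\otimes\mathcal{W})^{\omega}$. Centrality here means asymptotic commutation with elements of $A\otimes\mathcal{W}$ only, and this does \emph{not} pass to multipliers when $A$ is non-unital---which the standing hypotheses allow (e.g.\ $A=\mathcal{W}$ with a strongly outer $\mathbb{Z}_2$-action). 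Concretely, let $\{h_n\}_{n}$ be an approximate unit for $A$ and $b'\in\mathcal{W}$ with $[b',k_N]\neq 0$; then $((h_n^2-h_n)\otimes b')_n$ lies in $(A\otimes\mathcal{W})_{\omega}$ because $(h_n^2-h_n)c\to 0$ for every $c\in A$, yet $[(h_n^2-h_n)\otimes b',\,1_{A^{\sim}}\otimes k_N]=(h_n^2-h_n)\otimes [b',k_N]$ has norm bounded below along $\omega$ when $A$ is projectionless (a positive contraction $h$ with $\|h^2-h\|<3/16$ has a spectral gap and hence yields a nonzero projection unless $\|h\|\leq 1/4$, which is impossible for an approximate unit). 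This is the mirror image of the example in the Remark immediately following the proposition in the paper, where $(a\otimes(k_n^2-k_n))_n$ is central but fails to commute with $A\otimes 1_{\mathcal{W}^{\sim}}$. Since your $(x_n)_n$ is an arbitrary representative of its class, you cannot invoke this commutation.

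The step is repairable without changing your structure, and the repair is precisely the computation the paper performs when constructing $\rho$: never commute $x_n$ past the multiplier alone, but past honest elements of $A\otimes\mathcal{W}$, absorbing $1_{A^{\sim}}\otimes k_N$ into them. With $z_n=(1_{A^{\sim}}\otimes k_N)x_n(1_{A^{\sim}}\otimes ck_N)$ and $b=k_Nck_N$,
$$
(z_n(a\otimes 1_{\mathcal{W}^{\sim}}))_n=((1_{A^{\sim}}\otimes k_N)x_n(a\otimes ck_N))_n
=((1_{A^{\sim}}\otimes k_N)(a\otimes ck_N)x_n)_n=((a\otimes b)x_n)_n=(x_n(a\otimes b))_n,
$$
where the second and fourth equalities use centrality with respect to $a\otimes ck_N$ and $a\otimes b$, both of which lie in $A\otimes\mathcal{W}$ (and the second is preserved under left multiplication by the bounded multiplier $1_{A^{\sim}}\otimes k_N$). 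With this substitution, your well-definedness and density reductions, which are fine, complete a correct proof that agrees with the paper's intended argument.
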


\begin{rem}
Note that there exists an element $(x_n)_n$ in $(A\otimes\mathcal{W})_{\omega}$ such that  
$(x_n)_n\notin \mathcal{A}$. Indeed, if $a$ is not an element in the center of $A$, 
$(a\otimes (k_n^2-k_n))_n$ is such an element. 
But we do not know whether there exist $(x_n)_n\in (A\otimes \mathcal{W})_{\omega}$ and 
$b\in \mathcal{W}$ such that $(x_n(1_{A^{\sim}}\otimes b))_n\notin \mathcal{A}$.  
\end{rem}

The following lemma is an analogous lemma of \cite[Lemma 3.6]{Na3}. 
\begin{lem}\label{lem:Lemma3.6}
If $x$ is a positive element in $F(A\otimes\mathcal{W})$, then 
$$
\tau_{\mathcal{B}}(\rho (x\otimes b))= \tau_{\omega}(x) \tau_\mathcal{W} (b)
$$
for any $b\in\mathcal{W}$. 
\end{lem}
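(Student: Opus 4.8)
The plan is to reduce to $b$ in the dense self-adjoint subalgebra $\mathcal{W}_0=\{k_Nb^{\prime}k_N\;|\;N\in\mathbb{N},\,b^{\prime}\in\mathcal{W}\}$, compute $\tau_{\mathcal{B}}$ on the explicit representative of $\rho(x\otimes b)$ supplied by the definition of $\rho$, and then invoke a trace-factorization property for central sequences that I establish inside the tracial ultrapower $M$. Since $\rho(x\otimes b)$ is defined, $x\in F(A\otimes\mathcal{W})^{\gamma}$; fix a positive representative $(x_n)_n\in (A\otimes\mathcal{W})_{\omega}$ of $x$ and take $b=k_Nb^{\prime}k_N\in\mathcal{W}_0$. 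By the definition of $\rho$, the sequence $((1_{A^{\sim}}\otimes k_N)x_n(1_{A^{\sim}}\otimes b^{\prime}k_N))_n\in\mathcal{A}$ represents $\rho(x\otimes b)$, so by the definition of $\tau_{\mathcal{B}}$ and the trace property of $\tau$,
\[
\tau_{\mathcal{B}}(\rho(x\otimes b))=\lim_{n\to\omega}\tau\bigl((1_{A^{\sim}}\otimes k_N)x_n(1_{A^{\sim}}\otimes b^{\prime}k_N)\bigr)=\lim_{n\to\omega}\tau\bigl(x_n(1_{A^{\sim}}\otimes b^{\prime}k_N^2)\bigr).
\]
Setting $c:=1_{A^{\sim}}\otimes b^{\prime}k_N^2\in M(A\otimes\mathcal{W})$ and noting $\tau(c)=\tau_{\mathcal{W}}(b^{\prime}k_N^2)=\tau_{\mathcal{W}}(k_Nb^{\prime}k_N)=\tau_{\mathcal{W}}(b)$, the lemma for $b\in\mathcal{W}_0$ reduces to the factorization identity $\lim_{n\to\omega}\tau(x_nc)=\tau_{\omega}(x)\,\tau(c)$ for $c\in M(A\otimes\mathcal{W})$.

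To prove this identity I would pass to $M$. Using $\tilde{\tau}\circ\pi_{\tau}=\tau$ and the homomorphism $\Phi$, the left-hand side equals $\tilde{\tau}\bigl(\Phi((x_n)_n)\,\overline{\pi}_{\tau}(c)\bigr)$, where $\tilde{\tau}$ is the canonical trace on $M$. Since $(x_n)_n$ is central in $A\otimes\mathcal{W}$, its image $\Phi((x_n)_n)$ commutes with $\pi_{\tau}(A\otimes\mathcal{W})$ and hence, by normality, with all of $\mathcal{N}:=\pi_{\tau}(A\otimes\mathcal{W})''\cong\mathcal{R}_0$; thus $\Phi((x_n)_n)\in M\cap\mathcal{N}'$ while $\overline{\pi}_{\tau}(c)\in\mathcal{N}$. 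Now I invoke the standard factorization for tracial ultrapowers of a II$_1$ factor: for $\eta\in M\cap\mathcal{N}'$ and $y\in\mathcal{N}$ one has $\tilde{\tau}(\eta y)=\tilde{\tau}(\eta)\,\tilde{\tau}(y)$. This follows from Dixmier's property: given $y\in\mathcal{N}$ with $\tilde{\tau}(y)=0$ and $\varepsilon>0$, choose unitaries $u_i\in\mathcal{N}$ and convex weights $\lambda_i$ with $\|\sum_i\lambda_iu_iyu_i^*\|<\varepsilon$; since $\eta$ commutes with each $u_i$, we get $\tilde{\tau}(\eta y)=\tilde{\tau}\bigl(\eta\sum_i\lambda_iu_iyu_i^*\bigr)$, which has modulus at most $\varepsilon\|\eta\|_2$, whence $\tilde{\tau}(\eta y)=0$; the general case follows by subtracting $\tilde{\tau}(y)1$. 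Taking $\eta=\Phi((x_n)_n)$, with $\tilde{\tau}(\eta)=\lim_{n\to\omega}\tau(x_n)=\tau_{\omega}(x)$, and $y=\overline{\pi}_{\tau}(c)$, with $\tilde{\tau}(y)=\tau(c)$, yields the factorization identity and hence the lemma for $b\in\mathcal{W}_0$.

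Finally, both $b\mapsto\tau_{\mathcal{B}}(\rho(x\otimes b))$ and $b\mapsto\tau_{\omega}(x)\tau_{\mathcal{W}}(b)$ are bounded linear functionals on $\mathcal{W}$ (the first because $\rho$ is a homomorphism with $\|\rho(x\otimes b)\|\leq\|x\|\,\|b\|$ and $\tau_{\mathcal{B}}$ is continuous), so their agreement on the dense subalgebra $\mathcal{W}_0$ extends to all of $\mathcal{W}$, completing the proof. The one genuinely non-formal point is the factorization identity in the ultrapower; once the central sequence is correctly placed in $M\cap\mathcal{N}'$ via $\Phi$, the Dixmier-property argument disposes of it, and the remaining steps (the trace-cycling computation and the density extension) are routine.
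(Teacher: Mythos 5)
Your proof is correct, but it takes a genuinely different route from the paper's. The paper takes an arbitrary representative $(z_n)_n$ of $\rho(x\otimes b)$, truncates by an approximate unit of $A$ to get $\lim_{n\to\omega}\tau(z_n)=\lim_{m\to\infty}\lim_{n\to\omega}\tau\bigl(z_n(h_m\otimes 1_{\mathcal{W}^{\sim}})\bigr)$ (citing \cite[Proposition 5.3]{Na2}), identifies $z_n(h_m\otimes 1_{\mathcal{W}^{\sim}})$ with $x_n(h_m\otimes b)$ via Proposition \ref{pro:homrho}, and then invokes the already-established trace factorization \cite[Lemma 3.6]{Na3} for central sequences against the \emph{fixed element} $h_m\otimes b\in A\otimes\mathcal{W}$, finishing with $\tau_A(h_m)\to 1$. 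You instead work on the dense subalgebra $\mathcal{W}_0$ where $\rho$ has an explicit representative, cycle the trace to reduce the claim to the factorization identity $\lim_{n\to\omega}\tau(x_nc)=\tau_{\omega}(x)\tau(c)$ against a \emph{multiplier} $c=1_{A^{\sim}}\otimes b'k_N^2$, prove that identity from scratch (placing $\Phi((x_n)_n)$ in the relative commutant of $\mathcal{N}=\pi_{\tau}(A\otimes\mathcal{W})''$ inside the tracial ultrapower $M$ and applying Dixmier averaging in the II$_1$ factor), and then extend from $\mathcal{W}_0$ to $\mathcal{W}$ by linearity and density. What each buys: the paper's proof is shorter because the factorization lemma it cites only applies to elements of the algebra, which is exactly why it needs the approximate-unit truncation; yours is self-contained and proves the stronger multiplier version directly, eliminating the truncation and the double limit --- indeed your Dixmier argument is essentially the von Neumann algebraic mechanism underlying the cited lemma. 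One point to tighten: ``by normality'' in passing from commutation with $\pi_{\tau}(A\otimes\mathcal{W})$ to commutation with all of $\mathcal{N}$ should be spelled out as Kaplansky density plus joint $\|\cdot\|_2$-continuity of multiplication on bounded sets, where the estimate $\|z\eta\|_2\le\|z\|_2\,\|\eta\|$ requires traciality of the ultrapower trace; this is standard but not literally ``normality.''
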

\begin{proof}
Let $(z_n)_n$ be an element in $\mathcal{A}$ such that $[(z_n)_n]=\rho (x\otimes b)$, and let 
$\{h_n\}_{n=1}^\infty$ be an approximate unit for $A$. 
Note that $\tau_{\mathcal{B}}(\rho (x\otimes b))=\lim_{n\to\omega}\tau (z_n)$. 
Since $\lim_{n\to \infty} \tau (h_n\otimes 1_{\mathcal{W}})=1$, 
a similar argument as in the proof of \cite[Proposition 5.3]{Na2} shows 
$$
\lim_{n\to\omega}\tau (z_n)=\lim_{m\to \infty} \lim_{n\to \omega} 
\tau (z_n(h_m\otimes 1_{\mathcal{W}^{\sim}})).
$$
By Proposition \ref{pro:homrho} and \cite[Lemma 3.6]{Na3}, 
$$
\lim_{n\to \omega} \tau (z_n(h_m\otimes 1_{\mathcal{W}^{\sim}}))= \tau_{\omega}(x)
\tau (h_m\otimes b)=\tau_{\omega}(x)\tau_A (h_m)\tau_\mathcal{W}(b)
$$
for any $m\in\mathbb{N}$. 
Therefore $\tau_{\mathcal{B}}(\rho (x\otimes b))= \tau_{\omega}(x) \tau_\mathcal{W} (b)$ since 
$\lim_{m\to \infty} \tau_A (h_m)=1$. 
\end{proof}

For a projection $p$ in $F(A\otimes\mathcal{W})^{\gamma}$, let
$$
\mathcal{B}^{\gamma}_p:=\overline{\rho (p\otimes s)
\mathcal{B}^{\gamma} \rho (p\otimes s)}
$$
where $s$ is a strictly positive element in $\mathcal{W}$.
Note that $\mathcal{B}^{\gamma}_p$ is a hereditary subalgebra of  $\mathcal{B}^{\gamma}$.
Define a homomorphism $\sigma_p$ from $\mathcal{W}$ to $\mathcal{B}^{\gamma}_p$ by 
$$
\sigma_p (b) = \rho (p\otimes b)
$$
for any $b\in \mathcal{W}$.

Since the target algebra $\mathcal{B}^{\gamma}$ has strict comparison by 
Proposition \ref{pro:main-section3},  
the same proof as \cite[Proposition 3.7]{Na3} shows the following proposition by 
using Lemma \ref{lem:Lemma3.6} instead of \cite[Lemma 3.6]{Na3}. 
See \cite[Definition 3.2]{Na3} for the definition of the $(L,N)$-fullness. 

\begin{pro}\label{pro:full-inclusion}
There exist maps $L: \mathcal{W}_{+,1}\setminus \{0\}\times (0,1)\to \mathbb{N}$ and 
$N: \mathcal{W}_{+,1}\setminus \{0\}\times (0,1) \to (0,\infty)$ such that the following holds. 
If $p$ be a projection in $F(A\otimes\mathcal{W})^{\gamma}$ such that 
$\tau_{\omega} (p)>0$, then $\sigma_p$ is $(L,N)$-full. 
\end{pro}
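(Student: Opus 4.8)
The plan is to follow the proof of \cite[Proposition 3.7]{Na3} line by line, using Lemma \ref{lem:Lemma3.6} in place of the trace computation \cite[Lemma 3.6]{Na3}. The whole content of the statement is the \emph{uniformity}: the maps $L$ and $N$ must be chosen independently of the projection $p$, and Lemma \ref{lem:Lemma3.6} is precisely what makes this possible. Since $\sigma_p$ is a genuine homomorphism, for a positive $b\in\mathcal{W}$ one has $\sigma_p(b)^{1/m}=\rho(p\otimes b^{1/m})$, so Lemma \ref{lem:Lemma3.6} gives $\tau_{\mathcal{B}}(\sigma_p(b^{1/m}))=\tau_{\omega}(p)\,\tau_{\mathcal{W}}(b^{1/m})$, and letting $m\to\infty$ yields
$$
d_{\tau_{\mathcal{B}}}(\sigma_p(b))=\tau_{\omega}(p)\,d_{\tau_{\mathcal{W}}}(b).
$$
In particular the ratio $d_{\tau_{\mathcal{B}}}(\sigma_p(b))/d_{\tau_{\mathcal{B}}}(\rho(p\otimes s))=d_{\tau_{\mathcal{W}}}(b)$ is independent of $p$, and it is this cancellation of the factor $\tau_{\omega}(p)$ that will let me pick $L$ and $N$ without reference to $p$.

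First I would fix $c\in\mathcal{W}_{+,1}\setminus\{0\}$ and $\varepsilon\in(0,1)$ and cut down to a spectral piece: choose $\delta>0$ so small that $(c-\delta)_{+}\neq 0$, and set $\kappa:=d_{\tau_{\mathcal{W}}}((c-\delta)_{+})>0$, a quantity depending only on $c$. Choosing an integer $k$ with $k\kappa>1$, the displayed formula gives, for every projection $p$ with $\tau_{\omega}(p)>0$, that the $k$-fold amplification of $\sigma_p((c-\delta)_{+})$ has dimension $k\,\tau_{\omega}(p)\,\kappa>\tau_{\omega}(p)=d_{\tau_{\mathcal{B}}}(\rho(p\otimes s))$, the dimension of the distinguished full element $\rho(p\otimes s)$ generating $\mathcal{B}^{\gamma}_p$. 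Since $\mathcal{B}^{\gamma}$ has strict comparison by Proposition \ref{pro:main-section3} and $\mathcal{B}^{\gamma}\otimes M_k(\mathbb{C})\cong\mathcal{B}^{\gamma}$, this strict inequality of dimension functions yields the Cuntz subequivalence of $\rho(p\otimes s)$ to the $k$-fold direct sum of $\sigma_p((c-\delta)_{+})$.

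Next I would convert this subequivalence into an explicit approximate factorization. By R{\o}rdam's lemma there are $v_1,\dots,v_k$ with $\|\rho(p\otimes s)-\sum_{i=1}^{k}v_i^{*}\sigma_p((c-\delta)_{+})v_i\|<\varepsilon$, the norms $\|v_i\|$ being controlled by $\delta$ and $\varepsilon$ only. Writing $(t-\delta)_{+}=g(t)\,t\,g(t)$ with the contraction $g(t):=\sqrt{(t-\delta)_{+}/t}$, and putting $d_i:=\sigma_p(g(c))v_i$, one has $d_i^{*}\sigma_p(c)d_i=v_i^{*}\sigma_p((c-\delta)_{+})v_i$ and $\|d_i\|\le\|v_i\|$. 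Hence $L(c,\varepsilon):=k$ and $N(c,\varepsilon):=\max_i\|v_i\|$ witness the $(L,N)$-fullness of $\sigma_p$ in the sense of \cite[Definition 3.2]{Na3}, and by construction neither depends on $p$.

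The main obstacle is exactly this last point: a priori both the number of terms and the sizes of the $d_i$ could blow up as $\tau_{\omega}(p)\to 0$. It is defused by Lemma \ref{lem:Lemma3.6}, whose factorized trace formula shows that the relevant dimension ratio $\kappa$ (hence $k$) and the spectral gap $\delta$ depend only on $c$; the factor $\tau_{\omega}(p)$ cancels in the inequality $k\kappa>1$ and never enters the norm bound coming from $\delta$ and $\varepsilon$. Everything else — strict comparison and the order-theoretic extraction of the $d_i$ — is supplied by Proposition \ref{pro:main-section3} and is identical to \cite[Proposition 3.7]{Na3}.
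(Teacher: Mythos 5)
Your use of Lemma \ref{lem:Lemma3.6} is correct and is indeed the heart of the matter: since $\sigma_p$ is a homomorphism, $d_{\tau_{\mathcal{B}}}(\sigma_p(b))=\tau_{\omega}(p)\,d_{\tau_{\mathcal{W}}}(b)$, so the factor $\tau_{\omega}(p)$ cancels in every comparison inequality and the constants can be chosen independently of $p$; together with strict comparison of $\mathcal{B}^{\gamma}$ (Proposition \ref{pro:main-section3}) this is exactly the substitution into \cite[Proposition 3.7]{Na3} that the paper intends. But as written your argument proves less than $(L,N)$-fullness. By \cite[Definition 3.2]{Na3} you must, for each $(c,\varepsilon)$, approximate \emph{every} positive contraction $b\in\mathcal{B}^{\gamma}_p$ by $\sum_{i=1}^{L(c,\varepsilon)}d_i^*\sigma_p(c)d_i$ with $\|d_i\|\leq N(c,\varepsilon)$, whereas you produce such a representation only for the single element $b=\rho(p\otimes s)$. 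The passage from this one element to arbitrary $b$ in the hereditary subalgebra it generates is not automatic: cutting $b$ by an approximate unit built from $\rho(p\otimes s)$ and inserting your factorization forces multipliers of norm $\lambda^{-1/2}$, where $\lambda$ depends on how fast that approximate unit acts on $b$ --- precisely the $b$-dependence the proposition forbids. (Factoring a strictly positive element through $\sigma_p(c)$ does not imply uniform fullness in general: in $C_0((0,1])$ the identity function trivially factors through itself, yet the contractions $\min\{1,nt\}$ admit no approximation within $1/2$ by sums $\sum_{i=1}^{M}d_i^*\,\mathrm{id}\,d_i$ with $M$ and $\|d_i\|$ bounded independently of $n$.) The repair keeps your key observation but applies it to $b$ itself: any positive contraction $b\in\mathcal{B}^{\gamma}_p$ satisfies $b\precsim\rho(p\otimes s)$, hence $d_{\tau_{\mathcal{B}}}(b)\leq\tau_{\omega}(p)<k\kappa\,\tau_{\omega}(p)$, so the same $p$-independent $k$ lets you compare every such $b$, not just $\rho(p\otimes s)$, with the $k$-fold amplification of $\sigma_p((c-\delta)_+)$.

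The second gap is the parenthetical claim that R{\o}rdam's lemma makes the $\|v_i\|$ ``controlled by $\delta$ and $\varepsilon$ only''. It does not: Cuntz subequivalence, and likewise the exact relation $r^*\bigl(\bigoplus_{i=1}^{k}\sigma_p((c-\delta)_+)\bigr)r=b$ supplied by Proposition \ref{pro:main-section3}(ii), comes with no bound on $\|r\|$, and R{\o}rdam's lemma only inserts an extra contraction, leaving that unknown norm in place. The uniform bound needs one more idea, a spectral gap: choose $\delta,\eta>0$ with $(c-\delta-\eta)_+\neq 0$ and $k$ with $k\,d_{\tau_{\mathcal{W}}}((c-\delta-\eta)_+)>1$, compare $b$ against $(X-\eta)_+$ where $X:=\bigoplus^{k}\sigma_p((c-\delta)_+)$, and write $(X-\eta)_+=v(X)Xv(X)$ with $v(t)=\sqrt{(t-\eta)_+/t}$. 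The exact comparison gives $r$ with $r^*(X-\eta)_+r=b$, so $b=d_0^*Xd_0$ for $d_0:=v(X)r$; then $w:=X^{1/2}d_0$ satisfies $w^*w=b$, hence $\|w\|\leq 1$, and $d_0=u(X)w$ for $u(t)=\min\{t^{-1/2},\eta^{-1/2}\}$, giving $\|d_0\|\leq\eta^{-1/2}$, a bound depending only on $(c,\varepsilon)$. With these two corrections your outline becomes the proof of \cite[Proposition 3.7]{Na3} that the paper invokes.
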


The following corollary is an immediate consequence of \cite[Proposition 3.3]{Na3} 
and the proposition above.  For finite sets $\mathcal{F}_1$ and $\mathcal{F}_2$, 
let $\mathcal{F}_1\odot \mathcal{F}_2:=\{a\otimes b \; |\; a\in \mathcal{F}_1, b\in \mathcal{F}_2\}$.

\begin{cor}\label{cor:stable-uniqueness} 
Let $\Omega$ be a compact metrizable space. For any finite subsets 
$F_1\subset C(\Omega)$, $F_2\subset \mathcal{W}$ and $\varepsilon>0$, 
there exist finite subsets $\mathcal{F}_1\subset C(\Omega)$, $\mathcal{F}_2\subset \mathcal{W}$, 
$m\in\mathbb{N}$  and $\delta >0$ such that the following holds. 
Let $p$ be a projection in $F(A\otimes\mathcal{W})^{\gamma}$ such that 
$\tau_{\omega} (p)>0$. 
For any contractive ($\mathcal{F}_1\odot \mathcal{F}_2, \delta$)-multiplicative maps 
$\varphi, \psi : C(\Omega)\otimes \mathcal{W}\to \mathcal{B}^{\gamma}_p$, 
there exist a unitary element $u$ in $M_{m^2+1}(\mathcal{B}^{\gamma}_p)^{\sim}$ and 
$z_1,z_2,...,z_m\in\Omega$ such that 
\begin{align*}
\| u & (\varphi(f\otimes b) \oplus  \overbrace{\bigoplus_{k=1}^m f(z_k)\rho (p\otimes b)\oplus \cdots \oplus\bigoplus_{k=1}^m f(z_k)\rho (p\otimes b) }^m) u^* \\
& - \psi(f\otimes b)\oplus \overbrace{\bigoplus_{k=1}^m f(z_k)\rho (p\otimes b) \oplus \cdots \oplus \bigoplus_{k=1}^m f(z_k)\rho (p\otimes b)}^m\| < \varepsilon 
\end{align*}
for any $f\in F_1$ and $b\in F_2$. 
\end{cor}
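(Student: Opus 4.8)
The plan is to deduce this directly from the abstract stable uniqueness theorem \cite[Proposition 3.3]{Na3}, by recognizing that the statement is exactly the instance of that theorem for the domain $C(\Omega)\otimes\mathcal{W}$, the target $\mathcal{B}^{\gamma}_p$, and the reference homomorphism $\sigma_p$. Concretely, for each $z\in\Omega$ I would consider the point-evaluation homomorphism
$$
\pi_z : C(\Omega)\otimes\mathcal{W}\to \mathcal{B}^{\gamma}_p, \qquad \pi_z(f\otimes b)=f(z)\rho(p\otimes b)=f(z)\sigma_p(b),
$$
which is the composition of the character $\mathrm{ev}_z$ on $C(\Omega)$ with $\sigma_p$. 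These are precisely the maps appearing in the blocks $\bigoplus_{k=1}^m f(z_k)\rho(p\otimes b)$ of the desired conclusion, so once the hypotheses of \cite[Proposition 3.3]{Na3} are in place, the conclusion is a verbatim application.

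Next I would verify the two structural hypotheses. First, $\mathcal{B}^{\gamma}$ has strict comparison by Proposition \ref{pro:main-section3}, and strict comparison passes to the hereditary subalgebra $\mathcal{B}^{\gamma}_p=\overline{\rho(p\otimes s)\mathcal{B}^{\gamma}\rho(p\otimes s)}$, since Cuntz comparison of positive elements lying in $\mathcal{B}^{\gamma}_p$ can be witnessed inside $\mathcal{B}^{\gamma}_p$; this is what drives the comparison estimates inside the abstract theorem. Second, by Proposition \ref{pro:full-inclusion} the homomorphism $\sigma_p$ is $(L,N)$-full in the sense of \cite[Definition 3.2]{Na3}, with maps $L$ and $N$ that do \emph{not} depend on the projection $p$. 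A key simplification here is that no $K$-theoretic or $KK$-hypothesis on $\varphi$ and $\psi$ is required: because $\mathcal{W}$ is $KK$-equivalent to $\{0\}$, so is $C(\Omega)\otimes\mathcal{W}$, and hence any two sufficiently multiplicative maps are automatically comparable at the level of $KK$. Thus the only genuine inputs to \cite[Proposition 3.3]{Na3} are the strict comparison of the target and the fullness of $\sigma_p$.

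Finally, I would feed $F_1$, $F_2$, $\varepsilon$ and the $p$-independent maps $L$, $N$ into \cite[Proposition 3.3]{Na3}, obtaining finite sets $\mathcal{F}_1$, $\mathcal{F}_2$, an integer $m$ and $\delta>0$, together with a unitary $u\in M_{m^2+1}(\mathcal{B}^{\gamma}_p)^{\sim}$ and points $z_1,\dots,z_m\in\Omega$ realizing the stated approximate unitary equivalence; the $m$ blocks of $m$ evaluations (giving $m^2$ summands, plus the original map, hence the size $m^2+1$) are exactly the output form of the abstract theorem. The step I expect to require the most care --- and the real point of the corollary --- is the uniformity in $p$: the constants $\mathcal{F}_1,\mathcal{F}_2,m,\delta$ must be fixed once and for all, before any projection $p$ is specified. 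This is guaranteed precisely because $L$ and $N$ from Proposition \ref{pro:full-inclusion} are $p$-independent, so the outputs of \cite[Proposition 3.3]{Na3}, which depend only on $F_1$, $F_2$, $\varepsilon$, $L$ and $N$, are likewise independent of $p$.
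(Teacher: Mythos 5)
Your proposal is correct and follows exactly the paper's route: the corollary is obtained by feeding the $(L,N)$-fullness of $\sigma_p$ from Proposition \ref{pro:full-inclusion} (whose uniformity in $p$ is precisely what makes $\mathcal{F}_1$, $\mathcal{F}_2$, $m$, $\delta$ independent of $p$) into the abstract stable uniqueness theorem \cite[Proposition 3.3]{Na3}. The only cosmetic difference is bookkeeping: in the paper, strict comparison of $\mathcal{B}^{\gamma}$ (Proposition \ref{pro:main-section3}) is consumed in the proof of Proposition \ref{pro:full-inclusion} rather than appearing as a separate hypothesis of \cite[Proposition 3.3]{Na3}, so your additional verification of comparison in the hereditary subalgebra $\mathcal{B}^{\gamma}_p$ is not needed.
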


\section{Classification of normal elements in $F(A\otimes\mathcal{W})^{\gamma}$}
\label{sec:normal}

In this section we shall classify certain normal elements in $F(A\otimes\mathcal{W})^{\gamma}$ 
up to unitary equivalence. Furthermore, we shall consider the comparison theory for certain 
projections in $F(A\otimes\mathcal{W})^{\gamma}$. We assume that $\Omega$ is a compact 
metrizable space in this section. 

Using Proposition \ref{pro:key-pro} and Proposition \ref{pro:MvN-u} instead of 
\cite[Proposition 4.1]{Na3}, \cite[Proposition 4.2]{Na3} and \cite[Proposition 4.8]{Na3}, 
we obtain the following lemma by the same proof as \cite[Lemma 5.1]{Na3}. 
See also \cite[Lemma 4.1]{M2} and \cite[Lemma 4.2]{M2}. 

\begin{lem}\label{lem:m4.2}
Let $F$ be a finite subset of $C(\Omega)$ 
and $\varepsilon >0$. 
Suppose that $\varphi$ and $\psi$ are unital homomorphisms from $C(\Omega)$ to 
$F(A\otimes\mathcal{W})^{\gamma}$ 
such that 
$
\tau_{\omega} \circ \varphi  = \tau_{\omega} \circ \psi .
$ 
Then there exist a projection $p\in F(A\otimes\mathcal{W})^{\gamma}$, 
$(F,\varepsilon)$-multiplicative unital c.p. maps $\varphi^{\prime}$ and $\psi^{\prime}$ from 
$C(\Omega)$ to $pF(A\otimes\mathcal{W})^{\gamma}p$, a unital homomorphism $\sigma$ 
from $C(\Omega)$ to $(1-p)F(A\otimes\mathcal{W})^{\gamma}(1-p)$ with finite-dimensional 
range and a unitary element $u\in F(A\otimes\mathcal{W})^{\gamma}$ 
such that 
$$
0 <\tau_{\omega} (p) < \varepsilon, \;
\| \varphi (f)- (\varphi^{\prime}(f)+ \sigma (f))\| <\varepsilon,  \;
\| \psi (f)- u(\psi^{\prime}(f)+ \sigma (f))u^*\| <\varepsilon 
$$
for any $f\in F$. 
\end{lem}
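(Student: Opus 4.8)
The plan is to mimic the structure of \cite[Lemma 5.1]{Na3}, replacing each structural input used there with its $\gamma$-equivariant analogue established earlier in this paper. The goal is, roughly, to split the two homomorphisms $\varphi,\psi$ into a ``small error part'' carried by a projection $p$ of arbitrarily small trace and a common ``finite-dimensional model'' $\sigma$, in such a way that on the complement $1-p$ the two maps agree up to the unitary $u$. First I would exploit the hypothesis $\tau_\omega\circ\varphi=\tau_\omega\circ\psi$: since $C(\Omega)$ is commutative and $\tau_\omega$ restricts to a common state on it, the two maps push forward $\tau_\omega$ to the \emph{same} Borel probability measure $\mu$ on $\Omega$. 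I would then choose a finite Borel partition of $\Omega$ into small-diameter pieces, refined so that every $f\in F$ varies by less than $\varepsilon$ on each piece, and select points $z_1,\dots,z_k$ in the pieces; this produces a single finite-dimensional (multiplicity/evaluation) homomorphism which will become $\sigma$.

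The key steps, in order, are as follows. First, I would use Proposition \ref{pro:key-pro}(ii) to realize, inside $F(A\otimes\mathcal{W})^{\gamma}$, mutually orthogonal projections whose traces match the masses $\mu$ assigns to the chosen partition pieces, thereby building the finite-dimensional-range homomorphism $\sigma$ on $(1-p)F(A\otimes\mathcal{W})^{\gamma}(1-p)$ with $\tau_\omega$-values prescribed by $\mu$; the leftover projection $p$ absorbs the approximation defect and can be forced to satisfy $0<\tau_\omega(p)<\varepsilon$ by making the partition fine enough, again using Proposition \ref{pro:key-pro}(ii) to split off a projection of small trace. Second, I would define the $(F,\varepsilon)$-multiplicative unital c.p. maps $\varphi',\psi'$ into $pF(A\otimes\mathcal{W})^{\gamma}p$ as the ``compressed remainders'' of $\varphi,\psi$ relative to the model $\sigma$, so that $\|\varphi(f)-(\varphi'(f)+\sigma(f))\|<\varepsilon$ and similarly for $\psi$ before conjugation. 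Third — and this is where the real work sits — I would produce the unitary $u$ conjugating $\psi'(f)+\sigma(f)$ to (an approximant of) $\psi(f)$ by comparing the two finite-dimensional models built from $\varphi$ and from $\psi$: because both have the same $\tau_\omega$-masses, the corresponding spectral projections are Murray--von Neumann equivalent in $F(A\otimes\mathcal{W})^{\gamma}$, and here I would invoke Proposition \ref{pro:MvN-u} to upgrade Murray--von Neumann equivalence to \emph{unitary} equivalence (the trace-one exceptional case being harmless since each relevant projection has trace strictly below $1$). Patching these partial unitaries together over the finitely many partition pieces yields the single unitary $u\in F(A\otimes\mathcal{W})^{\gamma}$.

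The main obstacle I expect is the third step: constructing one global unitary $u$ from the piecewise equivalences while keeping the resulting error uniformly below $\varepsilon$ on all of $F$ simultaneously. The matching of traces gives equivalence of each spectral block, but assembling the blocks requires that the orthogonal sums of projections on the $\varphi$-side and the $\psi$-side be compared \emph{as sums}, which is exactly what Proposition \ref{pro:MvN-u} (together with the fact that $\mathcal{W}\otimes M_n\cong\mathcal{W}$, giving matrix amplification inside $F(A\otimes\mathcal{W})^{\gamma}$ via Proposition \ref{pro:key-pro}(i)) is designed to handle; the delicacy is bookkeeping the approximation so that the off-block discrepancies, which are controlled by the diameter of the partition pieces and the oscillation of $f\in F$, all fall under the single tolerance $\varepsilon$. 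Since the entire argument is a structural transcription of \cite[Lemma 5.1]{Na3} with the unital inputs \cite[Proposition 4.1]{Na3}, \cite[Proposition 4.2]{Na3}, \cite[Proposition 4.8]{Na3} replaced verbatim by our equivariant Proposition \ref{pro:key-pro} and Proposition \ref{pro:MvN-u}, I would not reprove the combinatorial/approximation machinery but simply verify that each cited step survives the substitution, which it does because those propositions supply precisely the comparison and projection-existence data the original proof consumes.
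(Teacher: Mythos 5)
Your proposal is correct and follows essentially the same route as the paper: the paper's proof is precisely the observation that the argument of \cite[Lemma 5.1]{Na3} (itself modeled on \cite[Lemmas 4.1, 4.2]{M2}) goes through verbatim once \cite[Propositions 4.1, 4.2, 4.8]{Na3} are replaced by Proposition \ref{pro:key-pro} and Proposition \ref{pro:MvN-u}, which is exactly the substitution you describe. Your sketch of the internal mechanics (common measure, fine partition, evaluation model $\sigma$, small-trace corner $p$, and trace-matching plus Proposition \ref{pro:MvN-u} to produce $u$) matches the cited argument, so no further comment is needed.
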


The following theorem is a variant of \cite[Theorem 5.2]{Na3}. See  also \cite[Theorem 4.5]{M2}. 

\begin{thm}\label{thm:unitary-equivalence-ed}
Let $F_1$ be a finite subset of $C(\Omega)$, 
$F_2$ a finite subset of $A$ and $F_3$ a finite subset of $\mathcal{W}$, and let $\varepsilon >0$. 
Then there exist mutually orthogonal positive elements $h_1,h_2,...,h_{l}$ in $C(\Omega)$ of 
norm one such that the following holds. 
For any $\nu >0$, there exist finite subsets $\mathcal{G}_1\subset C(\Omega)$, 
$\mathcal{G}_2\subset A\otimes \mathcal{W}$ and $\delta >0$ 
such that the following holds. 
If $\varphi$ and $\psi$ are  unital c.p. maps from $C(\Omega)$ to 
$M(A\otimes\mathcal{W})$ such that  
$$
\tau (\varphi (h_i)) \geq \nu, \; \forall i\in\{1,2,...,l\}, 
$$
$$
\| [\varphi (f), x] \| < \delta , \; \| [\psi(f),x ] \| < \delta,  
\; \forall f\in \mathcal{G}_1, x\in \mathcal{G}_2, 
$$
$$
\| (\varphi (f_1f_2)- \varphi (f_1)\varphi (f_2))x\| < \delta, \; 
\| (\psi (f_1f_2)- \psi (f_1)\psi (f_2))x\| < \delta, \; \forall f_1,f_2\in \mathcal{G}_1, 
x\in \mathcal{G}_2,
$$
$$
\| (\gamma_g (\varphi (f))-\varphi (f))x \| < \delta, \; 
\| (\gamma_g (\psi (f))-\psi (f))x \| < \delta, \; \forall g\in G, f\in \mathcal{G}_1, x\in \mathcal{G}_2,
$$
$$
| \tau (\varphi (f)) -\tau (\psi (f)) | < \delta, \; \forall  f\in \mathcal{G}_1,
$$
then there exists a contraction $u$ in $(A\otimes\mathcal{W})^{\sim}$ such that 
$$
\| (a\otimes b) (u^*u -1)\| < \varepsilon, \;
\| (a\otimes b)(uu^* -1) \| < \varepsilon, \;
\| (a\otimes b)(\gamma_g(u)-u) \| < \varepsilon,
$$
$$
\| u\varphi (f)(a\otimes b)u^* - \psi(f)(a\otimes b) \| < \varepsilon 
$$
for any $f\in F_1$, $a\in F_2$, $b\in F_3$ and $g\in G$. 
\end{thm}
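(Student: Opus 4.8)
The plan is to reduce this finite, approximate, equivariant statement to the two structural tools already in hand, the decomposition Lemma \ref{lem:m4.2} and the stable uniqueness Corollary \ref{cor:stable-uniqueness}, by a reindexation argument, exactly as in \cite[Theorem 5.2]{Na3} and \cite[Theorem 4.5]{M2}. First I would choose the elements $h_1,\dots,h_l$ directly, independently of any limiting argument: take mutually orthogonal norm-one positive functions subordinate to a finite open cover of $\Omega$ whose mesh is so small that every $f\in F_1$ varies by less than $\varepsilon$ on each member of the cover. With the $h_i$ fixed, the assertion ``for any $\nu$ there exist $\mathcal{G}_1,\mathcal{G}_2,\delta$'' is what I would prove by contradiction. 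If it failed for some $\nu>0$, then choosing increasing finite sets $\mathcal{G}_1^{(n)}\uparrow C(\Omega)$, $\mathcal{G}_2^{(n)}\uparrow A\otimes\mathcal{W}$ and $\delta_n\downarrow 0$ would furnish unital c.p.\ maps $\varphi_n,\psi_n\colon C(\Omega)\to M(A\otimes\mathcal{W})$ satisfying all the approximate hypotheses at level $n$ but for which no contraction $u$ realizes the conclusion with tolerance $\varepsilon$.

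The approximate commutation, multiplicativity, $\gamma$-invariance and trace conditions guarantee that $\Phi(f):=[(\varphi_n(f))_n]$ and $\Psi(f):=[(\psi_n(f))_n]$ are well-defined unital homomorphisms from $C(\Omega)$ into $F(A\otimes\mathcal{W})^{\gamma}$ with $\tau_{\omega}\circ\Phi=\tau_{\omega}\circ\Psi$, using the identification of $F(A\otimes\mathcal{W})$ with a quotient of $M(A\otimes\mathcal{W})_{\omega}$. Crucially, the hypothesis $\tau(\varphi_n(h_i))\geq\nu$ passes to the limit as $\tau_{\omega}(\Phi(h_i))\geq\nu>0$, so the pushforward probability measure $\tau_{\omega}\circ\Phi$ charges every member of the cover; by the same trace identity the same holds for $\Psi$. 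This is precisely the fullness input that lets Proposition \ref{pro:full-inclusion} and hence Corollary \ref{cor:stable-uniqueness} be applied below. Since, after reindexation, the negated conclusion is exactly the non-existence of a unitary $\bar u\in F(A\otimes\mathcal{W})^{\gamma}$ with $\bar u\,\Phi(f)\,\bar u^{*}=\Psi(f)$ for all $f\in F_1$, it suffices to produce such a $\bar u$.

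To build $\bar u$ I would apply Lemma \ref{lem:m4.2} to obtain a small projection $p\in F(A\otimes\mathcal{W})^{\gamma}$ with $0<\tau_{\omega}(p)<\varepsilon$, approximately multiplicative unital c.p.\ maps $\varphi',\psi'$ into $pF(A\otimes\mathcal{W})^{\gamma}p$, a common finite-dimensional-range homomorphism $\sigma(f)=\sum_j f(z_j)e_j$ into $(1-p)F(A\otimes\mathcal{W})^{\gamma}(1-p)$, and a unitary $w$ with $\Phi\approx\varphi'\oplus\sigma$ and $\Psi\approx w(\psi'\oplus\sigma)w^{*}$ on $F_1$. This reduces matters to matching $\varphi'$ and $\psi'$, which I would do by transporting the corner maps into the target algebra through the homomorphism $\rho$ of Section \ref{sec:stable-uniqueness}: the compositions $\rho\circ(\varphi'\otimes\mathrm{id})$ and $\rho\circ(\psi'\otimes\mathrm{id})$ are contractive, approximately multiplicative maps $C(\Omega)\otimes\mathcal{W}\to\mathcal{B}^{\gamma}_p$, to which Corollary \ref{cor:stable-uniqueness} applies. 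The resulting unitary equivalence holds only after adjoining $m$ copies of each point evaluation $f(z_k)\rho(p\otimes b)$; these extra copies would be absorbed into the finite-dimensional part $\sigma$ by using the comparison results of Proposition \ref{pro:key-pro} and Proposition \ref{pro:MvN-u} to cut the projections $e_j$ (whose traces nearly exhaust $1-\tau_{\omega}(p)$, since the spectral measure charges every $z_k$ by the fullness above) into the required point-evaluation copies up to unitary equivalence. Combining the corner equivalence with this absorption and with $w$ then yields the sought unitary $\bar u$, contradicting the failure of the conclusion.

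The main obstacle is this last combination step. One must thread the unitary produced by Corollary \ref{cor:stable-uniqueness} inside a matrix amplification $M_{m^{2}+1}(\mathcal{B}^{\gamma}_p)^{\sim}$ over the target algebra back to an honest unitary in the fixed-point algebra $F(A\otimes\mathcal{W})^{\gamma}$, matching the $m$ point-evaluation copies against the summands of $\sigma$ with the correct traces while keeping every intermediate object $\gamma$-invariant. This hinges on the strict comparison of $\mathcal{B}^{\gamma}$ (Proposition \ref{pro:main-section3}) together with the projection comparison of Section \ref{sec:Pre}, and it is exactly the point at which the scalar argument of \cite[Theorem 5.2]{Na3} and \cite[Theorem 4.5]{M2} must be upgraded to the equivariant, $A$-tensored framework developed in Sections \ref{sec:target} and \ref{sec:stable-uniqueness}.
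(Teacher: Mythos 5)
Your proposal reproduces the paper's argument essentially step for step: the choice of the $h_i$ from a fine cover, the contradiction/reindexation producing unital homomorphisms $\Phi,\Psi\colon C(\Omega)\to F(A\otimes\mathcal{W})^{\gamma}$ with $\tau_{\omega}\circ\Phi=\tau_{\omega}\circ\Psi$, the decomposition via Lemma \ref{lem:m4.2}, the transport of the corner maps into $\mathcal{B}^{\gamma}_p$ via $\rho$ followed by Corollary \ref{cor:stable-uniqueness}, the absorption of the $m^2$ point-evaluation summands into the finite-dimensional part $\sigma$, and the recombination with the unitary $w$. There is, however, a genuine gap in your logical packaging. You reduce the contradiction to producing a unitary $\bar u\in F(A\otimes\mathcal{W})^{\gamma}$ with \emph{exact} intertwining $\bar u\Phi(f)\bar u^{*}=\Psi(f)$, and claim the construction "yields the sought unitary $\bar u$". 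It cannot: first, Lemma \ref{lem:m4.2}, Corollary \ref{cor:stable-uniqueness} and the absorption step each contribute errors of order $\varepsilon$, so the intertwining is only approximate; second, and more fundamentally, the unitary coming out of Corollary \ref{cor:stable-uniqueness} lives in $M_{m^2+1}(\mathcal{B}^{\gamma}_p)^{\sim}$, and after recombination one gets a unitary $V\in(\mathcal{B}^{\gamma})^{\sim}$ whose representatives $(v_n)_n\in\mathcal{A}^{\sim}$ commute asymptotically only with $A\otimes 1_{\mathcal{W}^{\sim}}$, not with $A\otimes\mathcal{W}$. Since $\rho$ maps into $\mathcal{B}^{\gamma}$ and is not invertible, $V$ cannot be transported back into $F(A\otimes\mathcal{W})^{\gamma}$. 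What the construction actually produces is the product $u_n=w_nv_n$ of representatives of $w\in F(A\otimes\mathcal{W})^{\gamma}$ and of $V$: a contraction in $(A\otimes\mathcal{W})^{\sim}$ satisfying the four conditions of the theorem, relative to $A\otimes\mathcal{W}$ and within $\varepsilon$, for $n$ in a set of the ultrafilter. That is strictly weaker than your announced goal but is exactly the conclusion of the theorem and enough for the contradiction; indeed this is precisely why the statement is phrased with contractions and relative norms rather than with a unitary in $F(A\otimes\mathcal{W})^{\gamma}$. The repair is to drop the exact-unitary reduction and run the contradiction against these contractions directly, as the paper does.

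Two smaller points. For the absorption step you cite Proposition \ref{pro:key-pro} and Proposition \ref{pro:MvN-u}; but cutting projections of prescribed trace and then inferring Murray-von Neumann equivalence from equality of traces would be circular, since that inference is Corollary \ref{cor:comparison}, proved later \emph{using} the present theorem. The correct tool is strict comparison in $F(A\otimes\mathcal{W})^{\gamma}$, i.e.\ Proposition \ref{thm:Matui-Sato}(ii), which produces the projections $p_{j,k}$ Murray-von Neumann equivalent to $p$ under the spectral projections of $\sigma$. Relatedly, the hypothesis $\tau(\varphi(h_i))\geq\nu$ is not what makes Proposition \ref{pro:full-inclusion} applicable (that only needs $\tau_{\omega}(p)>0$, automatic from Lemma \ref{lem:m4.2}); its role is in this absorption step, via the choice $\varepsilon'\leq\nu/(m^2+2)$, ensuring $\sigma$ has enough trace near each $z_k$ to accommodate the $m^2$ copies of $p$. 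You do use it correctly there, but the attribution to fullness should be deleted.
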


\begin{proof}

We may assume that every element in $F_2$ and $F_3$ is positive and of norm one. 
Take positive elements $h_1,h_2,...,h_l$ in $C(\Omega)$ by the same way as  in the proof of 
\cite[Theorem 5.2]{Na3}. We will show that $h_1,h_2,...,h_l$ have the desired property. 
On the contrary, suppose that $h_1,h_2,...,h_l$ did not have the desired property. Then there exists a 
positive number $\nu$ satisfying the following: For any $n\in\mathbb{N}$, there exist unital 
c.p. maps $\varphi_n, \psi_n : C(\Omega)\to M(A\otimes\mathcal{W})$ such that 
$$
\tau (\varphi_n (h_i)) \geq \nu, \: \forall i \in\{1,2,...,l\},
$$
$$
\| [\varphi_n(f_1),x]\| \to 0, \; \| [\psi_n(f_1),x]\| \to 0, \; \| (\varphi_n(f_1f_2)- \varphi_n(f_1)
\varphi_n(f_2))x\| \to 0,  
$$
$$
\| (\psi_n(f_1f_2)- \psi_n (f_1)\psi_n(f_2))x\| \to 0, \; \| (\gamma_g (\varphi_n (f_1))-\varphi_n (f_1))x 
\|\to 0, 
$$
$$
\| (\gamma_g (\psi_n (f_1))-\psi_n (f_1))x \|\to 0, \; |\tau (\varphi_n(f_1))-\tau (\psi_n(f_1))| \to 0
$$
as $n\to\infty$ for any $f_1,f_2\in C(\Omega)$, $x\in A\otimes\mathcal{W}$ and $g\in G$ and 
$$
\max_{f\in F_1, a\in F_2, b\in F_3} 
\| u\varphi_n (f)(a\otimes b)u^* - \psi_n (f)(a\otimes b)\| \geq \varepsilon 
$$
for any contraction $u$ in $(A\otimes\mathcal{W})^{\sim}$ satisfying 
$$
\| (a\otimes b)(\gamma_g(u)-u) \| < \varepsilon, \; 
\|(a\otimes b) (u^*u -1) \| < \varepsilon, \;
\|(a\otimes b) (uu^* -1) \| < \varepsilon
$$
for any $a\in F_2$, $b\in F_3$ and $g\in G$. 

Define homomorphisms $\varphi$ and $\psi$ from $C(\Omega)$ to 
$F(A\otimes\mathcal{W})^{\gamma}$ by 
$\varphi (f) := [(\varphi_n(f))_n]$ and $\psi (f):= [(\psi_n(f))_n]$ for any $f\in C(\Omega)$. 
Then we have 
$$
\tau_{\omega} \circ \varphi= \tau_{\omega} \circ \psi \quad \text{and} \quad 
\tau_{\omega}(\varphi (h_i))\geq \nu 
$$
for any $i=1,2,...,l$. 
 
We obtain finite subsets $\mathcal{F}_1\subset C(\Omega)$, $\mathcal{F}_2\subset \mathcal{W}$, 
$m\in\mathbb{N}$  and $\delta >0$ by applying Corollary \ref{cor:stable-uniqueness} to $F_1$ 
and $F_3$ and $\varepsilon /7$. Put 
$$
F_1^{\prime}:= F_1\cup \mathcal{F}_1 \cup \{h_1, h_2,...,h_l \} \quad \text{and} \quad 
\varepsilon^{\prime}:= 
\min \left\{\frac{\varepsilon}{7}, \frac{\delta}{\max\{\| b\| \; |\; b\in \mathcal{F}_2\}}, 
\frac{\nu}{(m^2+2)} \right\}.
$$

Applying Lemma \ref{lem:m4.2} to $F_1^{\prime}$, $\varepsilon^{\prime}$, $\varphi$ and $\psi$, 
there exist a projection $p\in F(A\otimes\mathcal{W})^{\gamma}$, 
$(F_1^{\prime},\varepsilon^{\prime})$-multiplicative unital c.p. maps $\varphi^{\prime}$ 
and $\psi^{\prime}$ from $C(\Omega)$ to $pF(A\otimes\mathcal{W})^{\gamma}p$, a unital 
homomorphism $\sigma$ from $C(\Omega)$ to $(1-p)F(A\otimes\mathcal{W})^{\gamma}(1-p)$ 
with finite-dimensional range and a unitary element $w\in F(A\otimes\mathcal{W})^{\gamma}$ 
such that 
$$
0 <\tau_{\omega} (p) < \varepsilon^{\prime}, \;
\| \varphi (f)- (\varphi^{\prime}(f)+ \sigma (f))\| <\varepsilon^{\prime},  \;
\| \psi (f)- w(\psi^{\prime}(f)+ \sigma (f))w^*\| <\varepsilon^{\prime} 
$$
for any $f\in F_1^{\prime}$.  
The Choi-Effros lifting theorem implies that there exist sequences of contractive c.p. maps 
$\varphi_n^{\prime}$, $\psi_n^{\prime}$ and $\sigma_n$ from $C(\Omega )$ to 
$A\otimes\mathcal{W}$ such that 
$\varphi^{\prime}(f)=[(\varphi_n^{\prime}(f))_n]$, $\psi^{\prime}(f)=[(\psi_n^{\prime}(f))_n]$ 
and $\sigma (f)=[(\sigma_n (f))_n]$ for any $f\in C(\Omega )$. 
By \cite[Proposition 4.9]{Na3}, there exists a unitary element $(w_n)_n$ in 
$(A\otimes\mathcal{W})^{\sim}_{\omega}$ such that $w=[(w_n)_n]$. Note that we have 
$(x\gamma_g(w_n))_n=(xw_n)_n$ for any $g\in G$ and $x\in A\otimes\mathcal{W}$, 
$$
\lim_{n\to\omega}\| \varphi_n (f)(a\otimes b)- (\varphi_n^{\prime}(f)+ \sigma_n (f))(a\otimes b)\| 
<\frac{\varepsilon}{7}  \eqno{(1)}
$$
and 
$$
\lim_{n\to\omega}\| \psi_n (f)(a\otimes b)- w_n(\psi_n^{\prime}(f)+ \sigma_n (f))(a\otimes b)w_n^*\| 
<\frac{\varepsilon}{7}  \eqno{(2)}
$$
for any $f\in  F_1^{\prime}$, $a\in F_2$ and $b\in F_3$. 

Define c.p. maps $\Phi^{\prime}$ and $\Psi^{\prime}$ from $C(\Omega)\otimes \mathcal{W}$ 
to $\mathcal{B}_p^{\gamma}$ by 
$$
\Phi^{\prime}:= \rho \circ (\varphi^{\prime}\otimes \mathrm{id}_{\mathcal{W}})\quad 
\text{and} \quad 
\Psi^{\prime}:= \rho \circ (\psi^{\prime}\otimes \mathrm{id}_{\mathcal{W}}).
$$
Then $\Phi^{\prime}$ and $\Psi^{\prime}$ are contractive 
$(\mathcal{F}_1\odot \mathcal{F}_2, \delta)$-multiplicative maps. Hence Corollary 
\ref{cor:stable-uniqueness} implies that there exist a unitary element 
$U$ in $M_{m^2+1}(\mathcal{B}^{\gamma}_p)^{\sim}$ and $z_1,z_2,...,z_m\in\Omega$ such that 
\begin{align*}
\| U & (\Phi^{\prime}(f\otimes b) \oplus  \overbrace{\bigoplus_{k=1}^m f(z_k)\rho (p\otimes b)\oplus \cdots \oplus\bigoplus_{k=1}^m f(z_k)\rho (p\otimes b) }^m) U^* \\
& - \Psi^{\prime}(f\otimes b)\oplus \overbrace{\bigoplus_{k=1}^m f(z_k)\rho (p\otimes b) \oplus \cdots \oplus \bigoplus_{k=1}^m f(z_k)\rho (p\otimes b)}^m\| < \frac{\varepsilon}{7} 
\end{align*}
for any $f\in F_1$ and $b\in F_3$. 

Using Proposition \ref{thm:Matui-Sato} instead of \cite[Proposition 4.1]{Na3}, the same argument 
as in the proof of \cite[Theorem 5.2]{Na3} shows that there exist mutually orthogonal projections 
$\{p_{j,k} \}_{j,k=1}^m$ in $(1-p)F(A\otimes\mathcal{W})^{\gamma}(1-p)$ and a homomorphism 
$\sigma^{\prime\prime}: C(\Omega)\to (1-p-q)F(A\otimes\mathcal{W})^{\gamma}(1-p-q)$ where 
$q=\sum_{j,k=1}^mp_{j,k}$ such that 
$$
\| \sigma (f) - \left(\sum_{j=1}^m\sum_{k=1}^mf(z_k)p_{j,k} + \sigma^{\prime\prime} (f) \right) \| 
< \frac{2\varepsilon}{7}
$$
for any $f\in F_1$ and $p_{j,k}$ is Murray-von Neumann equivalent to $p$ for any $j,k=1,2,...,m$. 
Define a homomorphism $\hat{\sigma}$ from $C(\Omega)$ to 
$F(A\otimes\mathcal{W})^{\gamma}$ by 
$$
\hat{\sigma}(f):= \sum_{j=1}^m\sum_{k=1}^mf(z_k)p_{j,k}+ \sigma^{\prime\prime} (f)
$$ 
for any $f\in C(\Omega)$. 
By the Choi-Effros lifting theorem, there exists a sequence of contractive 
c.p. maps $\hat{\sigma}_n$ from $C(\Omega)$ to $A\otimes\mathcal{W}$ such that 
$\hat{\sigma}(f)=[(\hat{\sigma}_n (f))_n]$. Note that we have 
$$
\lim_{n\to \omega} \|\sigma_n(f)(a\otimes b)- \hat{\sigma}_n(f)(a\otimes b) \|< \frac{2\varepsilon}{7} 
 \eqno{(3)}
$$
for any $f\in F_1$, $a\in F_2$ and $b\in F_3$. 

Since we can regard 
$\Phi^{\prime}(f\otimes b)+ \sum_{j=1}^m\sum_{k=1}^mf(z_k)\rho (p_{j,k}\otimes b)\in 
\mathcal{B}_{p+q}^{\gamma}$ as an element in  $M_{m^2+1}(\mathcal{B}^{\gamma}_p)$, 
the same argument as in the proof of \cite[Theorem 5.2]{Na3} shows that there exists a unitary 
element $V$ in $(\mathcal{B}^{\gamma})^{\sim}$ such that 
$$
\| V(\Phi^{\prime} (f\otimes b) + \rho (\hat{\sigma}(f)\otimes b))V^*- (\Psi^{\prime}(f\otimes b)
+\rho (\hat{\sigma}(f)\otimes b)) \|< \frac{\varepsilon}{7}
$$
for any $f\in F_1$ and $b\in F_3$. 
Let $(v_n)_n$ be a contraction in $\mathcal{A}^{\sim}$ such that $V=[(v_n)_n]$. 
Then we have $((a\otimes 1_{\mathcal{W}^{\sim}})v_n^*v_n)_n
=((a\otimes 1_{\mathcal{W}^{\sim}})v_nv_n^*)_n=a\otimes 1_{\mathcal{W}^{\sim}}$ and 
$((a\otimes 1_{\mathcal{W}^{\sim}})\gamma_g(v_n))_n=((a\otimes 1_{\mathcal{W}^{\sim}})v_n)_n$ 
for any $g\in G$ and $a\in A$. 
Furthermore, we see that 
$$
\lim_{n\to\omega} \|v_n(\varphi^{\prime}_n(f)+\hat{\sigma}_n(f))(a\otimes b) v_n^*  
-(\psi^{\prime}_n(f)+\hat{\sigma}_n(f))(a\otimes b)\| < \frac{\varepsilon}{7}  \eqno{(4)}
$$
for any $f\in F_1$, $a\in F_2$ and $b\in F_3$ by Proposition \ref{pro:homrho}. 

Put $(u_n)_n:=(w_nv_n)_n\in ((A\otimes\mathcal{W})^{\sim})^{\omega}$.  Then we have  
$$
((a\otimes b)u_n^*u_n)_n= ((a\otimes b)v_n^*v_n)_n= a\otimes b
$$ 
and 
$$
((a\otimes b)u_nu_n^*)_n=(w_n(a\otimes b)v_nv_n^*w_n)_n=(w_n(a\otimes b)w_n^*)_n=a\otimes b
$$
for any $a\in A$ and $b\in\mathcal{W}$.  
Also, we have 
\begin{align*}
((a\otimes b)\gamma_g(u_n))_n
& =((a\otimes b)\gamma_g(w_n)\gamma_g(v_n))_n=((a\otimes b)w_n\gamma_g(v_n))_n \\
& =(w_n(a\otimes b)\gamma_g(v_n))_n=(w_n(a\otimes b)v_n)_n=((a\otimes b)u_n)_n
\end{align*}
for any $g\in G$, $a\in A$ and $b\in\mathcal{W}$. By (1), (2), (3) and (4), we see that 
$$
\lim_{n\to\omega} \| u_n\varphi_n(f)(a\otimes b)u_n^* - \psi_n (f)(a\otimes b) \| < \varepsilon 
$$
for any $f\in F_1$, $a\in F_2$ and $b\in F_3$. Therefore, taking a sufficiently large $n$, we obtain 
a contradiction. 
Consequently, the proof is complete. 
\end{proof}

The following theorem is the main result in this section. 

\begin{thm}\label{thm:classification-normal}
Let $N_1$ and $N_2$ be normal elements in $F(A\otimes\mathcal{W})^{\gamma}$ such that 
$\mathrm{Sp} (N_1)=\mathrm{Sp} (N_2)$ and 
$\tau_{\omega} (f(N_1)) >0$ for any $f\in C(\mathrm{Sp}(N_1))_{+}\setminus \{0\}$. 
Then there exists a unitary element $u$ in $F(A\otimes\mathcal{W})^{\gamma}$ 
such that $uN_1u^* =N_2$ if and only if 
$
\tau_{\omega} (f(N_1))= \tau_{\omega} (f(N_2))
$ 
for any $f\in C(\mathrm{Sp}(N_1))$. 
\end{thm}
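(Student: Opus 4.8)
The plan is to prove the classification of normal elements via a standard intertwining-by-reindexing (Elliott–Evans style) argument built on the stable uniqueness theorem just established. The ``only if'' direction is immediate: if $u$ is a unitary in $F(A\otimes\mathcal{W})^{\gamma}$ with $uN_1u^*=N_2$, then for any $f\in C(\mathrm{Sp}(N_1))$ we have $f(N_2)=f(uN_1u^*)=uf(N_1)u^*$, and since $\tau_{\omega}$ is a trace on $F(A\otimes\mathcal{W})^{\gamma}$ we get $\tau_{\omega}(f(N_2))=\tau_{\omega}(uf(N_1)u^*)=\tau_{\omega}(f(N_1))$. So the content is entirely in the ``if'' direction.

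For the ``if'' direction, put $\Omega:=\mathrm{Sp}(N_1)=\mathrm{Sp}(N_2)$, which is a compact metrizable subset of $\mathbb{C}$, and let $z\in C(\Omega)$ denote the coordinate function. The normal elements $N_1,N_2$ give unital homomorphisms $\varphi,\psi:C(\Omega)\to F(A\otimes\mathcal{W})^{\gamma}$ by $\varphi(f)=f(N_1)$, $\psi(f)=f(N_2)$, and recovering a conjugating unitary with $uN_1u^*=N_2$ is the same as producing a unitary $u\in F(A\otimes\mathcal{W})^{\gamma}$ with $u\varphi(f)u^*=\psi(f)$ for all $f$. The hypothesis $\tau_{\omega}(f(N_1))=\tau_{\omega}(f(N_2))$ says exactly $\tau_{\omega}\circ\varphi=\tau_{\omega}\circ\psi$, which is the trace-matching input to Theorem \ref{thm:unitary-equivalence-ed}; the strict-positivity hypothesis $\tau_{\omega}(f(N_1))>0$ for nonzero positive $f$ guarantees the lower-bound conditions $\tau(\varphi(h_i))\geq\nu$ on the distinguished elements $h_1,\dots,h_l$ that the theorem furnishes. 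First I would fix a decreasing sequence of finite subsets $F_1^{(m)}\subset C(\Omega)$ increasing to a dense set (always including $z$) together with $\varepsilon_m\downarrow 0$, and finite subsets $F_2^{(m)}\subset A$, $F_3^{(m)}\subset\mathcal{W}$ exhausting approximate units, then lift $\varphi,\psi$ to sequences $(\varphi_n),(\psi_n)$ of unital c.p.\ maps $C(\Omega)\to M(A\otimes\mathcal{W})$ via Choi–Effros and note that along $\omega$ they satisfy (asymptotically) every multiplicativity, asymptotic-centrality, $\gamma$-invariance and trace-matching hypothesis of Theorem \ref{thm:unitary-equivalence-ed}.

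Applying Theorem \ref{thm:unitary-equivalence-ed} at level $m$ then produces, for each $m$, a contraction $u^{(m)}\in(A\otimes\mathcal{W})^{\sim}$ that is approximately unitary, approximately $\gamma_g$-fixed, and approximately intertwines $\varphi_n$ and $\psi_n$ up to $\varepsilon_m$ on $F_1^{(m)}\odot(F_2^{(m)}\times F_3^{(m)})$ for $n$ in an $\omega$-large set. By the usual diagonal/reindexing procedure over $\omega$ (choosing the index $n$ large enough at each stage, as in the passage from Theorem \ref{thm:unitary-equivalence-ed} back to $F(A\otimes\mathcal{W})^{\gamma}$), I would assemble a single sequence $(u_n)_n$ in $((A\otimes\mathcal{W})^{\sim})^{\omega}$ whose class $u:=[(u_n)_n]$ satisfies $u^*u=uu^*=1$ and $\gamma_g(u)=u$ in $F(A\otimes\mathcal{W})^{\gamma}$, so that $u$ is a genuine unitary in the fixed-point algebra $F(A\otimes\mathcal{W})^{\gamma}$, together with $u\varphi(f)u^*=\psi(f)$ for all $f$ in the dense set, hence for all $f\in C(\Omega)$ by continuity. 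Taking $f=z$ yields $uN_1u^*=N_2$, completing the proof.

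The main obstacle is bookkeeping rather than a new idea: Theorem \ref{thm:unitary-equivalence-ed} only delivers an \emph{approximate} unitary that is approximately $\gamma$-invariant and intertwines only \emph{after multiplying by $a\otimes b$}, so the delicate point is the reindexing argument that promotes these one-sided, $(a\otimes b)$-localized approximate relations in $M(A\otimes\mathcal{W})^{\omega}$ to exact relations modulo $\mathrm{Ann}(A\otimes\mathcal{W},\cdot)$, i.e.\ genuine equalities in $F(A\otimes\mathcal{W})^{\gamma}$. Concretely one must check that the estimates $\|(a\otimes b)(u_n^*u_n-1)\|\to 0$, $\|(a\otimes b)(\gamma_g(u_n)-u_n)\|\to 0$ and the intertwining bound, holding for all $a\in F_2^{(m)}$, $b\in F_3^{(m)}$ as $m\to\infty$, together force $u$ to be unitary and $\gamma$-fixed in $F(A\otimes\mathcal{W})^{\gamma}$; this is exactly the mechanism by which $\mathrm{Ann}$ is quotiented out, and it mirrors the conclusion of the proof of Theorem \ref{thm:unitary-equivalence-ed} itself, so I expect it to go through by the same reindexing lemmas used there.
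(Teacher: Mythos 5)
Your proposal follows the paper's own proof in all essentials: the only-if direction by traciality, and for the if direction the reduction to unital homomorphisms $\varphi(f)=f(N_1)$, $\psi(f)=f(N_2)$, Choi--Effros lifts $\varphi_n,\psi_n$, repeated application of Theorem \ref{thm:unitary-equivalence-ed} with increasing precision (with the lower bound $\nu$ supplied exactly as you say by the hypothesis $\tau_{\omega}(f(N_1))>0$), and reassembly of the resulting contractions over the ultrafilter.

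There is, however, one concrete gap in your closing step. Before you may speak of $u=[(u_n)_n]$ as an element of $F(A\otimes\mathcal{W})^{\gamma}$ at all, the sequence $(u_n)_n$ must be approximately central, i.e.\ $\lim_{n\to\omega}\|[u_n,x]\|=0$ for every $x\in A\otimes\mathcal{W}$; this is what membership in $(A\otimes\mathcal{W})^{\sim}_{\omega}$, prior to quotienting by the annihilator, means. None of the estimates in your checklist --- $\|(a\otimes b)(u_n^*u_n-1)\|\to 0$, $\|(a\otimes b)(u_nu_n^*-1)\|\to 0$, $\|(a\otimes b)(\gamma_g(u_n)-u_n)\|\to 0$, and the intertwining bound --- says anything about commutators $[u_n,a\otimes b]$, so as written your claim that they ``force $u$ to be unitary and $\gamma$-fixed in $F(A\otimes\mathcal{W})^{\gamma}$'' skips precisely the point that needs proof. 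The paper closes this by fixing $F_1=\{1,\iota\}$ and exploiting unitality of the lifts: the intertwining estimate at the constant function $1$ reads $\|u_{k,n}(a\otimes b)u_{k,n}^*-(a\otimes b)\|<1/k$, and combined with $\|(a\otimes b)(u_{k,n}^*u_{k,n}-1)\|<1/k$ and the identity $[u,x]=ux(1-u^*u)+(uxu^*-x)u$ this yields $\|[u_{k,n},a\otimes b]\|<2/k$. Your dense filtration $F_1^{(m)}$ contains functions arbitrarily close to $1$, so the same computation is available to you, but you never invoke it; note also that the dense filtration is otherwise unnecessary, since intertwining the single function $\iota$ gives $uN_1u^*=N_2$ and functional calculus handles every other $f$. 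A second, smaller imprecision: over a free ultrafilter, ``choosing $n$ large enough at each stage'' is not the right bookkeeping; one chooses a decreasing chain $X_1\supset X_2\supset\cdots$ of sets in $\omega$ on which the level-$k$ hypotheses hold and defines $u_n:=u_{k,n}$ for $n\in X_k\setminus X_{k+1}$ (and $u_n:=1$ off $X_1$), so that each level-$k$ estimate holds on the $\omega$-large set $X_k$.
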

\begin{proof}
By a similar argument as in the proof of \cite[Theorem 5.3]{Na3}, 
we can prove this theorem. We shall give a proof for reader's convenience. 
 
Since the only if part is clear, we will show the if part. 
Define unital homomorphisms $\varphi$ and $\psi$ from $C(\mathrm{Sp}(N_1))$ to 
$F(A\otimes\mathcal{W})^{\gamma}$ by $\varphi (f):= f(N_1)$ and $\psi (f):=f(N_2)$, respectively.  
By the Choi-Effros lifting theorem, we see that there exist sequences of unital c.p. maps 
$\varphi_n$ and $\psi_n$ from $C(\mathrm{Sp}(N_1))$ to $(A\otimes\mathcal{W})^{\sim}$ such that 
$f(N_1)=[(\varphi_n(f))_n]$ and $f(N_2)=[(\psi_n(f))_n]$ for any $f\in C(\mathrm{Sp}(N_1))$. 
Then we have 
$$
|\tau (\varphi_n(f_1))-\tau_{\omega}(f_1(N_1))| \to 0, \; \| [\varphi_n(f_1),x]\| \to 0, \; 
\| [\psi_n(f_1),x]\| \to 0, 
$$
$$
\| (\varphi_n(f_1f_2)- \varphi_n(f_1)\varphi_n(f_2))x\| \to 0,\;   
\| (\psi_n(f_1f_2)- \psi_n (f_1)\psi_n(f_2))x\| \to 0, 
$$
$$
\| (\gamma_g (\varphi_n (f_1))-\varphi_n (f_1))x \|\to 0,  \; 
\| (\gamma_g (\psi_n (f_1))-\psi_n (f_1))x \|\to 0, 
$$
$$
\; |\tau (\varphi_n(f_1))-\tau (\psi_n(f_1))| \to 0
$$
as $n\to\omega$ for any $f_1,f_2\in C(\mathrm{Sp}(N_1))$, $x\in A\otimes\mathcal{W}$ and $g\in G$. 

We denote by $\iota$ the identity function on $\mathrm{Sp}(N_1)$, that is, $\iota (z)=z$ for any 
$z\in\mathrm{Sp}(N_1)$. 
Let $F_1:=\{1, \iota \}\subset C(\mathrm{Sp}(N_1))$, and let 
$\{F_{2,k}\}_{k\in\mathbb{N}}$ and $\{F_{3,k}\}_{k\in\mathbb{N}}$ be increasing sequences 
of finite subsets in $A$ and $\mathcal{W}$ such that $A=\overline{\bigcup_{k\in\mathbb{N}} F_{2,k}}$ 
and $\mathcal{W}=\overline{\bigcup_{k\in\mathbb{N}} F_{3,k}}$, respectively. 
For any $k\in\mathbb{N}$, we obtain mutually orthogonal positive elements 
$h_{1,k},h_{2,k},...,h_{l(k),k}$ in $C(\mathrm{Sp}(N_1))$ of norm one by applying 
Theorem \ref{thm:unitary-equivalence-ed} to $F_1$, $F_{2,k}$, $F_{3,k}$ and $1/k$. 
Put 
$$
\nu_k := \frac{1}{2} \min\{\tau_{\omega} (h_{1,k}(N_1)),\tau_{\omega} (h_{2,k}(N_1)),...,
\tau_{\omega} (h_{l(k),k}(N_1)) \} >0. 
$$ 
Applying Theorem \ref{thm:unitary-equivalence-ed} to $\nu_k$, 
we obtain finite subsets $\mathcal{G}_{1,k}\subset C(\mathrm{Sp}(N_1))$, $\mathcal{G}_{2,k}
\subset A\otimes\mathcal{W}$ and $\delta_k>0$. We may assume that 
$\{\mathcal{G}_{1,k}\}_{k\in\mathbb{N}}$ and $\{\mathcal{G}_{2,k}\}_{k\in\mathbb{N}}$
are increasing sequences and $\delta_k>\delta_{k+1}$ for any $k\in\mathbb{N}$. 
We can find a sequence $\{X_k\}_{k=1}^\infty$ of elements in $\omega$ such that 
$X_k\subset X_{k+1}$ and for any $n\in X_{k}$, 
$$
|\tau (\varphi_n(h_{i,k})) - \tau_{\omega}(h_{i,k}(N_1)) | < \nu_k , \;
\| [\varphi_n(f_1),x]\| < \delta_k, \; \| [\psi_n(f_1),x]\| < \delta_k, 
$$ 
$$
\| (\varphi_n (f_1f_2)- \varphi_n (f_1)\varphi_n (f_2))x\| < \delta_k, \; 
\| (\psi_n (f_1f_2)- \psi_n (f_1)\psi_n (f_2))x\| < \delta_k, 
$$
$$
\| (\gamma_g (\varphi_n (f_1))-\varphi_n (f_1))x \| < \delta_k, \; 
\| (\gamma_g (\psi_n (f_1))-\psi_n (f_1))x \| < \delta_k, 
$$
$$
| \tau (\varphi_n (f_1)) -\tau (\psi_n (f_1)) | < \delta_k
$$
for any $i\in\{1,2,...,l(k)\}$, $f_1,f_2\in\mathcal{G}_{1,k}$, $x\in\mathcal{G}_{2,k}$ and $g\in G$. 
Since we have 
$$
\tau (\varphi_n(h_{i,k})) > \tau_{\omega}(h_{i,k}(N_1))-\nu_k  \geq 2\nu_k - \nu_k = \nu_k 
$$ 
for any $i\in\{1,2,...,l(k)\}$,  
Theorem \ref{thm:unitary-equivalence-ed} implies that for any $n\in X_{k}$, there exists a 
contraction $u_{k,n}$ in 
$(A\otimes\mathcal{W})^{\sim}$ such that 
$$
\| (a\otimes b) (u_{k,n}^*u_{k,n}-1)\| < \frac{1}{k}, \; \|  (a\otimes b)(u_{k,n}u_{k,n}^*-1)\| < \frac{1}{k}, 
$$
$$ 
\|(a\otimes b)(\gamma_g(u_{k,n})-u_{k,n})\| <\frac{1}{k}, \; 
\| u_{k,n}\varphi_n (f)(a\otimes b)u_{k,n}^* - \psi_n(f)(a\otimes b) \| <\frac{1}{k}
$$
for any $f\in F_1$, $a\in F_{2,k}$, $b\in F_{3,k}$ and $g\in G$. Since $F_1=\{1, \iota \}$, we have 
$$
\| [u_{k,n}, a\otimes b]\| \leq \| u_{k,n} (a\otimes b)(1-u_{k,n}^*u_{k,n})\| 
+ \| (u_{k,n} (a\otimes b)u_{k,n}^*- a\otimes b)u_{k,n} \|< \frac{2}{k} 
$$
and 
$$
\| u_{k,n}\varphi_n (\iota) (a\otimes b)u_{k,n}^*- \psi_n(\iota)(a\otimes b)\| < \frac{1}{k}
$$
for any $n\in X_{k}$, $a\in F_{2,k}$ and $b\in F_{3,k}$. 
Put 
$$
u_{n} := \left\{\begin{array}{cl}
1 & \text{if } n\notin X_1   \\
u_{k,n} & \text{if } n\in X_k\setminus X_{k+1}\quad (k\in\mathbb{N})
\end{array}
\right..
$$
Then 
$$
\| (a\otimes b)(u_n^*u_n-1)\|\to 0, \; 
\| (a\otimes b)(u_nu_n^*-1)\|\to 0, \; \| (a\otimes b)(\gamma_g (u_n)-u_n)\|\to 0,
$$
$$
\| [u_n, a\otimes b]\|\to 0,\; \|(u_n\varphi_n(\iota)u_n^*-\psi_n(\iota))
(a\otimes b)\|\to 0
$$
as $n\to \omega$ for any $a\in A$, $b\in\mathcal{W}$ and $g\in G$. Therefore $[(u_n)_n]$ is 
a unitary element in $F(A\otimes\mathcal{W})^{\gamma}$ and $[(u_n)_u]N_1[(u_n)_n]^*=N_2$. 
\end{proof}

Applying the theorem above to projections, we obtain the following corollary. 
Note that if $p$ is a projection, then $C(\mathrm{Sp}(p))$ can be identified with 
$\{\lambda_1 p+ \lambda_2(1-p)\; |\; \lambda_1,\lambda_2\in\mathbb{C}\}$. 
Hence it is clear that $\tau_{\omega}(f(p))>0$ for any $f\in C(\mathrm{Sp}(p))_{+}\setminus \{0\}$ 
if and only if $0<\tau_{\omega}(p)<1$. Also, for projections $p$ and $q$, 
we have  $\tau_{\omega}(f(p))=\tau_{\omega}(f(q))$ for any $f\in C(\mathrm{Sp}(p))$ 
if and only if $\tau_{\omega}(p)=\tau_{\omega}(q)$.

\begin{cor}\label{thm:comparison}
Let $p$ and $q$ be projections in $F(A\otimes \mathcal{W})^{\gamma}$ such that 
$0< \tau_{\omega} (p) <1$.  Then $p$ and $q$ are unitarily equivalent if and only if 
$
\tau_{\omega} (p)= \tau_{\omega} (q)
$. 
\end{cor}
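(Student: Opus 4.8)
The plan is to obtain this corollary as a direct specialization of Theorem \ref{thm:classification-normal} to the case where the normal elements are the projections $N_1=p$ and $N_2=q$, invoking the two equivalences recorded in the remark immediately preceding the statement. The \emph{only if} direction is immediate and requires none of that machinery: if $u$ is a unitary in $F(A\otimes\mathcal{W})^{\gamma}$ with $upu^*=q$, then the trace property of $\tau_{\omega}$ gives $\tau_{\omega}(q)=\tau_{\omega}(upu^*)=\tau_{\omega}(p)$.

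For the \emph{if} direction, I would assume $\tau_{\omega}(p)=\tau_{\omega}(q)$ and first arrange that the two projections have the same spectrum, which is the one point deserving a moment's care. Since $0<\tau_{\omega}(p)<1$, the projection $p$ is neither $0$ nor $1$, so $\mathrm{Sp}(p)=\{0,1\}$; the equality $\tau_{\omega}(q)=\tau_{\omega}(p)\in(0,1)$ then forces $q$ to be nontrivial as well, whence $\mathrm{Sp}(q)=\{0,1\}=\mathrm{Sp}(p)$. This verifies the spectral hypothesis of Theorem \ref{thm:classification-normal}.

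It then remains to translate the two analytic hypotheses of Theorem \ref{thm:classification-normal} through the remark. The nondegeneracy condition $\tau_{\omega}(f(p))>0$ for all $f\in C(\mathrm{Sp}(p))_{+}\setminus\{0\}$ is, by the remark, equivalent to $0<\tau_{\omega}(p)<1$, which holds by hypothesis; and the matching condition $\tau_{\omega}(f(p))=\tau_{\omega}(f(q))$ for all $f\in C(\mathrm{Sp}(p))$ is, again by the remark, equivalent to $\tau_{\omega}(p)=\tau_{\omega}(q)$, which we have assumed. With all hypotheses in place, Theorem \ref{thm:classification-normal} produces a unitary $u\in F(A\otimes\mathcal{W})^{\gamma}$ with $upu^*=q$, finishing the argument. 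I do not expect any genuine obstacle here: the entire content has already been front-loaded into Theorem \ref{thm:classification-normal} and the remark, and the only step requiring explicit attention is the observation that $0<\tau_{\omega}(p)=\tau_{\omega}(q)<1$ forces both $p$ and $q$ to be nontrivial, so that their spectra coincide with $\{0,1\}$.
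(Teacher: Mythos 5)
Your proposal is correct and follows essentially the same route as the paper: the paper also obtains the corollary by specializing Theorem \ref{thm:classification-normal} to $N_1=p$, $N_2=q$, with the two trace equivalences supplied by the remark immediately preceding the statement. Your explicit check that $0<\tau_{\omega}(p)=\tau_{\omega}(q)<1$ forces both projections to be nontrivial, hence $\mathrm{Sp}(p)=\mathrm{Sp}(q)=\{0,1\}$, is exactly the point the paper folds implicitly into that remark.
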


The following corollary is important in the next section. 

\begin{cor}\label{cor:comparison}
Let $p$ and $q$ be projections in $F(A\otimes \mathcal{W})^{\gamma}$ such that 
$0< \tau_{\omega} (p) \leq 1$.  Then $p$ and $q$ are Murray-von Neumann equivalent if and only if 
$
\tau_{\omega} (p)= \tau_{\omega} (q)
$. 
\end{cor}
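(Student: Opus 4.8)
The plan is to deduce this from Corollary \ref{thm:comparison}, whose only restriction relative to what we need is the hypothesis $\tau_{\omega}(p)<1$. Thus the single case requiring a new idea is $\tau_{\omega}(p)=\tau_{\omega}(q)=1$, which I would handle by a halving trick that replaces $p$ and $q$ by orthogonal pairs of subprojections of strictly smaller trace.

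The ``only if'' direction is immediate from traciality: if $v\in F(A\otimes\mathcal{W})^{\gamma}$ satisfies $v^{*}v=p$ and $vv^{*}=q$, then, since $\tau_{\omega}$ is a tracial state on $F(A\otimes\mathcal{W})^{\gamma}$ (Proposition \ref{thm:Matui-Sato}(i)), we get $\tau_{\omega}(p)=\tau_{\omega}(v^{*}v)=\tau_{\omega}(vv^{*})=\tau_{\omega}(q)$.

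For the converse, assume $\tau_{\omega}(p)=\tau_{\omega}(q)$. Since $p$ is a projection, $d_{\tau_{\omega}}(p)=\tau_{\omega}(p)>0$, so I would apply Proposition \ref{pro:key-pro}(iii) with $h=p$ and $\theta=\tau_{\omega}(p)/2\in[0,\tau_{\omega}(p))$ to obtain a non-zero projection $p_{1}$ in the corner $pF(A\otimes\mathcal{W})^{\gamma}p$ with $\tau_{\omega}(p_{1})=\tau_{\omega}(p)/2$; then $p_{1}\leq p$, and $p_{2}:=p-p_{1}$ is a projection orthogonal to $p_{1}$ with $p=p_{1}+p_{2}$ and $\tau_{\omega}(p_{2})=\tau_{\omega}(p)/2$. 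Doing the same with $q$ yields orthogonal projections $q_{1},q_{2}$ with $q=q_{1}+q_{2}$ and $\tau_{\omega}(q_{1})=\tau_{\omega}(q_{2})=\tau_{\omega}(q)/2=\tau_{\omega}(p)/2$. The purpose of halving is that now $0<\tau_{\omega}(p_{i})=\tau_{\omega}(q_{i})=\tau_{\omega}(p)/2\leq 1/2<1$ for $i=1,2$, \emph{irrespective} of whether $\tau_{\omega}(p)=1$; hence Corollary \ref{thm:comparison} applies to each pair and gives that $p_{i}$ is unitarily equivalent, and in particular Murray-von Neumann equivalent, to $q_{i}$. I then choose partial isometries $v_{1},v_{2}\in F(A\otimes\mathcal{W})^{\gamma}$ with $v_{i}^{*}v_{i}=p_{i}$ and $v_{i}v_{i}^{*}=q_{i}$.

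Finally I would set $v:=v_{1}+v_{2}$. Using the standard relations $v_{i}=q_{i}v_{i}=v_{i}p_{i}$ together with the orthogonality $p_{1}p_{2}=0$ and $q_{1}q_{2}=0$, a short computation shows that all four cross terms vanish, so $v^{*}v=p_{1}+p_{2}=p$ and $vv^{*}=q_{1}+q_{2}=q$, which is the desired Murray-von Neumann equivalence. I do not expect a genuine obstacle: the only delicate point is the endpoint $\tau_{\omega}(p)=1$, where Corollary \ref{thm:comparison} is inapplicable directly, and this is precisely what the halving step is designed to absorb by passing to subprojections of trace at most $1/2$.
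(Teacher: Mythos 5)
Your proposal is correct and takes essentially the same approach as the paper: the paper likewise handles the endpoint case by using Proposition \ref{pro:key-pro} to split off a subprojection of trace $1/2$, applies Corollary \ref{thm:comparison} to the resulting pieces of trace strictly less than $1$, and adds the partial isometries. The only cosmetic difference is that the paper halves only a trace-one projection and compares it to the unit $1$ (concluding by transitivity), whereas you halve both $p$ and $q$ symmetrically and compare the halves directly.
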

\begin{proof}
By Corollary \ref{thm:comparison}, it suffices to show that if $p$ is a projection in 
$F(A\otimes \mathcal{W})^{\gamma}$ such that $\tau_{\omega}(p)=1$, then $p$ is 
Murray-von Neumann equivalent to $1$. 
Proposition \ref{pro:key-pro} implies that there exists a projection $r$ in 
$F(A\otimes\mathcal{W})^{\gamma}$ such that $r\leq p$ and $\tau_{\omega}(r)=1/2$. 
By Corollary \ref{thm:comparison}, $p-r$ is unitarily equivalent to $1-r$. Therefore 
$p=(p-r)+r$ is Murray-von Neumann equivalent to $(1-r)+r=1$. 
\end{proof}

\section{Rohlin type theorem}\label{sec:main}
In this section we shall show that $\gamma$ has the Rohlin property.

For a $\gamma$-cocycle $w$ in $F(A\otimes\mathcal{W})$, 
define an action $\gamma^{w}$ on $F(A\otimes\mathcal{W})\otimes M_{2}(\mathbb{C})$ by 
$$
\gamma^{w}_g := \mathrm{Ad}\left(\left(\begin{array}{cc}
               1   &   0    \\ 
               0   &   w(g)    
 \end{array} \right)\right) \circ (\gamma_g\otimes \mathrm{id})
$$
for any $g\in G$. 
Since $\gamma$ has the weak Rohlin property, we obtain the following lemma by 
similar arguments as in  \cite[Proposition 4.8]{MS2} and \cite[Proposition 3.3]{MS3} 
(see also arguments in Section \ref{sec:target}). We leave the proof to the reader. 

\begin{lem}\label{lem:fixed-strict-comparison}
Let $a$ and $b$ be positive elements in 
$(F(A\otimes\mathcal{W})\otimes M_{2}(\mathbb{C}))^{\gamma^{w}}$ such that 
$d_{\tau_{\omega}\otimes\mathrm{Tr}_2}(a) <d_{\tau_{\omega}\otimes\mathrm{Tr}_2}(b)$ where 
$\mathrm{Tr}_2$ is the (unnormalized) usual trace on $M_2(\mathbb{C})$. 
Then there exists an element $r$ in 
$(F(A\otimes\mathcal{W})\otimes M_{2}(\mathbb{C}))^{\gamma^{w}}$ such that $r^*br=a$. 
\end{lem}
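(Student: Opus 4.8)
The plan is to reproduce the proof of Proposition \ref{pro:main-section3}(ii) with the twisted fixed point algebra $(F(A\otimes\mathcal{W})\otimes M_2(\mathbb{C}))^{\gamma^{w}}$ in place of $\mathcal{B}^{\gamma}$. Thus the two ingredients I must supply are: (a) a trace-preserving surjection from this fixed point algebra onto a factor of type II$_1$, and (b) an analogue of the property-(SI) statement of Proposition \ref{pro:target-si}. Granting these, the element $r$ is built exactly as in Proposition \ref{pro:main-section3}(ii).

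For (a): let $N:=\pi_{\tau}(A\otimes\mathcal{W})''\cong\mathcal{R}_0$ and let $N_{\omega}:=N^{\omega}\cap N'$ be the associated von Neumann central sequence algebra, which is a factor of type II$_1$. As with the surjectivity results of Section \ref{sec:target} (via \cite[Theorem 3.3]{KR}), there is a trace-preserving surjection $F(A\otimes\mathcal{W})\twoheadrightarrow N_{\omega}$ intertwining $\gamma$ with the induced action $\tilde{\gamma}$. The cocycle $w$ maps to a $\tilde{\gamma}$-cocycle $\tilde{w}$ of unitaries in $N_{\omega}$, and $\gamma^{w}$ to the twisted action $\tilde{\gamma}^{w}$ on $N_{\omega}\otimes M_2(\mathbb{C})$. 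Since $\mathrm{Ad}(\mathrm{diag}(1,\tilde{w}(g)))$ is inner and tensoring with $M_2(\mathbb{C})$ preserves outerness, $\tilde{\gamma}^{w}_g$ is inner if and only if $\tilde{\gamma}_g$ is; as $\tilde{\gamma}$ is outer on $N_{\omega}$ by the argument of Lemma \ref{lem:central-trivial} (i.e. \cite[Theorem XIV.4.16]{Tak}), $\tilde{\gamma}^{w}$ is an outer action of $G$ on the II$_1$ factor $N_{\omega}\otimes M_2(\mathbb{C})\cong N_{\omega}$. Hence, exactly as in Proposition \ref{pro:fixed-factor}, $(N_{\omega}\otimes M_2(\mathbb{C}))^{\tilde{\gamma}^{w}}$ is a factor of type II$_1$, and averaging over $G$ produces a trace-preserving surjection $(F(A\otimes\mathcal{W})\otimes M_2(\mathbb{C}))^{\gamma^{w}}\twoheadrightarrow (N_{\omega}\otimes M_2(\mathbb{C}))^{\tilde{\gamma}^{w}}$.

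For (b): I would show that for positive contractions $x,y$ in $(F(A\otimes\mathcal{W})\otimes M_2(\mathbb{C}))^{\gamma^{w}}$ with $(\tau_{\omega}\otimes\mathrm{Tr}_2)(x)=0$ and $\inf_m(\tau_{\omega}\otimes\mathrm{Tr}_2)(y^m)>0$, there is $s$ in the fixed point algebra with $s^*s=x$ and $ys=s$, following Proposition \ref{pro:target-si}. The new point is the weak-Rohlin tower for $\gamma^{w}$. Writing $\gamma^{w}_g(\mathrm{diag}(a,d))=\mathrm{diag}(\gamma_g(a),\beta_g(d))$ with $\beta_g:=\mathrm{Ad}(w(g))\circ\gamma_g$ (a genuine $G$-action, by the cocycle identity), I would take $e:=f\otimes e_{11}+f'\otimes e_{22}$, where $e_{ij}$ are the matrix units of $M_2(\mathbb{C})$, $f$ is a $\gamma$-weak-Rohlin element from Theorem \ref{thm:weak-rohlin}, and $f'$ is a weak-Rohlin element for $\beta$. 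Then $\gamma^{w}_g(e)\gamma^{w}_h(e)=0$ for $g\neq h$ and $(\tau_{\omega}\otimes\mathrm{Tr}_2)(e)=2/|G|$, so the $G$-translates of $e$ fill the unit. Disjointifying $y$ against $e$, applying the property (SI) of $A\otimes\mathcal{W}$ at the level of representatives, and symmetrizing by $\frac{1}{|G|}\sum_{g\in G}\gamma^{w}_g(\,\cdot\,)$ then yields the desired $s$, just as in Proposition \ref{pro:target-si}.

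With (a) and (b) in hand, the proof concludes as in Proposition \ref{pro:main-section3}(ii): passing the support projections of $a$ and $b$ to the II$_1$ factor, where the hypothesis $d_{\tau_{\omega}\otimes\mathrm{Tr}_2}(a)<d_{\tau_{\omega}\otimes\mathrm{Tr}_2}(b)$ gives a subequivalence, one constructs $r_1$ with $r_1^*br_1\le a$ and $\varrho(r_1^*br_1)=\varrho(a)$, so that $a-r_1^*br_1$ has trace zero, then absorbs this remainder via (b) to obtain $r_2$, and sets $r=r_1+r_2$. I expect the main obstacle to be part (b), specifically producing the weak-Rohlin element $f'$ for the cocycle-perturbed action $\beta$ on the central sequence algebra $F(A\otimes\mathcal{W})$: the action $\beta$ is only exterior equivalent to the strongly outer $\gamma$ rather than literally of the form covered by Theorem \ref{thm:weak-rohlin}, and one must carry the off-diagonal cocycle terms through the disjointification and averaging steps.
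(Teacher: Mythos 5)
Your proposal takes essentially the same route the paper intends: the paper does not write out a proof of Lemma \ref{lem:fixed-strict-comparison}, but leaves it to the reader with references to \cite[Proposition 4.8]{MS2}, \cite[Proposition 3.3]{MS3} and the arguments of Section \ref{sec:target}, and your ingredients (a) and (b) are precisely those arguments transplanted to $(F(A\otimes\mathcal{W})\otimes M_{2}(\mathbb{C}))^{\gamma^{w}}$. Your part (a) --- outerness of the twisted action on the von Neumann algebraic central sequence factor, factoriality of its fixed point algebra, and averaging over $G$ to get surjectivity between fixed point algebras --- is correct as stated.

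The obstacle you flag in part (b) is not a genuine one, and it is closed by the same reindexing device the paper itself uses twice in the proof of Lemma \ref{lem:cohomology} (``taking a suitable subsequence of a representative, we may assume\dots''). The weak Rohlin element $f$ for $\gamma$, realized as $(f_n\otimes k_n)_n$ as in the proof of Proposition \ref{pro:target-si}, can, after passing to a suitable subsequence of its representative, be assumed in addition to commute in $F(A\otimes\mathcal{W})$ with the finitely many unitaries $\{\gamma_k(w(h))\;|\;k,h\in G\}$, while retaining $\gamma_g(f)\gamma_h(f)=0$ for $g\neq h$ and $\tau_{\omega}(f)=1/|G|$; this is a diagonal argument over countably many $\omega$-limit conditions, exactly as for the projection $p_{\varepsilon}$ in Lemma \ref{lem:cohomology}. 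With this choice, $f'=f$ already serves as a weak Rohlin element for $\beta$, since
$$
\beta_g(f)\beta_h(f)=w(g)\gamma_g(f)w(g)^{*}w(h)\gamma_h(f)w(h)^{*}
=w(h)\gamma_g(f)\gamma_h(f)w(h)^{*}=0 ,
$$
or, equivalently, $e=f\otimes 1_{M_2}$ is itself a weak Rohlin element for $\gamma^{w}$. In particular the off-diagonal cocycle terms you were worried about never enter: once the Rohlin element commutes with the cocycle, the disjointification of $y$ and the averaging $\frac{1}{|G|}\sum_{g\in G}\gamma^{w}_g(\cdot)$ go through verbatim as in Proposition \ref{pro:target-si}, and the rest of your outline completes the proof.
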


The proof of the following lemma is based on Connes' $2\times 2$ 
matrix trick in \cite[Corollary 2.6]{C3}.

\begin{lem}\label{lem:cohomology}
Every $\gamma$-cocycle $w$ in $F(A\otimes\mathcal{W})$ is a coboundary. 
\end{lem}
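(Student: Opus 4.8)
The plan is to run Connes' $2\times 2$ matrix trick inside the fixed point algebra of the twisted action $\gamma^{w}$ on $F(A\otimes\mathcal{W})\otimes M_2(\mathbb{C})$. Consider the two diagonal matrix units $e_{11}=1\otimes\bigl(\begin{smallmatrix}1&0\\0&0\end{smallmatrix}\bigr)$ and $e_{22}=1\otimes\bigl(\begin{smallmatrix}0&0\\0&1\end{smallmatrix}\bigr)$. A direct computation shows $\gamma^{w}_g(e_{11})=e_{11}$ and $\gamma^{w}_g(e_{22})=e_{22}$ for every $g\in G$, since conjugation by $\mathrm{diag}(1,w(g))$ fixes both diagonal corners and $\gamma_g(1)=1$; hence $e_{11}$ and $e_{22}$ are projections in $(F(A\otimes\mathcal{W})\otimes M_2(\mathbb{C}))^{\gamma^{w}}$, and clearly $d_{\tau_{\omega}\otimes\mathrm{Tr}_2}(e_{11})=d_{\tau_{\omega}\otimes\mathrm{Tr}_2}(e_{22})=1$.

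The key step is to produce a partial isometry $V$ in $(F(A\otimes\mathcal{W})\otimes M_2(\mathbb{C}))^{\gamma^{w}}$ with $V^*V=e_{22}$ and $VV^*=e_{11}$; that is, to show that $e_{11}$ and $e_{22}$ are Murray-von Neumann equivalent in the twisted fixed point algebra. Since these projections share the same, and sub-maximal, value of the dimension function, this is exactly the type of equal-trace comparison established for $F(A\otimes\mathcal{W})^{\gamma}$ in Corollary \ref{cor:comparison}. I would obtain it here along the same route: Lemma \ref{lem:fixed-strict-comparison} supplies strict comparison in $(F(A\otimes\mathcal{W})\otimes M_2(\mathbb{C}))^{\gamma^{w}}$, and the arguments of Sections \ref{sec:target}--\ref{sec:normal} (the classification of normal elements up to unitary equivalence, together with the existence of projections of prescribed trace from Proposition \ref{pro:key-pro}) carry over to the twisted action, upgrading strict comparison to the equal-trace case.

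Granting such a $V$, write $V=\bigl(\begin{smallmatrix}0&v\\0&0\end{smallmatrix}\bigr)$ with $v\in F(A\otimes\mathcal{W})$; then $V^*V=e_{22}$ and $VV^*=e_{11}$ force $v^*v=vv^*=1$, so $v$ is unitary. The invariance $\gamma^{w}_g(V)=V$ unwinds to
\[
\gamma^{w}_g(V)=\begin{pmatrix}1&0\\0&w(g)\end{pmatrix}\begin{pmatrix}0&\gamma_g(v)\\0&0\end{pmatrix}\begin{pmatrix}1&0\\0&w(g)^*\end{pmatrix}=\begin{pmatrix}0&\gamma_g(v)w(g)^*\\0&0\end{pmatrix},
\]
so that $\gamma_g(v)w(g)^*=v$, equivalently $w(g)=v^*\gamma_g(v)$. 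Setting $\tilde v:=v^*$ gives $w(g)=\tilde v\,\gamma_g(\tilde v^*)$ for all $g\in G$, which is precisely the assertion that $w$ is a coboundary.

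I expect the main obstacle to be the equal-trace comparison of the second paragraph: strict comparison (Lemma \ref{lem:fixed-strict-comparison}) by itself yields subequivalence only under a \emph{strict} inequality of dimension functions, and closing the gap when $d_{\tau_{\omega}\otimes\mathrm{Tr}_2}(e_{11})=d_{\tau_{\omega}\otimes\mathrm{Tr}_2}(e_{22})$ requires the full strength of the normal-element classification (Theorem \ref{thm:classification-normal}) transported to the twisted setting. Checking that the machinery of Sections \ref{sec:target}--\ref{sec:normal} applies verbatim to $\gamma^{w}$ is the technical heart of the argument; once it does, the matrix computation above makes the conclusion immediate.
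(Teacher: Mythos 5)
Your overall skeleton (the twisted action $\gamma^{w}$, a $\gamma^{w}$-invariant partial isometry between the two diagonal corners, and the final unwinding $\gamma_g(v)w(g)^*=v$) matches the paper's strategy, and the last computation is correct. But the key step — producing $V$ with $V^*V=e_{22}$, $VV^*=e_{11}$ in $(F(A\otimes\mathcal{W})\otimes M_2(\mathbb{C}))^{\gamma^{w}}$ — rests on the claim that the machinery of Sections \ref{sec:target}--\ref{sec:normal} "carries over to the twisted action," and this is a genuine gap, not a routine verification. The entire apparatus of those sections (the target algebra $\mathcal{B}^{\gamma}$ built from the GNS representation of $A\otimes\mathcal{W}$, the homomorphism $\rho$, the stable uniqueness theorem, and Theorem \ref{thm:unitary-equivalence-ed}, whose hypotheses are phrased in terms of $\gamma_g$ acting on completely positive maps into $M(A\otimes\mathcal{W})$) is constructed from the C$^*$-dynamical system $(A\otimes\mathcal{W},\gamma)$. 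The twisted action $\gamma^{w}$ is \emph{not} induced by any action on $A\otimes\mathcal{W}\otimes M_2(\mathbb{C})$: its cocycle $w(g)$ lives in the central sequence algebra $F(A\otimes\mathcal{W})$ itself, so there is no underlying separable dynamical system from which to rebuild $\mathcal{B}$, $\rho$, or the stable uniqueness theorem. (Indeed, if $\gamma^{w}$ were induced by a genuine action conjugate to $\gamma\otimes\mathrm{id}$, one would essentially already know $w$ is a coboundary — the thing being proved.) So "applies verbatim" fails, and nothing in the paper supplies an equal-trace comparison theorem for $(F(A\otimes\mathcal{W})\otimes M_2(\mathbb{C}))^{\gamma^{w}}$.

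The paper's proof is engineered precisely to avoid this. It uses Lemma \ref{lem:fixed-strict-comparison} in the twisted algebra only where a \emph{strict} trace inequality is available: by Proposition \ref{pro:key-pro} one takes $p_{\varepsilon}\in F(A\otimes\mathcal{W})^{\gamma}$ with $\tau_{\omega}(p_{\varepsilon})=1-\varepsilon<1$, arranges (by passing to a subsequence of a representative) that $p_{\varepsilon}$ commutes with each $w(g)$, so that
$\bigl(\begin{smallmatrix}0&0\\0&p_{\varepsilon}\end{smallmatrix}\bigr)$
is $\gamma^{w}$-invariant, and obtains $R_{\varepsilon}$ with $R_{\varepsilon}^*e_{11}R_{\varepsilon}=\bigl(\begin{smallmatrix}0&0\\0&p_{\varepsilon}\end{smallmatrix}\bigr)$. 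A diagonal argument over $\varepsilon\to 0$ yields a trace-one projection $p$ and an invariant $R$ with $R^*e_{11}R=\bigl(\begin{smallmatrix}0&0\\0&p\end{smallmatrix}\bigr)$. All remaining \emph{equal-trace} comparisons ($1\sim p$ and $1\sim q$) are then carried out in the \emph{untwisted} algebra $F(A\otimes\mathcal{W})^{\gamma}$, where Corollary \ref{cor:comparison} is actually proved, with commutation against $w(g)$ again arranged by re-choosing representatives; the isometries $s,t$ so obtained are spliced with $R$ to build $V$. To repair your proof you should replace the second paragraph by this two-stage argument (strict comparison in the twisted algebra at trace $1-\varepsilon$, plus diagonalization, plus equal-trace comparison only in $F(A\otimes\mathcal{W})^{\gamma}$), rather than attempt to transport Theorem \ref{thm:classification-normal} to $\gamma^{w}$.
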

\begin{proof}
Let $\varepsilon >0$. By Proposition \ref{pro:key-pro}, there exists a projection $p_{\varepsilon}$ 
in $F(A\otimes\mathcal{W})^{\gamma}$ such that $\tau_{\omega}(p_{\varepsilon})=1-\varepsilon$. 
Taking a suitable subsequence of a representative of $p_{\varepsilon}$, we may assume that 
$w(g)p_{\varepsilon}=p_{\varepsilon}w(g)$ for any $g\in G$. 
Lemma \ref{lem:fixed-strict-comparison} implies that there exists an element $R_{\varepsilon}$ in 
$(F(A\otimes\mathcal{W})\otimes M_{2}(\mathbb{C}))^{\gamma^{w}}$ such that 
$$
R_{\varepsilon}^*\left(\begin{array}{cc}
               1   &   0    \\ 
               0   &   0    
 \end{array} \right)
R_{\varepsilon} = \left(\begin{array}{cc}
               0   &   0    \\ 
               0   &   p_{\varepsilon}    
 \end{array} \right) .
$$
The diagonal argument shows that there exist a projection $p$ in $F(A\otimes\mathcal{W})^{\gamma}$ 
and an element $R$ in $(F(A\otimes\mathcal{W})\otimes M_{2}(\mathbb{C}))^{\gamma^{w}}$ 
such that $\tau_{\omega}(p)=1$ and 
$$
R^*\left(\begin{array}{cc}
               1   &   0    \\ 
               0   &   0    
 \end{array} \right)
R = \left(\begin{array}{cc}
               0   &   0    \\ 
               0   &   p 
 \end{array} \right) .
$$
By Corollary \ref{cor:comparison}, there exists an element $s$ in 
$F(A\otimes\mathcal{W})^{\gamma}$ such that $s^*s=1$ and $ss^*=p$. 
Taking suitable subsequences of representatives of $s$, $p$ and $R$, we may assume that 
$w(g)s=sw(g)$ for any $g\in G$ and 
$$
\left(\begin{array}{cc}
               0   &   0    \\ 
               0   &   s^*    
 \end{array} \right)
R^*\left(\begin{array}{cc}
               1   &   0    \\ 
               0   &   0    
 \end{array} \right)
R 
\left(\begin{array}{cc}
               0   &   0    \\ 
               0   &   s    
 \end{array} \right)
= \left(\begin{array}{cc}
               0   &   0    \\ 
               0   &   1 
 \end{array} \right) .
$$
It it easy to see that there exists a projection $q$ in $F(A\otimes\mathcal{W})^{\gamma}$ such that 
$\tau_{\omega}(q)=1$ and 
$$
\left(\begin{array}{cc}
               q   &   0    \\ 
               0   &   0    
 \end{array} \right)
=
\left(\begin{array}{cc}
               1   &   0    \\ 
               0   &   0    
 \end{array} \right)
R
\left(\begin{array}{cc}
               0   &   0    \\ 
               0   &   p    
 \end{array} \right)
R^*
\left(\begin{array}{cc}
               1   &   0    \\ 
               0   &   0    
 \end{array} \right) . 
$$
By Corollary \ref{cor:comparison}, there exists an element $t$ in 
$F(A\otimes\mathcal{W})^{\gamma}$ such that $t^*t=1$ and $tt^*=q$. 
Put 
$$
V:= \left(\begin{array}{cc}
               0    &   0    \\ 
               0    &   s^*    
 \end{array} \right)
R^* 
\left(\begin{array}{cc}
               t   &   0    \\ 
               0   &   0    
 \end{array} \right) .
$$
Then we have $V\in (F(A\otimes\mathcal{W})\otimes M_{2}(\mathbb{C}))^{\gamma^{w}}$, 
$$
V^*V=\left(\begin{array}{cc}
               1   &   0    \\ 
               0   &   0    
 \end{array} \right)
\quad \text{and} \quad 
VV^*=\left(\begin{array}{cc}
               0   &   0    \\ 
               0   &   1    
 \end{array} \right) .
$$
It is easy to see that there exists a unitary element $v$ in $F(A\otimes\mathcal{W})$ such that 
$$
V= \left(\begin{array}{cc}
               0   &   0    \\ 
               v   &   0    
 \end{array} \right) .
$$
Since $V\in (F(A\otimes\mathcal{W})\otimes M_{2}(\mathbb{C}))^{\gamma^{w}}$, 
$w(g)\gamma_g(v)=v$ for any $g\in G$. Consequently, $w$ is a coboundary. 
\end{proof}

\begin{rem}
The lemma above shows that the first cohomology of $\gamma$ vanishes. 
This property is one of the important properties for the Bratteli-Elliott-Evans-Kishimoto 
intertwining argument (see, for example, \cite{EK} and \cite{Kis1}) in the classification of 
Rohlin actions. 
\end{rem}

The following theorem is the main result in this paper.

\begin{thm}\label{thm:main} 
Let $A$ be a simple separable nuclear 
C$^*$-algebra with a unique tracial state and no unbounded traces, and 
let $\alpha$ be a strongly outer action of a finite group $G$ on $A$. Then 
$\gamma=\alpha\otimes\mathrm{id}$ on $A\otimes\mathcal{W}$ has 
the Rohlin property. 
\end{thm}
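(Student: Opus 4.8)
The plan is to reduce the Rohlin property to a cohomological statement and then invoke Lemma \ref{lem:cohomology}. Recall that $\gamma$ has the Rohlin property precisely when there is a partition of unity $\{p_g\}_{g\in G}$ of projections in $F(A\otimes\mathcal{W})$ with $\gamma_g(p_h)=p_{gh}$; equivalently, it suffices to produce a single projection $p_\iota$ whose $\gamma$-translates $\{\gamma_g(p_\iota)\}_{g\in G}$ are mutually orthogonal and sum to $1$. First I would build a genuine, but $\gamma$-\emph{fixed}, copy of the group pattern inside the fixed point algebra, and then twist it by a coboundary so as to convert the trivial $G$-action on it into the regular permutation action.

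Concretely, by Proposition \ref{pro:key-pro}(i) applied with $N=|G|$ there is a unital homomorphism $M_{|G|}(\mathbb{C})\to F(A\otimes\mathcal{W})^{\gamma}$; indexing its matrix units by $G$, I obtain a system $\{e_{g,h}\}_{g,h\in G}$ of $\gamma$-invariant matrix units with $\sum_{g\in G}e_{g,g}=1$. Let $\lambda(g):=\sum_{h\in G}e_{gh,h}$ be the image of the left regular representation. These are unitaries in $F(A\otimes\mathcal{W})^{\gamma}$, they satisfy $\lambda(g)e_{h,h}\lambda(g)^*=e_{gh,gh}$, and, being $\gamma$-fixed and multiplicative, they form a $\gamma$-cocycle: $\lambda(gh)=\lambda(g)\lambda(h)=\lambda(g)\gamma_g(\lambda(h))$.

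Next I would apply Lemma \ref{lem:cohomology} to the cocycle $w:=\lambda$ to obtain a unitary $v\in F(A\otimes\mathcal{W})$ with $\lambda(g)=v\gamma_g(v^*)$, and set $p_g:=v^*e_{g,g}v$. Since the $e_{g,g}$ are orthogonal projections summing to $1$, so are the $p_g$. Using $\gamma_g(v)=\lambda(g)^*v$ together with the $\gamma$-invariance of the $e_{h,h}$, one computes $\gamma_g(p_h)=v^*\lambda(g)e_{h,h}\lambda(g)^*v=v^*e_{gh,gh}v=p_{gh}$, which is exactly the Rohlin relation, and $\{p_g\}_{g\in G}$ is the sought partition of unity.

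The conceptual heart of the argument is carried entirely by the two already-established inputs: the existence of the matrix subalgebra in the fixed point algebra (Proposition \ref{pro:key-pro}, which rests on strict comparison and the factoriality results of Section \ref{sec:target}) and the cohomology vanishing (Lemma \ref{lem:cohomology}, i.e.\ Connes' $2\times 2$ matrix trick). Given these, the final assembly above is purely formal. Thus the main obstacle lies not in this last step but in confirming that the naturally occurring permutation unitaries genuinely constitute a $\gamma$-cocycle so that Lemma \ref{lem:cohomology} applies verbatim; the one point to verify there is that they lie in the fixed point algebra, which holds because they are built from the embedded $M_{|G|}(\mathbb{C})\subset F(A\otimes\mathcal{W})^{\gamma}$.
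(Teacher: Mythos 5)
Your proposal is correct and takes essentially the same route as the paper: both arguments build a $\gamma$-cocycle out of the left regular representation sitting inside a $\gamma$-fixed unital copy of $M_{|G|}(\mathbb{C})$ in the central sequence algebra, apply Lemma \ref{lem:cohomology} to trivialize it, and conjugate the diagonal projections by the resulting unitary. The only (cosmetic) difference is that the paper constructs the matrix copy and the cocycle explicitly via the UHF-stability of $\mathcal{W}$, identifying $F(A\otimes\mathcal{W})^{\gamma}$ with $F(A\otimes\mathcal{W}\otimes\bigotimes_{n}M_{|G|}(\mathbb{C}))^{\gamma\otimes\mathrm{id}}$ and using a tensor-shift sequence, whereas you invoke Proposition \ref{pro:key-pro}(i), which rests on the same UHF-stability.
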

\begin{proof} 
We identify $B(\ell^2(G))$ with $M_{|G|}(\mathbb{C})$. Also, 
we can identify $F(A\otimes\mathcal{W})^{\gamma}$ with 
$F(A\otimes \mathcal{W}\otimes 
\bigotimes_{n\in\mathbb{N}} M_{|G|}(\mathbb{C}))^{\gamma\otimes\mathrm{id}}$ 
because $\mathcal{W}$ is UHF stable. 
Let $\lambda$ be the left regular representation of $G$ on $\ell^2(G)$. 
Define a map $w$  from $G$ to 
$F(A\otimes\mathcal{W})^{\gamma}$ by 
$$
w (g) := [(h_n\otimes k_n \otimes 
\overbrace{1\otimes \cdots \otimes 1}^n \otimes \lambda(g) \otimes 1\otimes \cdots )_n ]
$$
where $\{h_n\}_{n=1}^\infty$ and $\{k_n\}_{n=1}^\infty$ are approximate units for $A$ and 
$\mathcal{W}$, respectively. 
Then $w$ is a homomorphism, and hence $w$ is a $\gamma$-cocycle in $F(A\otimes\mathcal{W})$. 
By Lemma \ref{lem:cohomology}, there exists a unitary element $v$ in 
$F(A\otimes\mathcal{W})$ such that $w(g)=v\gamma_g(v^*)$ for any $g\in G$. 
For any $g\in G$,  let $e_{g}$ be a projection onto $\mathbb{C}\delta_g$ where 
$\{\delta_h\; |\; h\in G\}$ is the canonical basis of $\ell^2(G)$, and put 
$$
p_g:=  v^*[(h_n\otimes k_n \otimes 
\overbrace{1\otimes \cdots \otimes 1}^n \otimes e_g \otimes 1\otimes \cdots )_n ]v.
$$
Then $\{p_{g}\}_{g\in G}$ is a partition of unity in $F(A\otimes\mathcal{W})$ consisting 
of projections satisfying 
$$
\gamma_{g} (p_{h}) =p_{gh}
$$
for any $g,h\in G$. Consequently, $\gamma$ has the Rohlin property. 
\end{proof}

Combining the theorem above and the classification results in \cite{CE} and \cite{EGLN}, 
we obtain the following corollary. 

\begin{cor}\label{cor:main}
Let $A$ and $B$ be simple separable nuclear C$^*$-algebras with a unique tracial state and 
no unbounded traces, and let $\alpha$ and $\beta$ be strongly outer actions of a finite group 
$G$ on $A$ and $B$, respectively. 
Then $\alpha\otimes\mathrm{id}$ on $A\otimes\mathcal{W}$ is conjugate to 
$\beta\otimes\mathrm{id}$ on $B\otimes\mathcal{W}$. 
\end{cor}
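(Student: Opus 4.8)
The plan is to combine Theorem \ref{thm:main} with the classification of Rohlin actions on $\mathcal{W}$ (Theorem \ref{thm:classification}) and the isomorphism $A\otimes\mathcal{W}\cong\mathcal{W}$ supplied by the results of \cite{EGLN} and \cite{CE}. First I would fix isomorphisms $\theta_A\colon A\otimes\mathcal{W}\to\mathcal{W}$ and $\theta_B\colon B\otimes\mathcal{W}\to\mathcal{W}$; these exist precisely because $A$ and $B$ are simple separable nuclear with a unique tracial state and no unbounded traces, so that each of $A\otimes\mathcal{W}$ and $B\otimes\mathcal{W}$ is isomorphic to $\mathcal{W}$.

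Next I would transport the two actions to $\mathcal{W}$. Define $\tilde{\alpha}_g:=\theta_A\circ(\alpha_g\otimes\mathrm{id})\circ\theta_A^{-1}$ and $\tilde{\beta}_g:=\theta_B\circ(\beta_g\otimes\mathrm{id})\circ\theta_B^{-1}$, which are actions of $G$ on $\mathcal{W}$. The key point is that the Rohlin property is a conjugacy invariant: an isomorphism induces an isomorphism of the central sequence algebras $F(\cdot)$ that intertwines the induced $G$-actions and carries a Rohlin partition of unity to a Rohlin partition of unity. Since Theorem \ref{thm:main} guarantees that $\alpha\otimes\mathrm{id}$ and $\beta\otimes\mathrm{id}$ have the Rohlin property, it follows that $\tilde{\alpha}$ and $\tilde{\beta}$ are Rohlin actions of $G$ on $\mathcal{W}$.

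Then Theorem \ref{thm:classification} applies directly: $\tilde{\alpha}$ and $\tilde{\beta}$ are conjugate, say via an automorphism $\Theta$ of $\mathcal{W}$ with $\Theta\circ\tilde{\alpha}_g=\tilde{\beta}_g\circ\Theta$ for all $g\in G$. Finally I would set $\Psi:=\theta_B^{-1}\circ\Theta\circ\theta_A\colon A\otimes\mathcal{W}\to B\otimes\mathcal{W}$ and verify, by unwinding the definitions of $\tilde{\alpha}_g$ and $\tilde{\beta}_g$ in a short chain of equalities, that $\Psi\circ(\alpha_g\otimes\mathrm{id})=\theta_B^{-1}\circ\Theta\circ\tilde{\alpha}_g\circ\theta_A=\theta_B^{-1}\circ\tilde{\beta}_g\circ\Theta\circ\theta_A=(\beta_g\otimes\mathrm{id})\circ\Psi$ for every $g$, which is exactly the required conjugacy.

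There is essentially no obstacle internal to this corollary: all of the difficulty has already been absorbed into Theorem \ref{thm:main}, whose proof via the cohomology-vanishing Lemma \ref{lem:cohomology} and Connes' $2\times2$ matrix trick is the real content. The only point worth stating explicitly is the functoriality of $F(\cdot)$ used to see that the Rohlin property passes through $\theta_A$ and $\theta_B$; this is routine, but making it explicit is what renders the transport of the Rohlin property rigorous.
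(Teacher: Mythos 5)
Your proposal is correct and follows essentially the same route as the paper: the paper likewise obtains $A\otimes\mathcal{W}\cong\mathcal{W}\cong B\otimes\mathcal{W}$ from \cite{CE} and \cite{EGLN}, and then combines Theorem \ref{thm:main} with Theorem \ref{thm:classification}, leaving implicit the transport-of-Rohlin-property and composition-of-isomorphisms bookkeeping that you spell out. Your explicit verification of the intertwining identities and the conjugacy-invariance of the Rohlin property is exactly the routine content the paper compresses into its final sentence.
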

\begin{proof}
By \cite[Theorem 6.1]{CE}, $A\otimes\mathcal{Z}$ and $B\otimes\mathcal{Z}$ have finite nuclear 
dimension. Hence \cite[Corollary 6.7]{EGLN} implies that $A\otimes\mathcal{W}$ and 
$B\otimes\mathcal{W}$ are isomorphic to $\mathcal{W}$.  
Therefore we obtain the conclusion by Theorem \ref{thm:classification} and Theorem \ref{thm:main}. 
\end{proof}

\begin{rem}
(1) If $\alpha^{\prime}$ is not a strongly outer action of a non-trivial finite group $G$ on $A$, then 
$(A\otimes\mathcal{W})\rtimes_{\alpha^{\prime}\otimes\mathrm{id}} G$ has at least two extremal 
tracial state. Hence $\alpha^{\prime}\otimes\mathrm{id}$ is not (cocycle) conjugate to the action 
in the corollary above. 
\ \\
(2) There exist uncountably many non-conjugate strongly outer actions of 
$\mathbb{Z}_2$ on $\mathcal{W}$ by \cite[Example 5.6]{Na4} and \cite[Remark 5.7]{Na4}. 
\ \\
(3) For generalizing the corollary above to amenable group actions, it seems to be important that 
we characterize $\mathcal{W}$ by using the central sequence C$^*$-algebra $F(\mathcal{W})$. 
\ \\
(4) If $\alpha$ is a strongly outer action of a finite group $G$ on a simple separable nuclear 
C$^*$-algebra $A$ with a unique tracial state and no unbounded traces, then 
$A\rtimes_{\alpha} G$ is a simple separable nuclear C$^*$-algebra with a unique tracial state and 
no unbounded traces. Hence $(A\otimes\mathcal{W})\rtimes_{\alpha\otimes\mathrm{id}}G
\cong (A\rtimes_{\alpha} G)\otimes\mathcal{W}$ is isomorphic to $\mathcal{W}$. \ \\
(5) We need the unique trace property of $A$ for $A\otimes\mathcal{W}\cong \mathcal{W}$. 
Moreover, we used this property for many arguments in Section \ref{sec:target}. 
But it seems to be possible that we generalize some results in this paper to more general 
$KK$-contractible C$^*$-algebras by suitable modifications. 
\end{rem}

\end{document}